\crefname{def}{Definition}{Definitions}
\Crefname{def}{Definition}{Definitions}
\crefname{ineq}{Ineq.}{Ineqs.}
\Crefname{ineq}{Inequality}{Inequalities}
\crefname{rec}{Recursion}{Recursions}
\Crefname{rec}{Recursion}{Recursions}
\crefname{rl}{Relation}{Relations}
\crefname{rl}{Relation}{Relations}
\crefname{fml}{Formula}{Formulas}
\crefname{fml}{Formula}{Formulas}
\crefname{rst}{Result}{Results}
\crefname{rst}{Result}{Results}
\numberwithin{equation}{section}
\numberwithin{figure}{section}
\numberwithin{table}{section}
\newtheorem{thm}{Theorem}[section]
\newtheorem{cor}[thm]{Corollary}
\newtheorem{lem}[thm]{Lemma}
\newtheorem{prop}[thm]{Proposition}
\theoremstyle{definition}
\newtheorem{dfn}[thm]{Definition}
\newtheorem{ntt}[thm]{Notation}
\newtheorem{eg}[thm]{Example}
\theoremstyle{remark}
\newtheorem*{rmk}{Remark}
\def\fl#1{\lfloor{#1}\rfloor}
\def\bgg#1{\biggl({#1}\biggr)}
\def\all#1{\qquad\textrm{for all $#1$}}
\def\tW{\tilde{W}}
\def\tR{\tilde{R}}
\def\txi{\tilde{\xi}}
\def\tD{\tilde{\Delta}}
\def\tg{\tilde{g}}
\def\ta{\tilde{a}}
\def\tb{\tilde{b}}
\def\Z{\mathbb{Z}}
\def\Q{\mathbb{Q}}
\def\R{\mathbb{R}}
\def\<{\;<\;}
\def\={\;=\;}
\def\>{\;>\;}
\def\pv{\mathrm{pv}}
\def\eqrl{\quad\iff\quad}
\newcommand{\rmand}{\quad\text{ and }\quad}
\newcommand{\oz}{$(1,0)$-sequence}
\begin{document}

\title[]{Root geometry of polynomial sequences II: \\
Type $(1,0)$}

\author[J.L. Gross, T. Mansour, T.W. Tucker, and D.G.L. Wang]{Jonathan L. Gross
}
\address{
Department of Computer Science  \\
Columbia University, New York, NY 10027, USA; \quad
email: gross@cs.columbia.edu
}
\author[]{Toufik Mansour
}
\address{
Department of Mathematics  \\
University of Haifa, 31905 Haifa, Israel;  \quad
email: tmansour@univ.haifa.ac.il}
\author[]{Thomas W. Tucker
}
\address{
Department of Mathematics  \\
Colgate University, Hamilton, NY 13346, USA; \quad
email: ttucker@colgate.edu
}
\author[]{David G.L. Wang
}
\address{
School of Mathematics and Statistics  \\
Beijing Institute of Technology, 102488 Beijing, P. R. China;  \quad
email: david.combin@gmail.com}

\keywords{Dirichlet's approximation theorem, limit point, real-rooted, recurrence, root geometry}

\bigskip

\begin{abstract}
We consider the sequence of polynomials $W_n(x)$ defined by the recursion $W_n(x)=(ax+b)W_{n-1}(x)+dW_{n-2}(x)$, with initial values $W_0(x)=1$ and $W_1(x)=t(x-r)$, where $a,b,d,t,r$ are real numbers, $a,t>0$, and $d<0$. We show that every polynomial $W_n(x)$ is distinct-real-rooted, and that the roots of the polynomial $W_n(x)$ interlace the roots of the polynomial $W_{n-1}(x)$. We find that, as $n\to\infty$, the sequence of smallest roots of the polynomials $W_n(x)$ converges decreasingly to a real number, and that the sequence of largest roots converges increasingly to a real number. Moreover, by using the Dirichlet approximation theorem, we prove that there is a number to which, for every positive integer $i\ge2$, the sequence of $i$th smallest roots of the polynomials $W_n(x)$ converges. Similarly, there is a number to which, for every positive integer $i\ge2$, the sequence of $i$th largest roots of the polynomials $W_n(x)$ converges. It turns out that these two convergence points are independent of the numbers $t$ and $r$, as well as $i$.  We derive explicit expressions for these four limit points, and we determine completely when some of these limit points coincide. 
\end{abstract}

\maketitle

\tableofcontents

\section{\large Introduction}\label{sec:intro}  

In \cite{GMTW15X}, we initiated study of the root geometry of 
a recursively defined sequence $\{W_n(x)\}_{n\ge0}$ of polynomials. 
Originally motivated by the LCGD conjecture from topological graph theory,
which says that the genus polynomial of every graph is log-concave, 
we are led to study the log-concavity of the polynomials in $\{W_n(x)\}_{n\ge0}$; 
see \cite{GF87, GMTW14X-claw, GMTW15, GRT89}. 
Extending our perspective from the arithmetic property to algebraic structures, 
and following the spirit of Gian-Carlo Rota, 
we study the distribution of the zero-sets of the polynomials $\{W_n(x)\}_{n\ge0}$. 
Such a distribution is called the {\em root geometry} of the sequence. 
General information for the root geometry of collections of polynomials 
can be found in Marden \cite{Mar85B}, Rahman and Schmeisser \cite{RS02B}, 
and Prasolov \cite{Pra10B}.

As encountered naturally in biology, economics, computer sciences, combinatorics, 
and other branches of mathematics, recurrence relations are among the most familiar objects 
that mathematicians work on. Proceeding systematically, 
we \cite{GMTW15X} introduced the concept of a {\em polynomial sequence of type $(k,l)$}, 
which satisfies a recursion of the form
\[
W_n(x) \= A(x)W_{n-1}(x)+B(x)W_{n-2}(x)
\]
for $n\ge 2$, where $A(x)$ and $B(x)$ are polynomials of degrees $k$ and $l$,  respectively, 
and where $W_0(x)$ is a constant and $W_1(x)$ a linear polynomial. 
For the first non-trivial case, namely, sequences of type $(0,1)$, 
we showed that subject to some general conditions that hold for all graph genus polynomials, 
every polynomial is distinct-real-rooted and that the zero-sets 
of the polynomials $W_{n+1}(x)$ and $W_n(x)$ are interlacing. 
We also found a sharp bound for the union of all zero-sets over~$n$.

This paper continues the study of root geometry of recursive polynomials, 
this time for the second non-trivial case, namely, those of type $(1,0)$.  
We will confirm the distinct-real-rootedness of such polynomials subject to similar general conditions, 
and we determine the best bound for the union of all zero-sets. 
By using Dirichlet's approximation theorem, 
we calculate some limit points of the union. 
Classical examples for this kind of recursive polynomials include 
Chebyshev polynomials of the first and second kinds. 

This paper is organized as follows. 
\Cref{sec:main:10} contains the root geometry of polynomial sequences of type $(1,0)$ 
as our main result, as well as some applications. 
We show a particular case for \cref{thm:10} in \cref{sec:pf1:10}, namely, \cref{thm:10:normalized}, using
which we complete the proof of \cref{thm:10} in \cref{sec:pf2:10}.

\bigskip
\section{\large Main Theorem}\label{sec:main:10} 

In this section we describe some information for the root geometry of a polynomial sequence of type $(1,0)$ 
as the main result, and we present some applications for illustration.
\medskip

\subsection{Main result}  

We begin with definitions and notation needed or the statement of \cref{thm:10}, which is our main result.
\smallskip

\begin{dfn}
A polynomial is said to be \emph{distinct-real-rooted} if all its roots are distinct and real. 
The set of all its roots is called the \emph{zero-set} of a polynomial.
\end{dfn}
\smallskip

\begin{dfn}\label{def:itl:10}
Let $X=\{x_1,x_2,\ldots,x_n\}$ and $Y=\{y_1,y_2,\ldots,y_{n-1}\}$ be ordered sets of real numbers. 
We say that  \emph{$X$ interlaces $Y$}, denoted $X\bowtie Y$, if
\[
x_1\<y_1\<x_2\<y_2\<\cdots\<x_{n-1}\<y_{n-1}\<x_n.
\]
A special case is that a singleton set $\{x\}$ interlaces the empty set. 
\end{dfn}

\begin{ntt}
Let $\{x_n\}_{n\ge 0}$ be a sequence of real numbers.
We write $x_n\searrow x$ if the sequence converges to the number~$x$ decreasingly as $n\to\infty$,
and we write $x_n\nearrow x$ if it converges to~$x$ increasingly.
\end{ntt}
\smallskip

Our main result is as follows.

\begin{thm}\label{thm:10}
Let $\{W_n(x)\}_{n\ge0}$ be the polynomial sequence defined recursively by
\begin{equation}\label[rec]{rec:10}
W_n(x) \= (ax+b)W_{n-1}(x)+d\cdotp W_{n-2}(x),
\end{equation}
with $W_0(x)=1$ and $W_1(x)=t(x-r)$, where $a,t>0$, $d<0$, and $b,r\in\R$. 
Then the polynomial~$W_n(x)$ has degree~$n$ and is distinct-real-rooted. 
Also, we define
\begin{align}
\label[def]{def:r+-:10} r^\pm &\= -{b\over a}\pm{|a-2t|\sqrt{-d}\over at},\\[3pt]
\label[def]{def:xd+-:10} x_\Delta^\pm &\= {-b\pm2\sqrt{-d}\over a}, \rmand \\[5pt]
\label[def]{def:xi+-:10} \xi^\pm &\= 
\begin{cases}
\displaystyle {t(ar-b-2rt)\pm\sqrt{t^2(ar+b)^2-4dt(a-t)}\over 2t(a-t)}, \quad \text{if $a\ne t$};\\[5pt]
\displaystyle r-{d\over a(ar+b)}, \qquad \text{if $a=t$ and $ar+b\ne0$}.
\end{cases}
\end{align}
Denote the zero-set of the polynomial $W_n(x)$ by 
$R_n=\{\xi_{n,1},\,\xi_{n,2},\,\ldots,\,\xi_{n,n}\}$ in increasing order. 
Then we have the interlacing property $R_{n+1}\bowtie R_{n}$, and the limits 
\begin{equation}\label[rst]{lim:10}
\xi_{n,j}\searrow x_\Delta^-
\rmand
\xi_{n,\,n+1-j}\nearrow x_\Delta^+,
\all{j\ge 2}.
\end{equation}
Moreover, we have the following cases of convergence results.
\begin{itemize}
\smallskip\item[(i)]
If $r\in[\,r^-,\,r^+]$ and $a\le 2t$, then $\xi_{n,1}\searrow x_\Delta^-$ and $\xi_{n,n}\nearrow x_\Delta^+$.
\smallskip\item[(ii)]
If $r\in(r^-,\,r^+)$ and $a>2t$, then $\xi_{n,1}\searrow \xi^-$ and $\xi_{n,n}\nearrow \xi^+$.
\smallskip\item[(iii)]
If $r=r^-$ and $a>2t$, or $r<r^-$, then $\xi_{n,1}\searrow \xi^-$ and $\xi_{n,n}\nearrow x_\Delta^+$. 
\smallskip\item[(iv)]
If $r=r^+$ and $a>2t$, or $r>r^+$, then $\xi_{n,1}\searrow x_\Delta^-$ and $\xi_{n,n}\nearrow \xi^+$.
\end{itemize}
\end{thm}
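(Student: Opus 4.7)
The plan is to reduce \Cref{thm:10} to a normalized recursion via an affine substitution in~$x$. Setting $z = (ax+b)/\sqrt{-d}$ and $\tW_n(z) = W_n\!\bigl((z\sqrt{-d}-b)/a\bigr)/(\sqrt{-d})^n$, the recursion~\cref{rec:10} becomes $\tW_n(z) = z\tW_{n-1}(z) - \tW_{n-2}(z)$ with $\tW_0(z) = 1$ and $\tW_1(z) = \tilde{t}(z - \tilde{r})$, where $\tilde{t} = t/a$ and $\tilde{r} = (b+ar)/\sqrt{-d}$. Under this change $x_\Delta^\pm$ maps to $z = \pm 2$, the condition $a > 2t$ becomes $\tilde{t} < 1/2$, and the thresholds $r^\pm$ correspond to $\tilde{r} = \pm|1-2\tilde{t}|/\tilde{t}$. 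Establishing a normalized version of the theorem (\cref{thm:10:normalized} in \cref{sec:pf1:10}) therefore suffices, and \Cref{thm:10} follows in \cref{sec:pf2:10} by unwinding the substitution.

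For the normalized recursion, distinct-real-rootedness, the correct degree, and the interlacing $R_{n+1}\bowtie R_n$ follow by induction on~$n$. Evaluating the recursion at a zero $\xi$ of $\tW_{n-1}$ gives $\tW_n(\xi) = -\tW_{n-2}(\xi)$; the inductive hypothesis $R_{n-1}\bowtie R_{n-2}$ forces $\tW_{n-2}$ to alternate in sign at the zeros of~$\tW_{n-1}$, which combined with the leading-coefficient behavior at $\pm\infty$ produces $n$ distinct real zeros of $\tW_n$ strictly interlacing those of $\tW_{n-1}$.

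For the convergence of the interior roots I would use the explicit solution of the linear recursion in~$n$. The characteristic polynomial is $\lambda^2 - z\lambda + 1 = 0$ with roots $\lambda^\pm(z) = (z \pm \sqrt{z^2-4})/2$. On the oscillatory interval $|z| < 2$, writing $z = 2\cos\theta$ with $\theta \in (0,\pi)$ gives $\lambda^\pm = e^{\pm i\theta}$, from which one obtains the closed form
\[
\tW_n(2\cos\theta)\sin\theta \= \tilde{t}\sin((n+1)\theta) - \tilde{t}\tilde{r}\sin(n\theta) + (\tilde{t}-1)\sin((n-1)\theta).
\]
Zeros in $(-2,2)$ are solutions of an equation of the form $\sin(n\theta + \varphi(\theta)) = 0$ for a bounded continuous phase shift $\varphi(\theta)$, so for large $n$ they are spaced at distance $\sim \pi/n$ in $\theta$. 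Hence for any fixed $j \ge 2$, the $j$-th largest $\theta$-root approaches~$\pi$, giving $\xi_{n,j} \searrow x_\Delta^-$; likewise $\xi_{n,n+1-j} \nearrow x_\Delta^+$. Making the monotonicity rigorous and uniform over $j \ge 2$ is where Dirichlet's approximation theorem is invoked, to control the possible irrationality of $\theta/\pi$.

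Finally, for the four cases (i)--(iv) I would analyze whether $\tW_n$ has a ``stray'' zero outside $[-2,2]$. In the exterior regime $|z|>2$, $\lambda^\pm$ are real with $|\lambda^+|>|\lambda^-|$, so $\tW_n(z) = C_+(z)(\lambda^+)^n + C_-(z)(\lambda^-)^n$, and for large $n$ a zero outside $[-2,2]$ exists precisely where $C_+$ changes sign. The thresholds $\tilde{r} = \pm|1-2\tilde{t}|/\tilde{t}$ are exactly those at which $C_+$ vanishes at $z = \pm 2$, and the condition $\tilde{t} < 1/2$ (i.e.\ $a > 2t$) ensures that the slope of $\tW_1$ is steep enough for such a stray zero to separate from the boundary. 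Solving $C_+(z) = 0$ explicitly then yields the closed forms for $\xi^\pm$, while the interlacing from the second step ensures any stray root is approached monotonically. I expect the third step---blending Dirichlet's approximation theorem with the oscillation estimate to handle all $j \ge 2$ uniformly and certify monotonic convergence---to be the main obstacle.
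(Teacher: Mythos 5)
Your normalization is genuinely different from the paper's and, in my view, cleaner: you push the recursion all the way to the Chebyshev form $\tW_n = z\tW_{n-1} - \tW_{n-2}$, absorbing all five original parameters into the two initial-data parameters $(\tilde t,\tilde r)$, whereas the paper's \cref{thm:10:normalized} only translates and scales to get $W_1(x)=x$ and $b\ge 0$, leaving $(a,b,d)$ in play. Your bookkeeping is correct: $x_\Delta^\pm\mapsto z=\pm 2$, $a>2t$ becomes $\tilde t<1/2$, and $r^\pm\mapsto \pm|1-2\tilde t|/\tilde t$. Your interlacing argument via $\tW_n(\xi)=-\tW_{n-2}(\xi)$ at a zero $\xi$ of $\tW_{n-1}$ is the same mechanism as the paper's \cref{lem:ctl:10} and \cref{lem:crt:itl:10}, and your trigonometric closed form on $|z|<2$ is the $\Delta<0$ case of the paper's \cref{lem:00} with $z=2\cos\theta$ made explicit. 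Your $C_\pm(z)$ are, up to normalization, the paper's $g^\mp(x)$ of \cref{def:g+-:10}, and your ``$C_+$ vanishes at $z=\pm 2$'' threshold is precisely the paper's sign test on $f(x_\Delta^\pm)$ in \cref{sgn:f:xd1:10,sgn:f:xd2:10}; you correctly identify the stray-zero mechanism behind the four cases.

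The one place you should reconsider is the role of Dirichlet. The paper needs it because its argument for $\xi_{n,j}\searrow x_\Delta^-$ (\cref{thm:xd1:10}) is by contradiction: assuming $\ell_j>x_\Delta^-$ forces $W_n(\ell_j)(-1)^{n+j}<0$ for large $n$, i.e.\ a sign that alternates with $n$; Dirichlet (via \cref{lem:NQ:10} and \cref{lem:side:10}) is used to show that the angle $n\theta_{\ell_j}$ cannot forever alternate sides of a fixed line. Your proposed route --- counting solutions of $n\theta+\varphi(\theta)=k\pi$ in $(0,\pi)$ and observing that the $j$-th largest has $\theta=\pi+O(1/n)$ because there are $n+O(1)$ level crossings --- is a direct argument; once you verify that $\varphi$ can be taken continuous on $(0,\pi)$ (note $D=\sin\theta>0$ there) and account for the $O(1)$ shift caused by stray roots at $|z|>2$, it needs no Diophantine input. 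So the concern in your final paragraph is misplaced: either commit to the direct counting argument, which is Dirichlet-free and arguably simpler than the paper's, or adopt the paper's contradiction argument in which Dirichlet enters for a precise reason (killing persistent sign alternation), not vaguely ``to control irrationality of $\theta/\pi$.''
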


Recall from~\cite{GMTW15X} that the sequence of largest roots of a polynomial sequence of type $(0,1)$ 
converges to a real number in an oscillating manner.
In contrast, for any polynomial sequence of type $(1,0)$, 
the sequence of $i$th smallest roots converges decreasingly as $n\to\infty$, 
and the sequence of $i$th largest roots converges increasingly.

\begin{rmk} The numbers $\xi^\pm$ are not defined when $a=t$ and $ar+b=0$. 
This is an instance of Case (i) of \cref{thm:10}. 
\end{rmk}

To give a proof of \cref{thm:10}, we state its normalized version as \cref{thm:10:normalized}.
The following notion of $(1,0)$-sequence of polynomials is the key object we will study;
see \cref{sec:pf1:10}. As will be seen, \cref{thm:10:normalized} implies \cref{thm:10}.

\begin{dfn}\label{def:W:10}
Let $\{W_n(x)\}_{n\ge0}$ be the polynomial sequence defined recursively by
\[
W_n(x)=(ax+b)W_{n-1}(x)+dW_{n-2}(x),
\]
with $W_0(x)=1$ and $W_1(x)=x$,
where $a>0$, $b\ge 0$ and $d<0$.
In this context, we say $\{W_n(x)\}_{n\ge0}$ is a {\em normalized $(1,0)$-sequence of polynomials},
or a {\em $(1,0)$-sequence} for short.
The following theorem concerns the particular case of \cref{thm:10} for which $t=1$, $r=0$, and $b\ge 0$. 
\end{dfn}
\smallskip

\begin{thm}\label{thm:10:normalized}
Let $\{W_n(x)\}_{n\ge0}$ be a \oz.
Then the polynomial~$W_n(x)$ has degree~$n$ and is distinct-real-rooted. 
Let 
\begin{align}
b_0&\=|a-2|\sqrt{-d} \rmand \label[def]{def:b0:10}\\[5pt]
x_g^\pm&\=
\begin{cases}
\displaystyle {-b\pm\sqrt{b^2-4d(a-1)}\over 2(a-1)},&\text{if $a\ne 1$};\\[5pt]
\displaystyle -{d\over b},&\text{if $a=1$ and $b\ne0$}.
\end{cases}\label[def]{def:xg+-:10}
\end{align}
Denote the zero-set of the polynomial~$W_n(x)$ by~$R_n=\{\xi_{n,1},\,\xi_{n,2},\,\ldots,\,\xi_{n,n}\}$ in increasing order. 
Then we have $R_{n+1}\bowtie R_{n}$ and \cref{lim:10}.
Moreover, we have the following.
\begin{itemize}
\smallskip\item[(i)]
If $a\le 2$ and $b\le b_0$, then $\xi_{n,1}\searrow x_\Delta^-$ and $\xi_{n,n}\nearrow x_\Delta^+$.
\smallskip\item[(ii)]
If $a>2$ and $b<b_0$, then $\xi_{n,1}\searrow x_g^-$ and $\xi_{n,n}\nearrow x_g^+$.
\smallskip\item[(iii)]
Otherwise, we have $\xi_{n,1}\searrow x_\Delta^-$ and $\xi_{n,n}\nearrow x_g^+$.
\end{itemize}
\end{thm}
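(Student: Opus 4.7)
My plan is to establish, in sequence: $\deg W_n = n$; distinct-real-rootedness together with $R_n\bowtie R_{n-1}$; an explicit trigonometric parameterization of the interior roots; the $j\ge 2$ limit statements; and finally the three-case analysis of the extreme roots. Comparing leading coefficients in \cref{rec:10} gives leading coefficient $a^n$, hence $\deg W_n = n$. Distinct-real-rootedness and interlacing are then proved simultaneously by induction on $n$: assuming both for indices below $n$, one evaluates $W_n(\rho) = d\,W_{n-2}(\rho)$ at each root $\rho$ of $W_{n-1}$; the inductive interlacing forces $W_{n-2}$ to take alternating signs at the $n-1$ roots of $W_{n-1}$, and since $d<0$, so does $W_n$, producing $n-2$ simple roots strictly between consecutive roots of $W_{n-1}$. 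The positive leading coefficient then contributes one more simple root in each outer region, giving a total of $n$ distinct real roots that interlace $R_{n-1}$.

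The main analytic tool is a closed form for $W_n$. The characteristic equation $z^2 - (ax+b)z - d = 0$ has roots $\alpha(x),\beta(x)$ with $\alpha+\beta = ax+b$ and $\alpha\beta = -d$, and solving $W_n = A(x)\alpha^n + B(x)\beta^n$ under the initial conditions yields explicit $A,B$. For $x\in(x_\Delta^-,x_\Delta^+)$ the discriminant $(ax+b)^2+4d$ is negative; the substitution $ax+b = 2\sqrt{-d}\cos\theta$ with $\theta\in(0,\pi)$ gives $\alpha,\beta=\sqrt{-d}\,e^{\pm i\theta}$, and a short trigonometric computation converts $W_n(x(\theta))=0$ into
\begin{equation*}
F_n(\theta) \;:=\; \sin((n+1)\theta) + (1-a)\sin((n-1)\theta) - \tfrac{b}{\sqrt{-d}}\sin(n\theta) \= 0.
\end{equation*}
Since $x(\theta)$ is strictly decreasing from $x(0)=x_\Delta^+$ to $x(\pi)=x_\Delta^-$, the interior roots of $W_n$ correspond in an order-reversing bijection to the zeros of $F_n$ in $(0,\pi)$.

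Interlacing immediately yields that $\xi_{n,j}$ is decreasing and $\xi_{n,n+1-j}$ is increasing in $n$, and boundedness gives convergence; it remains to identify the limits. For $j\ge 2$ I argue by contradiction: if $\xi_{n,j} \to x^* > x_\Delta^-$, then the corresponding parameter $\theta_{n,j} \to \theta^* < \pi$, so for large $n$ at most $j-1$ zeros of $F_n$ lie in $(\theta_{n,j},\pi)$; this contradicts the claim that on every fixed subinterval of $(0,\pi)$ abutting $\pi$ the number of zeros of $F_n$ grows without bound in $n$. Establishing this density statement is the main obstacle. The dominant term $\sin((n+1)\theta)$ has $\sim n$ crests on any such subinterval, but the bounded perturbation $(1-a)\sin((n-1)\theta) - \tfrac{b}{\sqrt{-d}}\sin(n\theta)$ may a priori mask the sign of $F_n$ at many of them. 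This is where I expect Dirichlet's approximation theorem to enter, producing for each sufficiently large $n$ a growing supply of arguments $\theta$ at which the three sines $\sin((n\pm1)\theta)$ and $\sin(n\theta)$ simultaneously take controlled approximate values, so that the sign of $F_n$ at those arguments is dictated by the dominant term; an intermediate value argument between consecutive such points of opposite dominant sign then supplies the required zeros. The dual argument yields $\xi_{n,n+1-j}\nearrow x_\Delta^+$.

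For the extreme roots $\xi_{n,1}$ and $\xi_{n,n}$ I carry out a case analysis on the signs of $W_n$ on the outer rays $(-\infty,x_\Delta^-)$ and $(x_\Delta^+,\infty)$. At $x_\Delta^\pm$ the characteristic roots coalesce to $\pm\sqrt{-d}$, and the confluent form of the solution gives $W_n(x_\Delta^\pm) = (1 + n\,C^\pm)(\pm\sqrt{-d})^n$ for an explicit constant $C^\pm$ in $a,b,d$. The hypotheses $a\le 2$ versus $a>2$, and $b\le b_0$ versus $b>b_0$, are precisely what is needed to control the sign of $1+n\,C^\pm$ for all large $n$, and hence to decide whether $W_n$ admits a root outside $[x_\Delta^-,x_\Delta^+]$ on each side. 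When such an outer root exists, the representation $W_n = A(x)\alpha^n + B(x)\beta^n$ with $|\alpha|>|\beta|>0$ forces it to converge to the zero of $A(x)$ on the outer side, which is one of $x_g^\pm$, namely a solution of $(a-1)x^2 + bx + d = 0$ (or of $bx+d=0$ when $a=1$). Collating this sign information in each of the three parameter regimes yields cases (i)--(iii).
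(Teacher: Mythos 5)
Your overall architecture is sound and largely parallels the paper's --- inductive interlacing via the identity $W_n(\rho)=d\,W_{n-2}(\rho)$ at roots $\rho$ of $W_{n-1}$, a closed form on the region $\Delta(x)<0$, and a sign analysis on the outer rays to locate $x_g^\pm$ --- and your trigonometric reparameterization, recasting the zero condition on $(x_\Delta^-,x_\Delta^+)$ as $F_n(\theta)=0$ under $ax+b=2\sqrt{-d}\cos\theta$, is a genuinely different and arguably cleaner device than the paper's polar-coordinate sign criterion (\cref{thm:sgn:d<0:10}). But there is a real gap exactly where you flag it: you assert, without proof, that the number of zeros of $F_n$ in any interval $(\pi-\delta,\pi)$ grows without bound in $n$. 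The route you propose --- using Dirichlet's approximation theorem to find, for each large $n$, many $\theta$ at which all three sines are simultaneously controlled --- is not what that theorem does (it approximates a fixed irrational by rationals, not a varying $\theta$ for a fixed $n$), and it is in any case unnecessary. Expanding $\sin((n\pm1)\theta)$ in terms of $\sin(n\theta)$ and $\cos(n\theta)$ gives
\[
F_n(\theta)
\;=\;
\Bigl((2-a)\cos\theta-\tfrac{b}{\sqrt{-d}}\Bigr)\sin(n\theta)
+a\sin\theta\,\cos(n\theta)
\;=\;
\sqrt{P(\theta)^2+Q(\theta)^2}\,\sin\bigl(n\theta+\phi(\theta)\bigr),
\]
with $Q(\theta)=a\sin\theta>0$ on $(0,\pi)$, so the phase $\phi$ is a continuous $(0,\pi)$-valued function. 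The continuous map $\theta\mapsto n\theta+\phi(\theta)$ then carries $(\pi-\delta,\pi)$ onto an interval of length at least $n\delta-\pi$, so $F_n$ has at least $\lfloor n\delta/\pi\rfloor-2$ zeros there by the intermediate value theorem; Dirichlet is never needed. Filling the gap this way would in fact be more uniform than the paper's argument, which fixes the hypothetical limit $\ell_i$, shows $\mathrm{sign}\,W_n(\ell_i)$ is eventually constant (near $x_\Delta^+$) or eventually alternating (near $x_\Delta^-$), and reaches for Dirichlet plus a separate periodicity lemma only to kill the alternating pattern.

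Two further points. The leading coefficient of $W_n$ is $a^{n-1}$, not $a^n$ (e.g.\ $W_2=ax^2+bx+d$); only positivity matters, so this is harmless. More substantively, your induction yields $n$ distinct interlacing real roots but not the containment $R_n\subset J_0$, which your final paragraph quietly relies on: to conclude that an outer root converges to a zero of the coefficient of $\alpha^n$ you must first know that outer roots are confined to a bounded region of the ray where $|\alpha|>|\beta|$ is bounded away from $1$. The paper secures this by checking $W_m(x_g^\pm)W_{m+2}(x_g^\pm)>0$ via $W_n(x_g^\pm)=(x_g^\pm)^n$ and feeding it into \cref{cor:crt:itl:10}; your proposal needs an equivalent a priori bound before the asymptotic argument for $\xi_{n,1}$ and $\xi_{n,n}$ can run.
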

\smallskip

We note that the limit points $x_\Delta^\pm$ are independent of the numbers $t$ and $r$. 

\subsection{Some examples}  

In this subsection, we present several applications of our results.

\begin{eg}\label{eg:Chebyshev}  
Let $W_n(x)$ be the polynomials defined by the recursion 
\[
W_n(x) \= 2xW_{n-1}(x)-W_{n-2}(x),
\] 
with $W_0(x)=1$ and  $W_1(x)$ linear.
By \cref{thm:10}, every polynomial $W_n(x)$ is of degree~$n$ and distinct-real-rooted. Moreover, we have
\[
\xi_{n,j}\searrow-1
\rmand
\xi_{n,\,n+1-j}\nearrow 1
\all{j\ge2}.
\]
If $W_1(x)=x$, then \cref{thm:10}~(i) gives that 
\begin{equation}\label[rst]{eg1}
\xi_{n,1}\searrow -1
\rmand
\xi_{n,n}\nearrow 1.
\end{equation}
In fact, the polynomials $W_n(x)$ are {\em Chebyshev polynomials of the first kind}, whose zero-sets are known to be
\[
R_n=\biggl\{\cos{(2j-1)\pi\over 2n}\colon j\in[n]\biggr\}.
\]
If $W_1(x)=2x$, then \cref{thm:10} (i) also yields \cref{eg1}.
In this case, the polynomials $W_n(x)$ are {\em Chebyshev polynomials of the second kind}, whose zero-sets are known to be 
\[
R_n=\biggl\{\cos{j\pi\over n+1}\colon j\in[n]\biggr\}.
\]
If $W_1(x)=x/2$, then \cref{thm:10} (ii) implies 
\[
\xi_{n,1}\searrow-2/\sqrt{3}
\rmand
\xi_{n,n}\nearrow2/\sqrt{3}.
\]
If $W_1(x)=x+1$, then \cref{thm:10}~(iii) implies that 
\[
\xi_{n,1}\searrow -\sqrt{2}
\rmand
\xi_{n,n}\nearrow 1.
\]
If $W_1(x)=x-2$, then \cref{thm:10}~(iv) implies
\[
\xi_{n,1}\searrow -1
\rmand
\xi_{n,n}\nearrow\sqrt{5}.
\]
\end{eg}
\smallskip

\begin{eg}\label{eg:A130777}  
Suppose that $W_0(x)=1$, $W_1(x)=x-1$, and 
$$W_n(x)\=xW_{n-1}(x)-W_{n-2}(x).$$
\Cref{thm:10}~(i) implies that $\xi_{n,j}\searrow -2$ and $\xi_{n,\,n+1-j}\nearrow2$ for all $j\ge1$.  
In fact, the zero-set $R_n$ of the polynomial $W_n(x)$ is known~\cite[A130777]{OEIS} to be
\[
R_n=\biggl\{-2\cos{2j\pi\over 2n+1}\colon j\in[n]\biggr\}.
\]
\end{eg}
\smallskip

\begin{eg}\label{eg:A101950}  
Suppose that $W_0(x)=1$, $W_1(x)=x+1$, and 
$$W_n(x)\=(x+1)W_{n-1}(x)-W_{n-2}(x).$$
\Cref{thm:10}~(i) implies that $\xi_{n,j}\searrow -3$ and $\xi_{n,\,n+1-j}\nearrow1$ for all $j\ge1$; 
see~\cite[A101950]{OEIS}. 
\end{eg}
\smallskip

\begin{eg}\label{eg:A104562}  
Suppose we have  $W_0(x)=1$, $W_1(x)=x-1$, and 
$$W_n(x)\=(x-1)W_{n-1}(x)-W_{n-2}(x).$$
From \Cref{thm:10} (i) we infer that $\xi_{n,j}\searrow -1$ 
and $\xi_{n,\,n+1-j}\nearrow3$ for all $j\ge1$; see~\cite[A104562]{OEIS}.
\end{eg}
\smallskip

We will prove \cref{thm:10} in the next two sections.  
In \cref{sec:pf1:10}, we show it for a particular kind of polynomial sequence 
of type $(1,0)$, in which $W_1(x)=x$ and $b\ge0$.
In \cref{sec:pf2:10}, we complete the proof of \cref{thm:10} 
by translating and scaling the roots, so as to drop these two conditions.

\bigskip
\section{\large Proof of \cref{thm:10:normalized}}\label{sec:pf1:10} 

We start by determining the degree and the leading coefficient of every polynomial in a $(1,0)$-sequence.

\begin{lem}\label{lem:deg:10}
Let $\{W_n(x)\}_{n\ge0}$ be a \oz.
Then every polynomial $W_n(x)$ has degree $n$ and leading coefficient~$a^{n-1}$. 
\end{lem}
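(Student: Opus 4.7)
The plan is to proceed by straightforward induction on $n$, exploiting the sign conditions $a>0$ and $d<0$ to rule out any cancellation in the leading term of the recursion.

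First I would check the base cases directly. For $n=1$, we have $W_1(x)=x$, which is of degree $1$ with leading coefficient $1=a^{0}$. For $n=2$, the recursion gives $W_2(x)=(ax+b)\cdot x+d=ax^2+bx+d$, which is of degree $2$ with leading coefficient $a=a^{1}$, confirming the claim in the first nontrivial case and showing how the recursion promotes the leading coefficient by a factor of $a$ at each step.

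For the inductive step, suppose $n\ge 3$ and assume that $W_{n-1}(x)$ has degree $n-1$ with leading coefficient $a^{n-2}$, and that $W_{n-2}(x)$ has degree $n-2$. Applying \cref{rec:10}, the product $(ax+b)W_{n-1}(x)$ is of degree $n$ with leading coefficient $a\cdot a^{n-2}=a^{n-1}$, while $d\cdot W_{n-2}(x)$ is of degree at most $n-2$. Since $a>0$, the leading coefficient $a^{n-1}$ is nonzero, so no cancellation occurs when these two polynomials are added, and $W_n(x)$ inherits degree $n$ and leading coefficient $a^{n-1}$.

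There is no real obstacle here: the only thing to be mindful of is anchoring the induction at $n=1$ rather than $n=0$ (since $W_0(x)=1$, whose leading coefficient is $1$, does not fit the formula $a^{n-1}$ unless one reads it with the convention $a^{-1}\cdot a=1$ implicitly), and observing that the positivity of $a$ ensures the top-degree term never vanishes. The formula for the leading coefficient will then be used in subsequent arguments about the interlacing and limiting behavior of the roots.
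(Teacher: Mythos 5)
Your proof is correct and takes essentially the same approach as the paper: the paper's one-sentence proof simply observes that the degree and leading coefficient of $W_n(x)$ are determined by the summand $(ax+b)W_{n-1}(x)$, since $d\,W_{n-2}(x)$ has strictly smaller degree, and you have merely unpacked this into an explicit induction. Your observation that the index must start at $n=1$ (since $W_0(x)=1$ does not literally match the claimed leading coefficient $a^{-1}$) is a fair point about the lemma's phrasing that the paper leaves implicit.
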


\begin{proof}
This lemma follows from consideration of the contributions of 
the summands $(ax+b) W_{n-1}(x)$ and $d\,W_{n-2}(x)$ 
to the degree and to the leading coefficient of the polynomial $W_n(x)$.
\end{proof}

The next lemma is a cornerstone for studying the root geometry 
of polynomials defined by recursions of order~$2$; see~\cite{GMTW15X}.

\begin{lem}\label{lem:00}
Let $A,B\in\mathbb{R}$ such that $A\ne0$.  
Define $W_n=AW_{n-1}+BW_{n-2}$ recursively, with $W_0=1$ and with some number $W_1$.
Writing $\Delta=A^2+4B$ and $g^\pm=(2W_1-A\pm\sqrt{\Delta})/2$,
we have
\[
W_n\=\begin{cases}
{\displaystyle \bgg{1+{n(2W_1-A)\over A}}\bgg{{A\over 2}}^n},&\textrm{ if $\Delta=0$};\\[10pt]
{\displaystyle {g^+(A+\sqrt{\Delta}\,)^n-g^-(A-\sqrt{\Delta}\,)^n
\over 2^{n}\sqrt{\Delta}}}
,&\textrm{ if $\Delta\neq0$}.
\end{cases}
\]
In particular, 
if the complex number $A+\sqrt{\Delta}$ has the polar representation $Re^{i\theta}$, 
then we have
\[
W_n=\bgg{R\over2}^n\bgg{\cos{n\theta}+{(2W_1-A)\sin{n\theta}\over \sqrt{-\Delta}}},
\qquad\textrm{if $\Delta<0$}.
\]
\end{lem}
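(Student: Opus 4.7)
\textbf{Proof plan for \cref{lem:00}.}

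The plan is to treat the recursion $W_n = AW_{n-1} + BW_{n-2}$ as a standard linear recurrence with constant coefficients and solve it via its characteristic equation $x^2 - Ax - B = 0$, whose roots are $x_\pm = (A \pm \sqrt{\Delta})/2$ with $\Delta = A^2 + 4B$. The two stated cases correspond precisely to whether this equation has a repeated root ($\Delta=0$) or two distinct roots ($\Delta \neq 0$); the third trigonometric form will then be extracted by specializing the second formula to $\Delta < 0$ using Euler's identity.

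First I would handle $\Delta = 0$. Here $x_+ = x_- = A/2$, so the general solution has the form $W_n = (c_1 + c_2 n)(A/2)^n$. Matching $W_0 = 1$ forces $c_1 = 1$, and matching the given $W_1$ forces $c_2 = (2W_1 - A)/A$, yielding the first displayed formula. (The hypothesis $A \neq 0$ is what makes this solvable.) Next, for $\Delta \neq 0$, the general solution is $W_n = c_+ x_+^n + c_- x_-^n$. Solving the $2\times 2$ system from $W_0=1$ and the given value of $W_1$, I obtain $c_\pm = \pm g^\pm/\sqrt{\Delta}$, with $g^\pm$ as defined in the statement. Substituting $x_\pm^n = (A \pm \sqrt{\Delta})^n/2^n$ and combining the two terms over the common denominator $2^n\sqrt{\Delta}$ gives the second displayed formula.

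For the trigonometric form under $\Delta<0$, I would write $\sqrt{\Delta} = i\sqrt{-\Delta}$, so that $A + \sqrt{\Delta} = Re^{i\theta}$ and its conjugate $A - \sqrt{\Delta} = Re^{-i\theta}$ have the same modulus $R$. Setting $u = (2W_1-A)/2$ and $v = \sqrt{-\Delta}/2$, we have $g^\pm = u \pm iv$, and a direct expansion yields
\[
g^+(A+\sqrt{\Delta})^n - g^-(A-\sqrt{\Delta})^n \= R^n\bigl[(u+iv)e^{in\theta}-(u-iv)e^{-in\theta}\bigr] \= 2iR^n\bigl(u\sin n\theta + v\cos n\theta\bigr).
\]
Dividing by $2^n\sqrt{\Delta} = 2^n \cdot i\sqrt{-\Delta}$, the factor $i$ cancels, and after inserting $2v/\sqrt{-\Delta} = 1$ and $2u/\sqrt{-\Delta} = (2W_1-A)/\sqrt{-\Delta}$, the claimed expression falls out.

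There is no real obstacle here, as the result is standard linear-recurrence theory; the only place where a little care is needed is bookkeeping of the $i$'s and signs in the passage from the exponential to the trigonometric form, and in verifying that the formula for $g^\pm$ coming out of the $2 \times 2$ linear system is consistent with the sign conventions in the final expression. I would double-check by plugging in $n=0$ and $n=1$ in each formula.
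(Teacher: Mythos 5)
Your proposal is correct, and it is the standard characteristic-equation argument for order-two linear recurrences with constant coefficients; the paper itself offers no proof of \cref{lem:00}, instead citing its companion paper \cite{GMTW15X}, so there is no alternative route to compare against. Your bookkeeping of $c_\pm = \pm g^\pm/\sqrt{\Delta}$ and the extraction of the trigonometric form via $\sqrt{\Delta}=i\sqrt{-\Delta}$ and $A\pm\sqrt{\Delta}=Re^{\pm i\theta}$ both check out, and you correctly flag the one place where $A\ne0$ is actually used, namely solving for $c_2$ in the repeated-root case.
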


\medskip
\subsection{The distinct-real-rootedness}\label{sec:RR}

In this subsection we prove the distinct-real-rootedness of every polynomial in a \oz.
and we derive a bound for the union of the zero-sets of these polynomials.  
Both of them will be shown by applying an interlacing criterion, which we now develop.

\begin{ntt}
For any integers $m\le n$, we denote the set $\{m,\,m+1,\ldots,\,n\}$ by $[m,n]$.  
When $m=1$, we denote the set $[1,n]$ by~$[n]$.  
All sets mentioned in this paper are ordered sets, 
whose elements are arranged in increasing order.
\end{ntt}

\begin{lem}\label{lem:ctl:10}
Let $g(x)$ be a polynomial with zero-set $Y$.  
Let $X = \{x_1,x_2,\ldots,x_{m+1}\}$ be a set such that $X\bowtie Y$.  
And let $\alpha$ and $\beta$ be numbers such that 
$$\alpha<x_1<x_2<\cdots<x_{m+1}<\beta.$$  
Then for every $i\in[m+1]$, we have 
\begin{align}
\label[ineq]{ineq:itl:m:alpha}
g(\alpha)g(x_i)(-1)^{i}&\<0,\rmand \\[3pt]
\label[ineq]{ineq:itl:m:beta}
g(x_i)g(\beta)(-1)^{m-i}&\<0.
\end{align}
\end{lem}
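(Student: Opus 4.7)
The plan is to reduce both inequalities to a sign-tracking argument on the real line. The interlacing hypothesis $X\bowtie Y$ forces $|Y|=m$; write $Y=\{y_1,\dots,y_m\}$ and adopt the convention $y_0=-\infty$, $y_{m+1}=+\infty$. Then $X\bowtie Y$ asserts exactly that $x_i\in(y_{i-1},y_i)$ for every $i\in[m+1]$, and the hypothesis $\alpha<x_1$ together with $x_{m+1}<\beta$ places $\alpha\in(y_0,y_1)$ and $\beta\in(y_m,y_{m+1})$.

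The key observation is that the zero-set of $g$ is exactly $Y$ (and, in every application of the lemma within this paper, $g$ will be one of the distinct-real-rooted polynomials $W_{n-1}$, so each $y_j$ is a simple root). Hence $g$ is nonzero with constant sign on each open interval $(y_{j-1},y_j)$ and flips sign at every $y_j$. Setting $s=\mathrm{sgn}(g(x_1))\in\{\pm1\}$, an immediate induction on $i$ gives
\[
\mathrm{sgn}\bigl(g(x_i)\bigr)\=(-1)^{i-1}s\qquad\text{for every }i\in[m+1].
\]
Since $\alpha$ shares its interval with $x_1$ and $\beta$ shares its interval with $x_{m+1}$, the same argument gives $\mathrm{sgn}(g(\alpha))=s$ and $\mathrm{sgn}(g(\beta))=(-1)^{m}s$.

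Plugging these signs into the two products yields
\[
\mathrm{sgn}\bigl(g(\alpha)g(x_i)(-1)^i\bigr)\=s\cdot(-1)^{i-1}s\cdot(-1)^i\=-s^2\<0
\]
and
\[
\mathrm{sgn}\bigl(g(x_i)g(\beta)(-1)^{m-i}\bigr)\=(-1)^{i-1}s\cdot(-1)^{m}s\cdot(-1)^{m-i}\=-s^2\<0,
\]
which are exactly \cref{ineq:itl:m:alpha} and \cref{ineq:itl:m:beta}. The one delicate point is the need for $g$ to change sign at every element of $Y$; this would fail if $g$ admitted a root of even multiplicity in $Y$, but in the intended uses $g=W_{n-1}$ is distinct-real-rooted (via \cref{lem:deg:10} and the inductive hypothesis used when this lemma is invoked), so every root is simple and the alternation is automatic. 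I do not expect a genuine obstacle beyond being careful to record this simple-root assumption when applying the lemma.
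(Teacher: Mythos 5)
Your proof is correct and follows essentially the same sign-alternation argument as the paper's, which multiplies the inequalities $g(\alpha)g(x_1)>0$ and $-g(x_{i-1})g(x_i)>0$ obtained from the intermediate value theorem. Your caveat at the end is a genuine one: as stated the lemma fails when $g$ has a root of even multiplicity (take $g(x)=(x-1)^2$, $Y=\{1\}$, $X=\{0,2\}$, $\alpha=-1$, $\beta=3$, $i=2$), and the paper's own assertion that the intermediate value theorem yields $-g(x_{i-1})g(x_i)>0$ carries the same unstated assumption that $g$ changes sign at each element of $Y$; this is harmless in context because in every invocation $g=W_m$, which has $m$ distinct real roots by the ambient induction, so all roots are simple.
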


\begin{proof}
By the premise $X\bowtie Y$, the polynomial $g(x)$ has no roots less than~$x_1$.
In particular, no root lies in the interval $(\alpha,x_1)$. 
Hence, from the intermediate value theorem, we infer that 
\begin{equation}\label[ineq]{pf:crt1:10}
g(\alpha)g(x_1)\>0.
\end{equation}
This confirms \cref{ineq:itl:m:alpha} for $i=1$.

From the premise $X\bowtie Y$, 
we also know that the polynomial $g(x)$ has exactly one root 
in the interval $(x_{i-1},\,x_i)$ for each integer $i\in[2,\,m+1]$.  
Here, the intermediate value theorem implies  
\begin{align*}
-g(x_1)g(x_2)&\>0\qquad(i=2),\\
-g(x_2)g(x_3)&\>0\qquad(i=3),\\
&\quad\vdots\\
-g(x_m)g(x_{m+1})&\>0\qquad(i=m+1).
\end{align*}
Multiplying the first $i-1$ of these inequalities yields
$g(x_1)g(x_{i})(-1)^{i-1}>0$, or equivalently, 
\begin{equation} \label[ineq]{ineq:<0}
g(x_1)g(x_{i})(-1)^{i}\<0.
\end{equation}
Multiplying \cref{ineq:<0} by \cref{pf:crt1:10} yields \cref{ineq:itl:m:alpha} for $i\in[2,\,m+1]$.  
\Cref{ineq:itl:m:beta} can be obtained similarly. 
\end{proof}

The next lemma provides a way to bound the set of roots of the polynomials in a \oz.
It serves as the induction step of the theorem that follows. 

\begin{lem}\label{lem:crt:itl:10}
Let $\{W_n(x)\}_{n\ge0}$ be a \oz. Let $m\ge0$ and $\alpha,\beta\in\R$.
Let $R_m$ denote the zero-set of the polynomial $W_m(x)$.  Suppose that
\begin{align}
\label[ineq]{cond:itl:alpha:10}W_{m} (\alpha)W_{m+2}(\alpha)&\>0 \rmand\\
\label[ineq]{cond:itl:beta:10}W_{m} (\beta)W_{m+2}(\beta)&\>0,
\end{align}
and also that  
\begin{equation}\label{prm:itl:10}
|R_{m+1}|=m+1,\qquad
R_{m+1}\subset(\alpha,\beta),\rmand
R_{m+1}\bowtie R_{m}.  
\end{equation}
Then we have 
\begin{equation}\label{rst:itl:10}
|R_{m+2}|=m+2,\qquad
R_{m+2}\subset(\alpha,\beta),
\rmand
R_{m+2}\bowtie R_{m+1}.
\end{equation}
\end{lem}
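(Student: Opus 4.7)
The plan is to use the recursion to evaluate $W_{m+2}$ at the roots of $W_{m+1}$ and then read off its sign pattern. Writing $R_{m+1}=\{\xi_1,\xi_2,\ldots,\xi_{m+1}\}$ in increasing order, \cref{rec:10} evaluated at $x=\xi_i$ collapses to
\[
W_{m+2}(\xi_i)\= d\cdotp W_m(\xi_i),
\]
because the coefficient $(ax+b)W_{m+1}(x)$ vanishes there. Since $d<0$, the sign of $W_{m+2}$ at each $\xi_i$ is determined by the sign of $W_m$ at $\xi_i$.

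To determine the sign of $W_m(\xi_i)$, I would apply \cref{lem:ctl:10} with $g=W_m$, whose zero-set is $R_m$, and with $X=R_{m+1}$. The hypothesis $R_{m+1}\bowtie R_m$ together with $R_{m+1}\subset(\alpha,\beta)$ from \cref{prm:itl:10} exactly furnishes the input of \cref{lem:ctl:10}. The lemma yields
\[
W_m(\alpha)\,W_m(\xi_i)\,(-1)^i\<0
\rmand
W_m(\xi_i)\,W_m(\beta)\,(-1)^{m-i}\<0,
\]
so $W_m$, and hence $W_{m+2}$, alternates in sign along $\xi_1,\xi_2,\ldots,\xi_{m+1}$. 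Now the sign hypotheses \cref{cond:itl:alpha:10,cond:itl:beta:10} say $W_{m+2}(\alpha)$ and $W_{m+2}(\beta)$ have the same sign as $W_m(\alpha)$ and $W_m(\beta)$ respectively. A direct sign chase (combining the two inequalities above with the factor of $d<0$ coming from the recursion) then shows that $W_{m+2}(\alpha)$ and $W_{m+2}(\xi_1)$ have opposite signs, that $W_{m+2}(\xi_i)$ and $W_{m+2}(\xi_{i+1})$ have opposite signs for each $i\in[m]$, and that $W_{m+2}(\xi_{m+1})$ and $W_{m+2}(\beta)$ have opposite signs.

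By the intermediate value theorem, $W_{m+2}$ thus has at least one root in each of the $m+2$ disjoint open intervals
\[
(\alpha,\xi_1),\ (\xi_1,\xi_2),\ \ldots,\ (\xi_m,\xi_{m+1}),\ (\xi_{m+1},\beta).
\]
But \cref{lem:deg:10} gives $\deg W_{m+2}=m+2$, so $W_{m+2}$ has at most $m+2$ roots. Hence there is exactly one root in each of these intervals, all roots are real and distinct, all lie in $(\alpha,\beta)$, and the root in $(\xi_i,\xi_{i+1})$ witnesses the interlacing $R_{m+2}\bowtie R_{m+1}$. This gives \cref{rst:itl:10}.

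The only delicate step is the sign bookkeeping in passing from the two inequalities furnished by \cref{lem:ctl:10} and the sign of $d$ to the desired strict sign alternation of $W_{m+2}$ across the $m+3$ points $\alpha,\xi_1,\ldots,\xi_{m+1},\beta$; everything else is an immediate consequence of the intermediate value theorem and the degree bound. I expect no obstacle beyond carefully tracking the parity of the indices.
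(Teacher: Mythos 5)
Your proposal is correct and follows essentially the same route as the paper: apply \cref{lem:ctl:10} to $g=W_m$ and $X=R_{m+1}$, use the recursion at the roots of $W_{m+1}$ to convert $W_m$-signs into $W_{m+2}$-signs via the factor $d<0$, deduce strict sign alternation of $W_{m+2}$ across $\alpha,\xi_1,\ldots,\xi_{m+1},\beta$, and finish with the intermediate value theorem and the degree bound from \cref{lem:deg:10}. The paper packages the "sign chase" slightly differently, recording the single inequality $W_{m+2}(\alpha)W_{m+2}(x_i)(-1)^i>0$ for all $i$ and then comparing consecutive indices, but this is only a bookkeeping variation of the alternation argument you outline.
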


\begin{proof}
Since $|R_{m+1}|=m+1$ and $R_{m+1}\subset(\alpha,\beta)$, we can write 
$$R_{m+1}=\{x_1,x_2,\ldots,x_{m+1}\},$$ 
where $\alpha<x_1<x_2<\cdots<x_{m+1}<\beta$.
Since $R_{m+1}\bowtie R_m$,
we can infer from \cref{lem:ctl:10} that
\begin{align}
\label[ineq]{ineq:alpha:ui:10}
W_m(\alpha)W_m(x_i)(-1)^i&\<0,\rmand\\
\label[ineq]{ineq:beta:ui:10} 
W_m(x_i)W_m(\beta)(-1)^{m-i}&\<0,
\end{align}
for every $i\in[m+1]$.
On the other hand, setting $x=x_i$ in \cref{rec:10}, we see that
\begin{align*}
W_{m+2}(x_{i}) &\= (ax_i+b)W_{m+1}(x_i)+d\cdotp W_{m}(x_{i}) \\
	&\= d\cdotp W_{m}(x_{i}) \qquad\text{(since $x_i\in R_{m+1}$),}
\end{align*}
or equivalently, 
\begin{equation}\label{pf:eq1}
W_m(x_{i})\=W_{m+2}(x_{i})/d.
\end{equation}

Since $d<0$, substituting \cref{pf:eq1} into \cref{ineq:alpha:ui:10}, we see that
$$W_m(\alpha)W_{m+2}(x_{i})(-1)^{i}>0.$$
Multiplying this by \cref{cond:itl:alpha:10} of the premises and canceling the square (which is positive), 
we determine that
\begin{equation} \label[ineq]{ineq:all_i}
W_{m+2}(\alpha)W_{m+2}(x_{i})(-1)^{i}\>0\all{i\in[m+1]}.
\end{equation}
\Cref{cond:itl:alpha:10} implies that $W_{m+2}(\alpha)\ne0$.
\smallskip

Now define $x_0=\alpha$. Then \cref{ineq:all_i} holds for $i=0$ trivially. 
Consequently, we can replace the index~$i$ in the above inequality by $i-1$ for each integer $i\in[m+1]$, 
which gives that
\begin{equation} \label[ineq]{ineq:m+2}
W_{m+2}(\alpha)W_{m+2}(x_{i-1})(-1)^{i-1}\>0.
\end{equation}
Multiplying \cref{ineq:all_i,ineq:m+2} produces
\[
W_{m+2}(x_{i-1})W_{m+2}(x_{i})\<0.
\]
Therefore, by the intermediate value theorem,
the polynomial $W_{m+2}(x)$ has a root in the interval $(x_{i-1},\,x_{i})$.
Let $z_{i}$ be such a root for each $i\in[m+1]$. Then we have the iterated inequality 
\begin{equation}\label[rl]{pf:ordering2:10}
\alpha\<z_1\<x_1\<z_2\<x_2\<\cdots\<z_m\<x_m\<z_{m+1}\<x_{m+1}.
\end{equation}

On the other hand, 
substituting \cref{pf:eq1} into \cref{ineq:beta:ui:10}
gives 
$$W_{m+2}(x_i)W_{m}(\beta)(-1)^{m-i}>0.$$
Multiplying this by \cref{cond:itl:beta:10} (and canceling the square), we find that
\[
W_{m+2}(x_{i})W_{m+2}(\beta)(-1)^{m-i}\>0.
\]
When $i=m+1$, this latter inequality becomes
\[
W_{m+2}(x_{m+1})W_{m+2}(\beta)\<0.
\]
Again by the intermediate value theorem,
the polynomial $W_{m+2}(x)$ has a root $z_{m+2}$ in the interval $(x_{m+1},\,\beta)$.  
Combining with \cref{pf:ordering2:10}, we obtain 
\[
\alpha\<z_1\<x_1\<z_2\<x_2\<\cdots\<z_{m+1}\<x_{m+1}\<z_{m+2}\<\beta.
\]

We now define $T=\{z_1,z_2,\ldots,z_{m+2}\}$. The ordering immediately above implies that
\[
|T|=m+2,\qquad
T\subset (\alpha,\beta),  
\rmand
T\bowtie R_{m+1}.
\]
Thus, to complete the proof, it suffices to show that $T=R_{m+2}$.  
In fact, by the choice of the numbers $z_j$, we have $T\subseteq R_{m+2}$.  
By \cref{lem:deg:10}, the polynomial $W_{m+2}(x)$ has degree $m+2$. 
Thus, the zero-set~$R_{m+2}$ has cardinality at most $m+2$. 
Since it contains the subset $T$ with cardinality $m+2$, we conclude that $R_{m+2}=T$.
\end{proof}
\smallskip

Using \cref{lem:crt:itl:10} as the induction step, we now establish a criterion 
for a bound on the union $\cup_{n\ge1} R_n$ of the zero-sets, 
which will be used in the proof of \cref{thm:RR:10}.

\begin{cor}\label{cor:crt:itl:10}
Let $\{W_n(x)\}_{n\ge0}$ be a \oz, 
and let $R_n$ denote the zero-set of the polynomial $W_n(x)$. 
Suppose that there are numbers $\alpha$ and $\beta$,  with $\alpha<0<\beta$, 
such that \cref{cond:itl:alpha:10,cond:itl:beta:10} hold for all $m\ge 0$.
Then we have the three relations $|R_n|=n$, $R_n\subset(\alpha,\beta)$, 
and $R_{n}\bowtie R_{n-1}$, for all $n\ge 1$.
\end{cor}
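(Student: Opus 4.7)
The plan is a short induction on $n$, using \cref{lem:crt:itl:10} as the inductive step and the standing hypotheses of the corollary to supply \cref{cond:itl:alpha:10,cond:itl:beta:10} at every stage.

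For the base case $n=1$, I would note that the definition of a \oz{} gives $W_0(x)=1$ and $W_1(x)=x$, whence $R_0=\emptyset$ and $R_1=\{0\}$. Since $\alpha<0<\beta$, the relations $|R_1|=1$ and $R_1\subset(\alpha,\beta)$ are immediate, and $R_1\bowtie R_0$ follows from the convention in \cref{def:itl:10} that a singleton interlaces the empty set.

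For the inductive step, I would assume the three relations $|R_n|=n$, $R_n\subset(\alpha,\beta)$, and $R_n\bowtie R_{n-1}$ hold for some $n\ge 1$. These are precisely the premise \cref{prm:itl:10} of \cref{lem:crt:itl:10} with $m=n-1$, while the standing hypotheses of the corollary furnish \cref{cond:itl:alpha:10,cond:itl:beta:10} at the same value of $m$. Invoking \cref{lem:crt:itl:10} then yields the three relations for $n+1$, closing the induction. I do not expect any real obstacle, since all the combinatorial and analytic substance has already been packaged into \cref{lem:crt:itl:10}; the only small point requiring care is the alignment of the base case with the convention that a singleton interlaces the empty set, together with the use of $\alpha<0<\beta$ to place the lone root $0$ of $W_1(x)$ strictly inside $(\alpha,\beta)$.
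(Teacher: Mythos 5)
Your proof is correct and matches the paper's own argument exactly: both establish the base case $n=1$ using $W_0(x)=1$, $W_1(x)=x$, the hypothesis $\alpha<0<\beta$, and the convention that a singleton interlaces the empty set, and both close the induction by applying \cref{lem:crt:itl:10} with $m=n-1$, feeding in the inductive hypothesis as~\eqref{prm:itl:10} and the standing hypotheses as \cref{cond:itl:alpha:10,cond:itl:beta:10}.
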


\begin{proof} 
Since $W_1(x)=x$, we have $R_1=\{0\}$. Thus the inequality $\alpha<0<\beta$ 
is equivalent to the relation $R_1\subset (\alpha,\beta)$.
Since $W_0(x)=1$, we have $R_0=\emptyset$. 
As per \cref{def:itl:10}, the relation $R_1\bowtie R_0$ holds trivially. 
This proves the desired relations for $n=1$.

Now suppose that they are true for some index $n=m+1$, where $m\ge 0$. 
By induction, all the three relations in \eqref{prm:itl:10} hold true.
Since \cref{cond:itl:alpha:10,cond:itl:beta:10} hold by the premise, 
by \cref{lem:crt:itl:10}, we obtain the three relations in \eqref{rst:itl:10}.
In other words, the desired relations hold for the index $n=m+2$. This completes the proof.
\end{proof}

In order to establish the real-rootedness of every polynomial in a $(1,0)$-sequence,
we will construct two real numbers~$\alpha$ and~$\beta$ satisfying the premises of  \cref{cor:crt:itl:10}.
Inspired by \cref{lem:00}, we employ the following notations.

\begin{ntt}\label{ntt:10}
Let $\{W_n(x)\}_{n\ge0}$ be a \oz. We define
\begin{align}
A(x)&\=ax+b,\notag\\[4pt]
\label[def]{def:Delta:10} \Delta(x)&\=A^2(x)+4d,\\[4pt]
\label[def]{def:f:10} f(x)&\=2W_1(x)-A(x)\=(2-a)x-b,\\[4pt]
\label[def]{def:g+-:10} g^\pm(x)&\=\bigl(f(x)\pm\sqrt{\Delta(x)}\,\bigr)/2,\rmand 
\\[4pt]
\label[def]{def:g:10}
g(x)&\=g^-(x)g^+(x)\=(1-a)x^2-bx-d.
\end{align}
\end{ntt}

The roots $x_\Delta^\pm$ of the function $\Delta(x)$ are given in \cref{def:xd+-:10}, 
and the roots $x_g^\pm$ of the function $g(x)$ are given in  \cref{def:xg+-:10}. 
\Cref{lem:xd:10} collects some information for the numbers $x_\Delta^\pm$.

\begin{lem}\label{lem:xd:10}
Let $\{W_n(x)\}_{n\ge0}$ be a \oz. Let $n\ge 0$. Then we have
\begin{align}
\label{A:xd:10} A(x_\Delta^\pm) &\= ax_\Delta^\pm+b\=\pm2\sqrt{-d},\\[7pt]
\label{f:xd1:10} f(x_\Delta^-) &\= {2\over a}\Bigl((a-2)\sqrt{-d}-b\Bigr),\\[4pt]
\label{f:xd2:10} f(x_\Delta^+) &\= {2\over a}\Bigl((2-a)\sqrt{-d}-b\Bigr),  \rmand  \\[4pt]
\label{W:xd} W_n(x_\Delta^\pm)&\= 
\bgg{1+n\cdotp{f(x_\Delta^\pm)\over A(x_\Delta^\pm)}}\bgg{{A(x_\Delta^\pm)\over 2}}^n.
\end{align}
Consequently, we have the following.
\begin{align}
\label[ineq]{sgn:A:xd:10} &A(x_\Delta^-)\<0\<A(x_\Delta^+),\\[4pt]
\label[rl]{sgn:f:xd1:10} &f(x_\Delta^-)>0 \eqrl b<(a-2)\sqrt{-d}, \rmand \\[4pt]
\label[rl]{sgn:f:xd2:10} &f(x_\Delta^+)<0 \eqrl b>(2-a)\sqrt{-d}.
\end{align}
We also have the implication
\begin{equation}\label[rl]{imp:10}
f(x_\Delta^-)>0
\quad\Longrightarrow\quad
a>2
\quad\Longrightarrow\quad
f(x_\Delta^+)<0.
\end{equation}
\end{lem}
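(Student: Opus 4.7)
The plan is to dispatch the lemma as a chain of direct calculations, with the only non-elementary input being \cref{lem:00}. I would verify the equations in the order stated and then read off the sign consequences.

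First I would verify \cref{A:xd:10} by substituting the formula $x_\Delta^\pm=(-b\pm2\sqrt{-d})/a$ from \cref{def:xd+-:10} directly into $A(x)=ax+b$: the $a$'s cancel and one obtains $A(x_\Delta^\pm)=\pm2\sqrt{-d}$. Since $d<0$, this is nonzero and has the sign claimed in \cref{sgn:A:xd:10}. Next, \cref{f:xd1:10,f:xd2:10} are routine algebra: substitute the same expressions for $x_\Delta^\pm$ into $f(x)=(2-a)x-b$ from \cref{def:f:10}, collect terms, and factor out $2/a$.

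For \cref{W:xd} I would invoke \cref{lem:00} pointwise at $x=x_\Delta^\pm$. Because $x_\Delta^\pm$ are by construction the roots of $\Delta(x)$ (see \cref{def:Delta:10}), the discriminant vanishes at these points, so the $\Delta=0$ branch of \cref{lem:00} applies with $A=A(x_\Delta^\pm)$, $B=d$, and $W_1=x_\Delta^\pm$. Rewriting $2W_1-A$ as $f(x_\Delta^\pm)$ gives exactly the claimed closed form. The formula is well-defined since $A(x_\Delta^\pm)\ne0$ by the previous step, which is precisely the hypothesis $A\ne0$ required by \cref{lem:00}.

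The equivalences \cref{sgn:f:xd1:10,sgn:f:xd2:10} are then immediate from the explicit formulas \cref{f:xd1:10,f:xd2:10}, since the factor $2/a$ is positive. Finally, the implication chain \cref{imp:10} is the only spot where a standing hypothesis enters: from \cref{def:W:10} we have $b\ge0$. If $f(x_\Delta^-)>0$, then \cref{sgn:f:xd1:10} gives $b<(a-2)\sqrt{-d}$; combined with $b\ge0$ and $\sqrt{-d}>0$ this forces $a-2>0$. Once $a>2$ is known, the quantity $(2-a)\sqrt{-d}$ is strictly negative, hence strictly less than $b\ge0$, so \cref{sgn:f:xd2:10} delivers $f(x_\Delta^+)<0$.

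There is no real conceptual obstacle: the lemma is a bookkeeping step packaging identities and sign information for later use. The only care points are to check the $A\ne0$ hypothesis before invoking \cref{lem:00}, and to remember that $b\ge0$ is built into the definition of a \oz, which is what makes the implication in \cref{imp:10} function.
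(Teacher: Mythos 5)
Your proposal is correct and follows essentially the same route as the paper: direct substitution of $x_\Delta^\pm$ into $A$ and $f$, applying the $\Delta=0$ branch of \cref{lem:00} with $2W_1-A=f$, and then reading off the sign equivalences to chain into \cref{imp:10} using $b\ge 0$. The one small improvement over the paper's terse exposition is that you explicitly verify $A(x_\Delta^\pm)\ne 0$ before invoking \cref{lem:00}, which is a hypothesis the paper leaves implicit.
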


\begin{proof}
From \cref{ntt:10}, it is straightforward to compute \crefrange{A:xd:10}{f:xd2:10}.
Since $\Delta(x_\Delta^\pm)=0$, \Cref{lem:00} implies \cref{W:xd}.
\Cref{sgn:A:xd:10,sgn:f:xd1:10,sgn:f:xd2:10} follow from 
\cref{A:xd:10,,f:xd1:10,,f:xd2:10}, respectively.
From \cref{sgn:f:xd1:10}, we see that the inequality $f(x_\Delta^-)>0$ implies that $a>2$,
and consequently, $b\ge0>(2-a)\sqrt{-d}$. Then, from \cref{sgn:f:xd2:10}, we infer \cref{imp:10}.
\end{proof}

Our direction is toward showing in \cref{sec:xd:10} that 
the numbers $x_\Delta^\pm$ are limit points of the union $\cup_{n\ge1} R_n$.
As will be seen, \cref{imp:10} plays a crucial role in splitting cases in the statement of our main theorem, 
\cref{thm:10}. In \cref{lem:xg:10} and \cref{lem:J0:10} we are collecting some information 
regarding the numbers $x_g^\pm$ that we defined in the statement of \cref{thm:10:normalized}.
\smallskip

\begin{lem}\label{lem:xg:10}
Let $\{W_n(x)\}_{n\ge0}$ be a \oz, and suppose that the numbers $x_g^-$ and $x_g^+$ are both real.  
Then we have $W_n(x_g^\pm)=(x_g^\pm)^n$. 
Moreover, we have the following.
\begin{itemize}
\smallskip\item[(i)]
If $a\le 1$, then $0<x_g^+\le x_g^-$, and $W_n(x_g^\pm)>0$.
\smallskip\item[(ii)]
If $a>1$, then $x_g^-<0<x_g^+$, $W_n(x_g^-)(-1)^n>0$, and $W_n(x_g^+)>0$.
\end{itemize}
\end{lem}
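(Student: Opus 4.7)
The plan is to first establish the key identity $W_n(x_g^\pm)=(x_g^\pm)^n$ by a direct induction, and then to read off all sign information from an elementary case analysis of the quadratic $g(x)=(1-a)x^2-bx-d$ whose zero-set (when real) is $\{x_g^-,x_g^+\}$ by \cref{def:g:10}.

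For the identity, I would induct on $n$. The base cases $n=0,1$ are the initial conditions of the \oz, namely $W_0=1=(x_g^\pm)^0$ and $W_1(x)=x$. Assuming the claim for $n-1$ and $n-2$ with $n\ge 2$, substituting $x=x_g^\pm$ into \cref{rec:10} yields
\[
W_n(x_g^\pm)\=(ax_g^\pm+b)(x_g^\pm)^{n-1}+d(x_g^\pm)^{n-2}\=a(x_g^\pm)^n+b(x_g^\pm)^{n-1}+d(x_g^\pm)^{n-2},
\]
so the target $W_n(x_g^\pm)=(x_g^\pm)^n$ is equivalent to $(a-1)(x_g^\pm)^2+b\,x_g^\pm+d=0$, i.e.\ to $-g(x_g^\pm)=0$, which holds by the definition of $x_g^\pm$. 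A small point to record at the outset is $x_g^\pm\ne 0$, which is forced by $g(0)=-d>0$ (so one can freely divide by powers of $x_g^\pm$ if one prefers a different normalization of the inductive step).

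For the sign claims I would analyze $g$ by cases, using $g(0)=-d>0$ throughout. In case (i) with $a<1$, the parabola $g$ opens upward; the (real) roots satisfy the Vieta relations $x_g^+\,x_g^-=-d/(1-a)>0$ and $x_g^++x_g^-=b/(1-a)$, and since $b=0$ would make the discriminant $b^2-4d(a-1)$ negative (excluded by the hypothesis), we have $b>0$ and hence a positive sum. Thus both roots are positive; and since the denominator $2(a-1)$ in \cref{def:xg+-:10} is negative, the $+$ sign in the numerator produces the smaller root, giving $0<x_g^+<x_g^-$. If $a=1$, then $b$ must be nonzero for $x_g^\pm$ to be defined, and $x_g^+=x_g^-=-d/b>0$. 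In case (ii), $a>1$, so $g$ opens downward and $g(0)>0$ forces one root of each sign; now $2(a-1)>0$, so \cref{def:xg+-:10} directly yields $x_g^-<0<x_g^+$.

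The sign assertions for $W_n$ then drop out of $W_n(x_g^\pm)=(x_g^\pm)^n$: in case (i) both bases are positive; in case (ii), $W_n(x_g^+)=(x_g^+)^n>0$ and $W_n(x_g^-)(-1)^n=(-x_g^-)^n>0$. I do not expect a real obstacle in this lemma — the only mild subtlety is keeping track of the sign of $2(a-1)$ in \cref{def:xg+-:10} when determining which root is which in case (i), and remembering that the hypothesis ``$x_g^\pm$ both real'' silently rules out $a<1,\,b=0$ and $a=1,\,b=0$.
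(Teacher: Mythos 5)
Your proof is correct, and the argument for the identity $W_n(x_g^\pm)=(x_g^\pm)^n$ takes a genuinely different and cleaner route than the paper's. The paper establishes the identity by invoking the closed-form solution of the recurrence (Lemma~\ref{lem:00}), which forces a case split on whether $\Delta(x_g^\pm)=0$ or not and a somewhat fiddly evaluation of $g^\pm(x_g^\pm)$ and $A(x_g^\pm)\pm\sqrt{\Delta(x_g^\pm)}$. Your two-step induction bypasses all of that: the inductive step collapses immediately to $g(x_g^\pm)=0$, with no dependence on the sign of $\Delta$, and the observation $g(0)=-d>0$ disposes of the degenerate $x_g^\pm=0$ possibility in one line. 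For the sign claims, both you and the paper are ultimately reading signs off the explicit quadratic formula in \cref{def:xg+-:10}; you route a bit more through Vieta's relations, which is a matter of taste rather than a different argument. Two small cosmetic points: in case~(i) with $a<1$ you wrote $0<x_g^+<x_g^-$ strictly, whereas the discriminant can vanish, so you should keep the $\le$ of the statement (your computation $x_g^- - x_g^+ = \sqrt{\Delta_g}/(1-a)\ge 0$ already gives this); and when deducing $b>0$ in that case it is worth saying explicitly that $b\ge 0$ is part of the normalized \oz{} hypothesis, since that is what turns ``$b\ne 0$'' into ``$b>0$''.
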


\begin{proof}
See \cref{apd:xg:10}.
\end{proof}
\smallskip

We will show in \cref{sec:xg:10} that, in some cases, the numbers $x_g^\pm$ are also limit points 
of the union $\cup_{n\ge1} R_n$.  In order to give a bound for the union $\cup_{n\ge1} R_n$, we need to be clear about the ordering among the numbers  $x_\Delta^\pm$ and $x_g^\pm$.  \Cref{lem:J0:10} collects necessary information for this purpose.
\smallskip

\begin{lem}\label{lem:J0:10}
Suppose that $f(x_\Delta^+)<0$, where the function $f(x)$ is given by \cref{def:f:10}.
Then the numbers~$x_g^\pm$ are well-defined and real. 
Moreover, we have $x_g^+>x_\Delta^+$.  If additionally we have $f(x_\Delta^-)>0$, then $x_g^-<x_\Delta^-$.
\end{lem}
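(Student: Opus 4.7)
My plan is to hinge everything on the identity $g(x) = (f^2(x)-\Delta(x))/4$, which follows by multiplying out $g(x) = g^-(x)g^+(x)$ with $g^\pm(x) = (f(x)\pm\sqrt{\Delta(x)})/2$. At $x = x_\Delta^\pm$ one has $\Delta(x_\Delta^\pm) = 0$, so $g(x_\Delta^\pm) = f(x_\Delta^\pm)^2/4$, and this is strictly positive whenever $f(x_\Delta^\pm) \ne 0$. Under the standing hypothesis $f(x_\Delta^+)<0$ this gives $g(x_\Delta^+)>0$, and under the extra hypothesis $f(x_\Delta^-)>0$ it also gives $g(x_\Delta^-)>0$.

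First I would verify that $x_g^\pm$ are real. When $a>1$, the discriminant $b^2-4d(a-1)$ is manifestly positive, since $-d>0$. When $a=1$, \cref{sgn:f:xd2:10} turns the hypothesis into $b>\sqrt{-d}>0$, so $x_g^\pm=-d/b$ is well-defined. When $a<1$, the same relation furnishes $b^2>(2-a)^2(-d)$, and the elementary identity $(2-a)^2-4(1-a)=a^2>0$ then yields $b^2-4d(a-1)>0$, so $x_g^\pm$ are real (and distinct) in this case too.

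Next I would compare $x_\Delta^+$ with $x_g^+$ using the shape of the parabola $g$, whose leading coefficient is $1-a$. For $a>1$, $g$ is concave down, so $g(x_\Delta^+)>0$ places $x_\Delta^+$ in the open interval $(x_g^-,x_g^+)$, and in particular $x_\Delta^+<x_g^+$. For $a=1$, the linear function $g(x)=-bx-d$ has negative slope (since $b>0$), so $g(x_\Delta^+)>0$ directly gives $x_\Delta^+<-d/b=x_g^+$. The main obstacle is the case $a<1$, where $g$ opens upward and the region $g>0$ splits into two disjoint rays, so positivity of $g(x_\Delta^+)$ alone does not locate $x_\Delta^+$. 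Here I would show that $x_\Delta^+$ lies strictly to the left of the vertex $x_v=b/(2(1-a))$ of $g$: after clearing positive denominators, the inequality $x_\Delta^+<x_v$ unwinds to $(2-a)b>4(1-a)\sqrt{-d}$, which follows from the hypothesis $b>(2-a)\sqrt{-d}$ combined with $(2-a)^2>4(1-a)$. Since $g(x_\Delta^+)>0$ and $x_\Delta^+$ sits strictly left of the vertex of the upward parabola $g$, it must lie strictly to the left of the smaller root of $g$, which by \cref{lem:xg:10}(i) is $x_g^+$.

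Finally, for the second assertion the extra hypothesis $f(x_\Delta^-)>0$ triggers the implication \cref{imp:10}, forcing $a>2>1$. Then $g$ is concave down, and $g(x_\Delta^-)>0$ places $x_\Delta^-$ strictly inside the interval $(x_g^-,x_g^+)$, from which $x_g^-<x_\Delta^-$ is immediate.
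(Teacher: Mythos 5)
Your proof is correct, and it takes a genuinely different route from the paper's. The paper's argument (in \cref{apd:J0:10}) also begins from the same algebraic fact --- there phrased as $\Delta(x_g^\pm)=f^2(x_g^\pm)\ge0$, which is the same identity you use read the other way --- but it then reasons about where $x_g^\pm$ sit relative to the zero set of $\Delta$, yielding the coarse alternative ``$x_g^+\le x_\Delta^-$ or $x_g^+\ge x_\Delta^+$'', and resolves the ambiguity by a fairly opaque sign analysis of the auxiliary quantities $X=(a-2)b$ and $Y=a\sqrt{\Delta_g}$. You instead read the identity as $g(x_\Delta^\pm)=f(x_\Delta^\pm)^2/4>0$ and reason about where $x_\Delta^\pm$ sit relative to the zero set of $g$, which lets you lean directly on the concavity of $g$. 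For $a>1$ (concave down) this closes instantly: a point of positivity lies strictly between the two roots. For $a=1$ (linear, slope $-b<0$) it is one line. The only place where you have to work is $a<1$ (concave up), where positivity alone leaves two rays, and your vertex computation $x_\Delta^+<b/(2(1-a))$ --- which reduces, via $(2-a)^2-4(1-a)=a^2>0$, to the standing hypothesis $b>(2-a)\sqrt{-d}$ --- correctly resolves it. The second assertion is also handled cleanly: $f(x_\Delta^-)>0$ forces $a>2$ by \cref{imp:10}, putting you in the concave-down case, and the conclusion is immediate. Overall your version is shorter and more transparent than the paper's $X$,$Y$ computation; the extra effort in the $a<1$ branch is the fair price for that transparency, and mirrors a similar split in the paper's argument.
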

\begin{proof}
See \cref{apd:J0:10}.
\end{proof}
\smallskip

We are now ready to define a suitable interval $(\alpha,\beta)$ to be used in applying \cref{cor:crt:itl:10}.
\smallskip

\begin{ntt}\label{ntt:J0:10}
Let $J_\Delta=(x_\Delta^-,\,x_\Delta^+)$.
With the aid of \cref{imp:10}, we can define the interval 
\begin{equation}\label[def]{def:J0:10}
J_0=\begin{cases}
(x_\Delta^-,\,x_\Delta^+),&\text{if $f(x_\Delta^+)\ge0$};\\[5pt]
(x_g^-,\,x_g^+),&\text{if $f(x_\Delta^-)>0$};\\[5pt]
(x_\Delta^-,\,x_g^+),
&\text{otherwise}.
\end{cases}
\end{equation}
Note that $x_\Delta^-<x_\Delta^+$.
We see from \cref{lem:J0:10} that $J_0$ is a well-defined interval, 
and that it contains $J_\Delta$ as a non-empty subinterval, namely,
\begin{equation}\label[rl]{JJ0:10}
J_\Delta\subseteq J_0.
\end{equation}
\end{ntt}

This has prepared us to establish the distinct-real-rootedness of the polynomials $W_n(x)$.

\begin{thm}\label{thm:RR:10}
Let $\{W_n(x)\}_{n\ge0}$ be a \oz. Then each of the polynomials  $W_n(x)$ is distinct-real-rooted.  
Moreover, we have $R_n\subset J_0$ and $R_{n+1}\bowtie R_{n}$.
\end{thm}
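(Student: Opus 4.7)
The plan is to derive the theorem from \cref{cor:crt:itl:10} by taking $(\alpha,\beta)$ to be the two endpoints of the interval $J_0$ given in \cref{def:J0:10}, separately for each of its three cases. The hypothesis $\alpha<0<\beta$ is immediate in each case: $x_\Delta^-<0$ always (since $a>0$, $b\ge 0$, and $-d>0$); $x_\Delta^+>0$ in Case~1 because the defining inequality $f(x_\Delta^+)\ge 0$ reads $b\le (2-a)\sqrt{-d}$, forcing $b<2\sqrt{-d}$; and $x_g^+>0$ in Cases~2 and~3 by \cref{lem:xg:10}. The main work is to verify the sign conditions \cref{cond:itl:alpha:10,cond:itl:beta:10} at each endpoint for every $m\ge 0$.

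For endpoints of the form $x_\Delta^\pm$, formula \cref{W:xd} yields
\[
W_m(x_\Delta^\pm)\,W_{m+2}(x_\Delta^\pm)
 \= (1+mc_\pm)\bigl(1+(m+2)c_\pm\bigr)\bigl(A(x_\Delta^\pm)/2\bigr)^{2m+2},
\]
where $c_\pm:=f(x_\Delta^\pm)/A(x_\Delta^\pm)$, so it suffices to check $c_\pm\ge 0$. In Case~1 the hypothesis $f(x_\Delta^+)\ge 0$ together with the contrapositive of \cref{imp:10} (i.e., $f(x_\Delta^-)\le 0$), combined with the signs $A(x_\Delta^-)<0<A(x_\Delta^+)$ from \cref{sgn:A:xd:10}, yield $c_\pm\ge 0$. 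In Case~3 only the left endpoint is of this form, and the defining ``otherwise'' forces $f(x_\Delta^-)\le 0$, so again $c_-\ge 0$. For endpoints of the form $x_g^\pm$, \cref{lem:xg:10} provides the clean identity $W_n(x_g^\pm)=(x_g^\pm)^n$, reducing the sign condition to $x_g^\pm\ne 0$; the strict inequalities $x_g^-<0<x_g^+$ in Case~2 (where $a>2$ by \cref{imp:10}) and $x_g^+>0$ in Case~3 follow from parts~(i) and~(ii) of that lemma.

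With the premises of \cref{cor:crt:itl:10} verified in every case, its conclusion $|R_n|=n$, $R_n\subset J_0$, and $R_{n+1}\bowtie R_n$ follows, and combined with $\deg W_n=n$ from \cref{lem:deg:10} this promotes $|R_n|=n$ to the distinct-real-rootedness of $W_n(x)$. The main obstacle I anticipate is recognizing why $J_0$ must sometimes shrink past $x_\Delta^\pm$ to $x_g^\pm$: the naive choice $\alpha=x_\Delta^-$, $\beta=x_\Delta^+$ fails as soon as some $c_\pm$ is negative, because $(1+mc_\pm)(1+(m+2)c_\pm)$ then eventually changes sign in $m$; the chain of implications in \cref{imp:10}, together with \cref{lem:J0:10} ensuring that $x_g^\pm$ are real and correctly ordered, is what justifies replacing the offending endpoint while keeping $J_0$ a non-empty interval containing~$0$.
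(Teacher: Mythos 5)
Your proposal is correct and matches the paper's proof essentially step for step: apply \cref{cor:crt:itl:10} with $(\alpha,\beta)=J_0$, verify $\alpha<0<\beta$ case by case, and establish the sign conditions via \cref{W:xd} (reducing to $f(x_\Delta^\pm)/A(x_\Delta^\pm)\ge 0$) at $x_\Delta^\pm$-endpoints and via $W_n(x_g^\pm)=(x_g^\pm)^n\ne 0$ from \cref{lem:xg:10} at $x_g^\pm$-endpoints, using \cref{imp:10} to sort the cases and \cref{lem:J0:10} for the realness and ordering of $x_g^\pm$.
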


\begin{proof}
Let $(\alpha,\beta)$ denote the interval $J_0$.  By \cref{def:J0:10}, we have
\[
\alpha=\begin{cases}
x_g^-,&\text{if $f(x_\Delta^-)>0$},\\[5pt]
x_\Delta^-,&\text{if $f(x_\Delta^-)\le 0$};
\end{cases}
\rmand
\beta=\begin{cases}
x_g^+,&\text{if $f(x_\Delta^+)<0$},\\[5pt]
x_\Delta^+,&\text{if $f(x_\Delta^+)\ge0$}.
\end{cases}
\]
To apply \cref{cor:crt:itl:10}, 
we will first show that $\alpha<0$ and $\beta>0$ 
and then that \cref{cond:itl:alpha:10,cond:itl:beta:10} hold for all $m\ge 0$.

First, we show that $\alpha<0$. 
If $f(x_\Delta^-)>0$,  then we have $a>2$ from \cref{imp:10}, 
which implies that $x_g^-<0$, by \cref{lem:xg:10}~(ii). 
Alternatively, if $f(x_\Delta^-)\le0$, 
then it follows from \cref{def:xd+-:10} of the number~$x_\Delta^-$ 
as $(-b-2\sqrt{-d})/a$ that $x_\Delta^-<0$, since $a>0$ and $b\ge 0$.  This proves $\alpha<0$.

Second, we show that $\beta>0$.
If $f(x_\Delta^+)<0$,  then we have $x_g^+>0$, by \cref{lem:xg:10}. 
Alternatively, if $f(x_\Delta^+)\ge 0$, 
we deduce from \cref{sgn:f:xd2:10} that $b\le(2-a)\sqrt{-d}<2\sqrt{-d}$. 
Thus, from \cref{def:xd+-:10} of the number~$x_\Delta^+$ as $(-b+2\sqrt{-d})/a$, 
if follows that $x_\Delta^+>0$.  This proves $\beta>0$.

Now, for \cref{cond:itl:alpha:10,cond:itl:beta:10}, we need to show that
\begin{equation}\label[ineq]{pf15}
W_m(x)W_{m+2}(x)>0,\qquad\text{for all $m\ge0$, and for both $x\in\{\alpha,\beta\}$}.
\end{equation}
Since $\{\alpha,\beta\}\subseteq\{x_g^\pm,\,x_\Delta^\pm\}$, we have $x\in\{x_g^\pm,\,x_\Delta^\pm\}$.

\smallskip

When $x\in\{x_g^\pm\}$,  we have from \cref{lem:xg:10} that $W_m(x)W_{m+2}(x)=x^{2m+2}\ge0$.  
\Cref{lem:xg:10} also tells us $x_g^\pm\ne0$. Thus we have $W_m(x_g^\pm) W_{m+2}(x_g^\pm)>0$.

\smallskip

When $x\in\{x_\Delta^\pm\}$, \cref{W:xd} implies that
\[
W_m(x)W_{m+2}(x)
\=\bgg{1+m\cdotp{f(x)\over A(x)}}
\bgg{1+(m+2)\cdotp{f(x)\over A(x)}}
\bgg{{A(x)\over 2}}^{2m+2}.
\]
To show \cref{pf15}, it suffices (since $m\ge0$) to show that $f(x)/A(x)\ge0$. 
For $x=x_\Delta^-$, we have $x=\alpha$, with the additional condition $f(x_\Delta^-)\le 0$.  
From \cref{sgn:A:xd:10}, we infer that $f(x_\Delta^-)/A(x_\Delta^-)\ge0$.  
If $x=x_\Delta^+$, we have $x=\beta$, with the additional condition $f(x_\Delta^+)\ge 0$.  
Again from \cref{sgn:A:xd:10}, we infer that $f(x_\Delta^+)/A(x_\Delta^+)\ge0$.  
\end{proof}

We see from \cref{thm:RR:10} that the union $\cup_{n\ge1}R_n$ is contained within the interval~$J_0$.  
We will show that the interval~$J_0$ is sharp as a bound of that union,
by establishing in the next three subsections  that each of its endpoints is a limit point of that union; 
see \cref{thm:xg1:10}, \cref{thm:xg2:10}, \cref{thm:xd2:10}, and \cref{thm:xd1:10}.
Before this, we highlight the important role of the subinterval~$J_\Delta$,
by proving that it contains almost all members of the union $\cup_{n\ge1}R_n$.

\medskip\medskip
\subsection{The quasi-bound $J_\Delta$}  

By \cref{thm:RR:10}, each of the polynomials $W_n(x)$ is distinct-real-rooted.  
In order to describe the position of the smallest and the largest roots clearly, 
we introduce the following notation.
\smallskip

\begin{dfn}
A set $\{x_1,x_2,\ldots,x_n\}$ of real numbers is said to be {\em ordered} or {\em in increasing order}, if $x_1< x_2< \cdots <x_n$.
\end{dfn}
\smallskip

The following theorem shows that the subinterval $J_\Delta$ contains 
all except at most two roots of each polynomial $W_n(x)$.
\smallskip

\begin{thm}\label{thm:bound:10}
Let $\{W_n(x)\}_{n\ge0}$ be a \oz, in which each polynomial $W_n(x)$ 
has the ordered zero-set $R_n=\{\xi_{n,1},\,\xi_{n,2},\,\ldots,\,\xi_{n,n}\}$.

\smallskip\noindent(i) When $f(x_\Delta^-)>0$, we have the following.
\begin{itemize}
\medskip\item
If $n<-{A(x_\Delta^-)/f(x_\Delta^-)}$, then $\xi_{n,1}>x_\Delta^-$.
\medskip\item
If $n=-{A(x_\Delta^-)/f(x_\Delta^-)}$, then $\xi_{n,1}=x_\Delta^-$.
\medskip\item
If $n>-{A(x_\Delta^-)/f(x_\Delta^-)}$, then $\xi_{n,1}<x_\Delta^-$. In this case, we have $x_\Delta^-<\xi_{n,2}$ for all $n\ge 2$.
\end{itemize}

\smallskip\noindent (ii) When $f(x_\Delta^+)<0$, we have the following.
\begin{itemize}
\medskip\item
If $n<-{A(x_\Delta^+)/f(x_\Delta^+)}$, then $\xi_{n,n}<x_\Delta^+$.
\medskip\item
If $n=-{A(x_\Delta^+)/f(x_\Delta^+)}$, then $\xi_{n,n}=x_\Delta^+$.
\medskip\item
If $n>-{A(x_\Delta^+)/f(x_\Delta^+)}$, then $\xi_{n,n}>x_\Delta^+$. 
In this case, we have $\xi_{n,\,n-1}<x_\Delta^+$ for all $n\ge 2$.
\end{itemize}
\end{thm}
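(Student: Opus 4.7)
The plan is to combine the explicit formula \cref{W:xd} for $W_n(x_\Delta^\pm)$ from \cref{lem:xd:10} with a standard sign-count argument. Since the leading coefficient $a^{n-1}>0$ from \cref{lem:deg:10} is positive and $W_n(x)$ has $n$ distinct real roots by \cref{thm:RR:10}, for any non-root point $x$ the sign of $W_n(x)$ equals $(-1)^{n-k}$, where $k$ is the number of roots of $W_n$ lying strictly below $x$. I will apply this test at $x = x_\Delta^-$ for part (i) and at $x = x_\Delta^+$ for part (ii).

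For part (i), using $A(x_\Delta^-) = -2\sqrt{-d} < 0$ from \cref{A:xd:10} and $f(x_\Delta^-) > 0$ by assumption, the formula \cref{W:xd} shows that $W_n(x_\Delta^-)$ has sign $(-1)^n$ when $n < -A(x_\Delta^-)/f(x_\Delta^-)$, vanishes at equality, and has sign $(-1)^{n+1}$ when the inequality reverses. The three bulleted claims about $\xi_{n,1}$ then follow immediately: in the first subcase $k=0$, so $\xi_{n,1} > x_\Delta^-$; in the third, $k$ is odd so $k\ge 1$, giving $\xi_{n,1} < x_\Delta^-$; and in the middle, $x_\Delta^-$ is itself a root of $W_n$, and applying the first subcase at index $n-1$ together with the interlacing $R_n \bowtie R_{n-1}$ of \cref{thm:RR:10} yields $\xi_{n,2} > \xi_{n-1,1} > x_\Delta^-$, pinning the shared root as the smallest one.

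The subtler assertion is the tail statement $\xi_{n,2} > x_\Delta^-$ for all $n \ge 2$ in the regime $n > -A(x_\Delta^-)/f(x_\Delta^-)$, since the sign of $W_n(x_\Delta^-)$ alone only certifies an odd number of roots below $x_\Delta^-$. I handle this by induction on $n$. Using $b\ge 0$ one quickly checks that $-A(x_\Delta^-)/f(x_\Delta^-) > 1$, so the smallest integer $n_0$ exceeding this threshold satisfies $n_0 \ge 2$. The base case $n=n_0$ uses $\xi_{n_0-1,1} \ge x_\Delta^-$ from the first two bullets together with the interlacing $\xi_{n_0,2} > \xi_{n_0-1,1}$. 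For the inductive step, assuming $\xi_{n,2} > x_\Delta^-$, interlacing yields $\xi_{n+1,3} > \xi_{n,2} > x_\Delta^-$, so $W_{n+1}$ has at most two roots below $x_\Delta^-$; the parity restriction from the sign of $W_{n+1}(x_\Delta^-)$ then forces exactly one, i.e.\ $\xi_{n+1,2} > x_\Delta^-$. Part (ii) is entirely parallel, applying \cref{W:xd} at $x_\Delta^+$ with $A(x_\Delta^+) = 2\sqrt{-d} > 0$ and $f(x_\Delta^+) < 0$, tracking the two largest roots via the symmetric interlacing inequality $\xi_{n+1,n-1} < \xi_{n,n-1}$.

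The main obstacle is exactly this induction: the sign formula only controls the parity of how many roots sit below $x_\Delta^-$, so converting parity to an exact count of one root requires both a careful choice of base case at the first integer past the threshold $-A(x_\Delta^-)/f(x_\Delta^-)$ and an appeal to the interlacing $R_{n+1}\bowtie R_n$ at every step to rule out additional roots drifting past $x_\Delta^-$ as $n$ grows.
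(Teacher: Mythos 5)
Your approach is in essence the same as the paper's: both read off the sign of $W_n(x_\Delta^\pm)$ from \cref{W:xd}, convert that sign into a parity constraint on the number of roots of $W_n$ lying below $x_\Delta^-$ (above $x_\Delta^+$), and then use the interlacing $R_{n+1}\bowtie R_n$ to bootstrap from a base case. The only cosmetic difference is that you obtain the sign rule $(-1)^{n-k}$ directly from the positive leading coefficient of $W_n$, whereas the paper gets the same parity information by pairing $W_n(x_\Delta^-)$ with $W_n(x_g^-)$ and invoking the intermediate value theorem on the interval $J^-=(x_g^-,x_\Delta^-)$; these are interchangeable.

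There is, however, a genuine gap in your handling of the first bullet. From the sign $(-1)^n$ of $W_n(x_\Delta^-)$ when $n<n^-:=-A(x_\Delta^-)/f(x_\Delta^-)$, the sign rule gives only that $k$, the number of roots below $x_\Delta^-$, is \emph{even}. You assert $k=0$ as if it were immediate, but nothing in the sign computation rules out $k\in\{2,4,\ldots\}$. This is exactly the same parity-versus-count difficulty that you correctly flag for the third bullet, and it needs the same remedy: an induction on $n$ from $n=0$ (where $k_0=0$ trivially, and $k_1=0$ since the sole root of $W_1$ is $0>x_\Delta^-$), together with the interlacing-derived bound $k_n-k_{n-1}\in\{0,1\}$, which when combined with even parity forces $k_n=0$ throughout the regime $n<n^-$. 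As written, your induction for the third bullet is circular: its base case appeals to bullet~1 for $n_0-1$, but bullet~1 has not actually been established. The fix is simply to run a single induction in $n$ covering all three regimes in order --- $k_n=0$ for $n<n^-$, then the root pinning at $n=n^-$, then $k_n=1$ for $n>n^-$ --- which is precisely the bootstrapping the paper carries out via the quantity $N_n=|R_n\cap J^-|$ and \cref{diff:N-:10}.
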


\begin{proof}
We define
\begin{equation}\label[def]{def:n+-:10}
n^\pm\=-\frac{A(x_\Delta^\pm)}{f(x_\Delta^\pm)}.
\end{equation}
Then \cref{W:xd} can be rewritten as 
\begin{equation}\label{pf:Wxd:10}
W_n(x_\Delta^\pm)\= 
\bgg{1-\frac{n}{n^\pm}}\bgg{{A(x_\Delta^\pm)\over 2}}^n.
\end{equation} 
We shall show (i) and (ii) individually.

\medskip\noindent{\bf (i)} Suppose that 
\begin{equation}\label[ineq]{pf:fD>0:10}
f(x_\Delta^-)>0. 
\end{equation}
From \cref{sgn:A:xd:10}, we see that 
\begin{equation}\label[ineq]{pf:Axd1<0:10}
A(x_\Delta^-)<0.
\end{equation}
Together with \cref{pf:fD>0:10}, we can see from \cref{def:n+-:10} that 
\begin{equation}\label[ineq]{pf:n->0:10}
n^->0.
\end{equation}
Now, in view of \cref{pf:Wxd:10,pf:Axd1<0:10,pf:n->0:10}, we infer that  
\begin{equation}\label[ineq]{sgn:Wxd1:10}
(-1)^n(n-n^-)W_n(x_\Delta^-)
\=-\frac{(n-n^-)^2}{n^-}\bgg{{-A(x_\Delta^-)\over 2}}^n
\;\le\;0,
\end{equation}
where the equality holds if and only if $n=n^-$.
On the other hand, \cref{imp:10} with \cref{pf:fD>0:10} tells us $a>2$. Thus, we infer from \cref{lem:xg:10}~(ii) that
\begin{equation}\label[ineq]{sgn:Wxg1:10}
W_n(x_g^-)(-1)^n\>0.
\end{equation}
Multiplying \cref{sgn:Wxd1:10,sgn:Wxg1:10}, we find 
\begin{equation}\label[ineq]{sgn:W:xg1xd1:10}
W_n(x_g^-)W_n(x_\Delta^-)(n-n^-)\;\le\;0,
\end{equation}
where the equality holds if and only if $n=n^-$.

According to \cref{def:J0:10}, we have $J_0=(x_g^-,x_g^+)$.
From \cref{JJ0:10}, we infer that 
\begin{equation}\label[ineq]{ord:xgxdxdxg:10}
x_g^-<x_\Delta^-<x_\Delta^+<x_g^+.
\end{equation}
Thus we can define the interval 
\[
J^-=(x_g^-,x_\Delta^-).
\] 
By \cref{thm:RR:10}, the roots of the polynomials $W_{n+1}(x)$ and $W_n(x)$ are interlacing in the interval $J_0$.
Since the interval $J^-$ is ``a left part'' of the interval $J_0$,
we infer that in $J^-$, the polynomial $W_{n+1}(x)$ has the same number of roots or one more root than the polynomial $W_n(x)$.
In notation, we let
\begin{equation}\label[def]{def:N:10}
N_n\=|R_n\cap J^-|.
\end{equation}
Then the above argument can be expressed as 
\begin{equation}\label[rl]{diff:N-:10}
N_{n}-N_{n-1}\in\{0,1\}\all{n\ge1}.
\end{equation}
Below we show Result (i) by bootstrapping.

\medskip\noindent{\bf Case $n<n^-$.} 
Since the zero-set $R_0$ is empty, we infer from \cref{def:N:10} that $N_0=0$.
We claim that 
\[
N_n=0,\all{n<n^-}.
\]

If $n^-\le 1$, then the claim holds true trivially. Below we suppose that $n^->1$.
We proceed by induction on $n$. Suppose that $N_{m-1}=0$ for some $1\le m<n^-$.
From \cref{diff:N-:10}, we infer that 
\begin{equation}\label{pf1:Nn<:10}
N_{m}\in\{0,1\}.
\end{equation}

Since $n<n^-$, \cref{sgn:W:xg1xd1:10} implies that $W_n(x_g^-)W_n(x_\Delta^-)>0$.
In other words, at the ends of the interval $J^-$, 
the continuous function $W_n(x)$ admits the same sign.
Therefore, from the intermediate value theorem, 
we infer that $W_n(x)$ has an even number of roots in the interval $J^-$, namely, 
the integer~$N_n$ is even as if $n<n^-$.
In view of \cref{pf1:Nn<:10}, we find that $N_m=0$.
This proves the claim.

The claim, in fact, states that the polynomial $W_n(x)$ has no roots in the interval $J^-=(x_g^-,x_\Delta^-)$.
Recall from \cref{thm:RR:10} that all roots of $W_n(x)$ lie in the interval $J_0=(x_g^-,\,x_g^+)$.
Therefore, the bound of the roots of $W_n(x)$ can be improved to the interval $[x_\Delta^-,x_g^+)$.
Moreover, it is easy to see that $W_n(x_\Delta^-)\neq0$ from \cref{pf:Wxd:10}. Hence, we obtain that $R_n\subset(x_\Delta^-,x_g^+)$.
In particular, we have
\begin{equation}\label[rst]{pf:n<m-:10}
\xi_{n,1}\>x_\Delta^-\all{n<n^-}.
\end{equation}

\medskip\noindent{\bf Case $n=n^-$.}
From \cref{pf:Wxd:10}, we see that 
\[
W_n(x_\Delta^-)\=\bgg{1-\frac{n}{n^-}}\bgg{{A(x_\Delta^-)\over 2}}^n=0.
\]
In other words, the number $x_\Delta^-$ is a root of the polynomial $W_{n}(x)$.
We shall show that $x_\Delta^-$ is the smallest root of $W_n(x)$.

If $n=1$, then the polynomial $W_{n}(x)=W_1(x)=x$ has only one root.
Therefore, the root $x_\Delta^-$ has to be the smallest root.
When $n\ge 2$, the interlacing property $R_{n}\bowtie R_{n-1}$ implies that $\xi_{n,2}\>\xi_{n-1,\,1}$.
From \cref{pf:n<m-:10}, we infer that $\xi_{n-1,\,1}\>x_\Delta^-$. 
It follows that $\xi_{n,2}>x_\Delta^-$. In other words,
the second smallest root of the polynomial $W_{n}(x)$ is larger than the root $x_\Delta^-$.
It turns out immediately that the root $x_\Delta^-$ is the smallest one, namely,
\begin{equation}\label[rst]{pf:n=m-:10}
\xi_{n,1}\=x_\Delta^-,\qquad\text{if $n=n^-$}.
\end{equation}

\medskip\noindent{\bf Case $n>n^-$.}
In this case, \cref{sgn:W:xg1xd1:10} reads that $W_n(x_g^-)W_n(x_\Delta^-)<0$.
As in the case $n<n^-$, we can deduce by using the intermediate value theorem that 
the integer $N_n$ is odd.
In particular, we have $N_n\ge1$. 

We claim that 
\begin{equation}\label[rst]{pf:Nn=1:10}
N_n\=1\all{n>n^-}.
\end{equation}
Combining \cref{pf:n<m-:10,pf:n=m-:10}, we see that for any $m\le n^-$,
the smallest root of the polynomial $W_m(x)$ is at least $x_\Delta^-$. It follows that $N_m=0$.
In particular, we have $N_{\fl{n^-}}=0$.
Thus, by \cref{diff:N-:10}, we have 
\[
N_{\fl{n^-}+1}\in\{0,1\}.
\] 
Since $\fl{n^-}+1>n^-$, the integer~$N_{\fl{n^-}+1}$ is odd.
Thus $N_{\fl{n^-}+1}=1$. 
Proceeding by induction, we can suppose that $N_{n}=1$ for some $n>n^-$.
By \cref{diff:N-:10}, we infer that $N_{n+1}\in\{1,2\}$. Since the integer~$N_{n+1}$ is odd,
we deduce that $N_{n+1}=1$. This confirms the claim.

By \cref{def:N:10} of the number~$N_n$ as $|R_n\cap(x_g^-,x_\Delta^-)|$, 
\cref{pf:Nn=1:10} implies that 
$\xi_{n,1}<x_\Delta^-$ if $n>n^-$.
From \cref{pf:Wxd:10}, we see that $W_n(x_\Delta^-)\neq0$.
Together with \cref{pf:Nn=1:10}, we obtain that
$\xi_{n,2}>x_\Delta^-$ as if $n>n^-$ and $n\ge 2$.

This completes the proof of (i).

\medskip\noindent{\bf (ii)}
Suppose that $f(x_\Delta^+)<0$. 
Analogous to \cref{sgn:W:xg1xd1:10}, one may show that 
\begin{equation}\label[ineq]{sgn:W:xg2xd2:10}
W_n(x_\Delta^+)W_n(x_g^+)(n-n^+)\le 0,
\end{equation}
where the equality holds if and only if $n=n^+$.

According to \cref{def:J0:10}, we have $J_0=(x_g^-,x_g^+)$ or $J_0=(x_\Delta^-,x_g^+)$.
From \cref{JJ0:10}, we can define 
\[
N_n^+\=|R_n\cap (x_\Delta^+,x_g^+)|.
\]
Since the interval $(x_\Delta^+,x_g^+)$ is ``a right part'' of the interval $J_0$,
the interlacing property implies that
\begin{equation}\label[rl]{diff:N+:10}
N_n^+-N_{n-1}^+\in\{0,1\}\all{n\ge1}.
\end{equation}
Similar to (i), by using the intermediate value theorem, the parity arguments,
with the aids of \cref{sgn:W:xg2xd2:10,diff:N+:10},
we can show the desired results in (ii).

This completes the proof of \cref{thm:bound:10}.
\end{proof}
\medskip

\subsection{The numbers $x_g^\pm$ can be limit points}\label{sec:xg:10}

From \cref{thm:bound:10}, we see that when $f(x_\Delta^-)>0$, 
the smallest roots are eventually less than the number~$x_\Delta^-$.  
\Cref{thm:xg1:10} shows that they converge to the number $x_g^-$.  
Similarly, when $f(x_\Delta^+)<0$, the largest roots are eventually larger than the number~$x_\Delta^+$.  
\Cref{thm:xg2:10} shows that they converge to the number $x_g^+$ in that case.
\smallskip

In the proofs of these two theorems, we will use \cref{sol:W:d>0}, 
which is obtained from \cref{lem:00} straightforwardly.
\begin{prop}
If $\Delta(x_0)>0$, then  
\begin{equation}\label[fml]{sol:W:d>0}
 W_n(x_0)
\={\bigl(A(x_0)\!+\!\sqrt{\Delta(x_0)}\,\bigr)^{\!n}\over 2^{n}\sqrt{\Delta(x_0)}}
\left[g^+\!(x_0)-g^-\!(x_0)\bgg{{A(x_0)-\sqrt{\Delta(x_0)}\over A(x_0)+\sqrt{\Delta(x_0)}}\,}^{\!\!n}\right]. 
\end{equation}
\end{prop}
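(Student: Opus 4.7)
The plan is to obtain the formula as a direct specialization and repackaging of \cref{lem:00}. For a fixed $x_0\in\R$, the scalar sequence $\{W_n(x_0)\}_{n\ge0}$ satisfies $W_n(x_0)=A(x_0)W_{n-1}(x_0)+d\,W_{n-2}(x_0)$ with $W_0(x_0)=1$ and $W_1(x_0)=x_0$, so \cref{lem:00} applies with the role of its $A$ played by $A(x_0)$ and the role of its $B$ played by $d$. Its discriminant $A(x_0)^2+4d$ is precisely $\Delta(x_0)$ from \cref{def:Delta:10}, and the auxiliary quantities $(2W_1(x_0)-A(x_0)\pm\sqrt{\Delta(x_0)})/2$ coincide with $g^\pm(x_0)$ from \cref{def:g+-:10}.

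Under the hypothesis $\Delta(x_0)>0$, the second case of \cref{lem:00} then yields
\[
W_n(x_0) \= \frac{g^+(x_0)\bigl(A(x_0)+\sqrt{\Delta(x_0)}\,\bigr)^n - g^-(x_0)\bigl(A(x_0)-\sqrt{\Delta(x_0)}\,\bigr)^n}{2^n\sqrt{\Delta(x_0)}}.
\]
The only remaining task is to factor $(A(x_0)+\sqrt{\Delta(x_0)})^n$ out of the numerator. For this factoring to be legitimate one needs $A(x_0)+\sqrt{\Delta(x_0)}\ne0$; but equality would force $\sqrt{\Delta(x_0)}=-A(x_0)$ and hence $\Delta(x_0)=A(x_0)^2$, contradicting $\Delta(x_0)=A(x_0)^2+4d$ with $d<0$ (the standing hypothesis for a \oz). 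So the factorization is unconditionally valid, and a one-line algebraic rearrangement delivers \cref{sol:W:d>0}.

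There is no substantive obstacle here: the proposition is simply a reformulation of \cref{lem:00} into the shape best adapted for the limit analyses carried out in \cref{sec:xg:10}. The motivation for this particular shape is that the ratio $(A(x_0)-\sqrt{\Delta(x_0)})/(A(x_0)+\sqrt{\Delta(x_0)})$ has modulus strictly less than $1$ whenever $A(x_0)>0$ — its numerator and denominator share a sign, since their product $-4d$ is positive, and the denominator is then the larger in absolute value. Hence the bracketed correction term decays geometrically in $n$ and the dominant growth of $W_n(x_0)$ is captured by the prefactor $(A(x_0)+\sqrt{\Delta(x_0)})^n$, which is exactly the asymptotic form needed to read off the limits of the smallest and largest roots.
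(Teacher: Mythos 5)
Your proof is correct and is exactly the paper's approach: the paper simply remarks that the proposition is ``obtained from \cref{lem:00} straightforwardly,'' and you have filled in the routine substitution, the factoring, and the (worthwhile) check that $A(x_0)+\sqrt{\Delta(x_0)}\ne0$ because $d<0$. Nothing further is needed.
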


\begin{thm}\label{thm:xg1:10}
Let $\{W_n(x)\}_{n\ge0}$ be a \oz. 
If $f(x_\Delta^-)>0$, then $\xi_{n,1}\searrow x_g^-$.
\end{thm}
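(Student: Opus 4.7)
The plan is to combine monotonicity coming from the interlacing property with the explicit two-term formula~\cref{sol:W:d>0} to pin down the limit. First, I would observe that $\{\xi_{n,1}\}_{n\ge 1}$ is strictly decreasing and bounded below. Indeed, the interlacing $R_{n+1}\bowtie R_n$ from \Cref{thm:RR:10} gives $\xi_{n+1,1}<\xi_{n,1}$ for every $n\ge 1$, while the same theorem guarantees $R_n\subset J_0=(x_g^-,x_g^+)$, so $\xi_{n,1}>x_g^-$. Hence $L:=\lim_{n\to\infty}\xi_{n,1}$ exists with $L\ge x_g^-$. The hypothesis $f(x_\Delta^-)>0$ also lets me invoke \Cref{thm:bound:10}~(i): once $n>n^-$ we have $\xi_{n,1}<x_\Delta^-$, and monotonicity then forces $L<x_\Delta^-$.

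It remains to rule out $L\in(x_g^-,x_\Delta^-)$. Under that assumption $L\ne x_g^\pm$ (recall $x_g^-<x_\Delta^-<x_g^+$ from~\cref{ord:xgxdxdxg:10}), so $g(L)=g^-(L)g^+(L)\ne 0$; moreover $\Delta(L)>0$, since $\Delta$ is negative only on $(x_\Delta^-,x_\Delta^+)$. By continuity, both properties are inherited by $\xi_{n,1}$ for all large $n$, and \cref{sol:W:d>0} applies at $x_0=\xi_{n,1}$. Setting $W_n(\xi_{n,1})=0$ in that identity and solving for the ratio $g^+(\xi_{n,1})/g^-(\xi_{n,1})$ yields
\[
\frac{g^+(\xi_{n,1})}{g^-(\xi_{n,1})}\=\bgg{\frac{A(\xi_{n,1})-\sqrt{\Delta(\xi_{n,1})}}{A(\xi_{n,1})+\sqrt{\Delta(\xi_{n,1})}}}^n.
\]
The left-hand side converges to the finite number $g^+(L)/g^-(L)$, while I will argue that the right-hand side diverges, producing the desired contradiction.

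To see the divergence, note that $A$ is increasing and $L<x_\Delta^-$, so $A(L)<A(x_\Delta^-)=-2\sqrt{-d}<0$ by~\cref{A:xd:10}, while $\sqrt{\Delta(L)}<|A(L)|$ holds because $\Delta=A^2+4d$ with $d<0$. Hence $A(L)\pm\sqrt{\Delta(L)}$ are both negative with $|A(L)-\sqrt{\Delta(L)}|>|A(L)+\sqrt{\Delta(L)}|$, so the base is strictly greater than $1$ at $L$ and, by continuity, stays bounded away from $1$ for $\xi_{n,1}$ near $L$. Consequently the right-hand side tends to $+\infty$, giving the contradiction and forcing $L=x_g^-$.

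I anticipate no serious obstacle beyond some bookkeeping; the only point deserving care is the verification that $g^-(\xi_{n,1})$ is nonzero for large $n$, which is immediate once $\xi_{n,1}\in(x_g^-,x_\Delta^-)$ is known. The underlying intuition is simply that on $(x_g^-,x_\Delta^-)$ the recursion's two characteristic roots $\lambda_\pm=(A\pm\sqrt{\Delta})/2$ are real with $|\lambda_-|>|\lambda_+|$, and the dominant mode $\lambda_-^n$ can be killed at $\xi_{n,1}$ only by pushing $\xi_{n,1}$ toward a zero of the coefficient $g^-$, i.e., toward $x_g^-$.
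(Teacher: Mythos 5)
Your proposal is correct, and it takes a genuinely different (though closely related) route from the paper's. Both proofs rest on the closed-form \cref{sol:W:d>0} and on the observation that the characteristic ratio $\rho(x)=\bigl(A(x)-\sqrt{\Delta(x)}\bigr)/\bigl(A(x)+\sqrt{\Delta(x)}\bigr)$ strictly exceeds $1$ on $(x_g^-,x_\Delta^-)$, but they diverge after that. The paper fixes an auxiliary point $x_0\in(x_g^-,\xi^-)$ (where $\xi^-$ denotes the limit), propagates the sign $W_n(x_g^-)(-1)^n>0$ across a root-free interval to conclude $W_n(x_0)>0$ for even $n$, and then reads off from \cref{sol:W:d>0} that $g^-(x_0)\le 0$; since $f(x_0)>0$ forces $g^+(x_0)>0$, this yields $g(x_0)\le 0$, contradicting $g>0$ on $(x_g^-,x_g^+)$. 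You instead evaluate \cref{sol:W:d>0} directly at the moving root $x_0=\xi_{n,1}$, obtain the identity $g^+(\xi_{n,1})/g^-(\xi_{n,1})=\rho(\xi_{n,1})^n$, and derive a boundedness contradiction: the left side tends to the finite value $g^+(L)/g^-(L)$ (valid because $L\ne x_g^{\pm}$ forces $g^-(L)\ne 0$, and $\Delta(L)>0$ keeps $g^{\pm}$ continuous there), while the right side blows up since $\rho(\xi_{n,1})$ stays bounded away from~$1$. Your version is somewhat more direct in that it uses the root equation itself rather than a sign argument at an auxiliary point; the paper's version avoids the need to track continuity and nonvanishing of $g^-$ along the sequence of roots by working at a single fixed $x_0$. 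Both are sound, and your reading of the ``dominant mode must be suppressed'' heuristic correctly captures what is happening.

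One small presentational caveat: you should state explicitly that $a>2$ (from \cref{imp:10}) so that $g$ is a genuine quadratic with exactly the two zeros $x_g^{\pm}$; this underpins the step ``$L\ne x_g^{\pm}$ implies $g(L)\ne 0$ implies $g^-(L)\ne 0$.'' Since $f(x_\Delta^-)>0$ indeed forces $a>2$, the gap is cosmetic rather than substantive.
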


\begin{proof}
Suppose that $f(x_\Delta^-)>0$. Then $a>2$ by \cref{imp:10}.

By \cref{thm:bound:10}~(i), we have $\xi_{n,1}\in(x_g^-,\,x_\Delta^-)$ for large~$n$.
The property $R_{n+1}\bowtie R_{n}$ 
obtained in \cref{thm:RR:10} implies that the sequence $\xi_{n,1}$ decreases.
Therefore, there exists a number $\xi^-\in[x_g^-,\,x_\Delta^-)$ such that 
$\lim_{n\to\infty}\xi_{n,1}=\xi^-$.
Suppose, to the contrary, that $\xi^-\ne x_g^-$. 
Let $x_0\in(x_g^-,\,\xi^-)$. Then 
\begin{equation}\label[rl]{pf:order2:10}
x_g^-\<x_0\<\xi^-\<x_\Delta^-\<-b/a.
\end{equation}
We claim that
\begin{equation}\label[ineq]{pf:claim1:10}
g^-(x_0)\;\le\;0.
\end{equation}

Since the interval $(x_g^-,\,\xi^-)$ contains no roots of any polynomial $W_n(x)$,
by the intermediate value theorem, we infer that 
\[
W_n(x_g^-)W_n(x_0)\>0.
\]
Since $a>2$, by \cref{lem:xg:10}~(ii), we have $W_n(x_g^-)(-1)^n>0$. Together with the above inequality, we derive that
$W_n(x_0)(-1)^n>0$. In particular, we have 
\begin{equation}\label[ineq]{pf:x0:xg1:10}
W_n(x_0)\>0\qquad\text{for all positive even integers $n$}.
\end{equation}
On the other hand, since $a>0$, the function $A(x)=ax+b$ is increasing.
From \cref{pf:order2:10}, we see that $x_0<-b/a$, and thus, $A(x_0)<A(-b/a)=0$. 
Since $d<0$, from \cref{def:Delta:10}, we deduce that
\[
A(x_0)-\sqrt{\Delta(x_0)}\<A(x_0)+\sqrt{\Delta(x_0)}\=A(x_0)+\sqrt{A^2(x_0)+4d}\<0.
\]
Therefore, we have
\begin{equation}\label[ineq]{pf:AA:xg1:10}
{A(x_0)-\sqrt{\Delta(x_0)}\over A(x_0)+\sqrt{\Delta(x_0)}}\>1.
\end{equation}
Set $n$ to be a large even integer. 
By \cref{def:xd+-:10}, the function~$\Delta(x)=(ax+b)^2+4d$ is a quadratic polynomial with a positive leading coefficient.
From \cref{pf:order2:10}, we see that $x_0<x_\Delta^-$, and thus, $\Delta(x_0)>0$.
By \cref{pf:x0:xg1:10}, the right hand side of \cref{sol:W:d>0} is positive.
In view of \cref{pf:AA:xg1:10},
we deduce that $g^-(x_0)\leq0$. This confirms the claim.

Since $a>2$, by \cref{def:f:10}, the function $f(x)=(2-a)x-b$ is decreasing. 
Since $x_0<x_\Delta^-$, we infer that $f(x_0)>f(x_\Delta^-)>0$.
Thus, by \cref{def:g+-:10}, we obtain that 
$g^+(x_0)=(f(x_0)+\sqrt{\Delta(x_0)})/2>0$.
Together with \cref{pf:claim1:10}, we derive that 
\begin{equation}\label[ineq]{pf23}
g(x_0)\=g^-(x_0)g^+(x_0)\;\le\;0.
\end{equation}

From \cref{def:g:10}, we see that $g(x)=(1-a)x^2-bx-d$
is a quadratic polynomial with negative leading coefficient.
Thus we have $g(x)>0$ for all $x\in(x_g^-,\,x_g^+)$.
By \cref{lem:J0:10}, we have $x_g^-<x_\Delta^-<x_\Delta^+<x_g^+$.
Together with \cref{pf:order2:10}, we deduce that $x_0\in(x_g^-,\,x_g^+)$, and thus, 
$g(x_0)>0$, contradicting \cref{pf23}. This completes the proof.
\end{proof}

To show that the number $x_g^+$ can be a limit point, we need the following lemma.
\begin{lem}\label{lem:f:xg2:10}
If $f(x_\Delta^+)<0$, then $f(x)<0$ for all $x\in(x_\Delta^+,\,x_g^+)$.
\end{lem}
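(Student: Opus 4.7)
The plan is to split according to whether $a\geq 2$ or $0<a<2$. In the easier regime $a\geq 2$, the linear function $f(x)=(2-a)x-b$ is non-increasing, so the hypothesis $f(x_\Delta^+)<0$ immediately gives $f(x)\leq f(x_\Delta^+)<0$ on the whole of $(x_\Delta^+,x_g^+)$, and there is nothing more to do.

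In the regime $0<a<2$, $f$ is strictly increasing, so it suffices to prove the single endpoint inequality $f(x_g^+)\leq 0$; strict monotonicity then yields $f(x)<f(x_g^+)\leq 0$ for every $x<x_g^+$. Assuming toward contradiction that $f(x_g^+)>0$, continuity of $f$ together with $f(x_\Delta^+)<0$ produces, via the intermediate value theorem, a point $x^{*}\in(x_\Delta^+,x_g^+)$ with $f(x^{*})=0$.

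The key device is the algebraic identity $4g(x)=f(x)^2-\Delta(x)$, which is immediate from $g=g^-g^+$ in \cref{def:g:10} combined with $g^\pm=(f\pm\sqrt{\Delta})/2$ in \cref{def:g+-:10}. Evaluating at $x^{*}$ yields $4g(x^{*})=-\Delta(x^{*})$, and since $\Delta$ is an upward-opening quadratic vanishing precisely at $x_\Delta^\pm$ and $x^{*}>x_\Delta^+$, we have $\Delta(x^{*})>0$, hence $g(x^{*})<0$. To close the argument, I plan to show $g(x)>0$ throughout $(x_\Delta^+,x_g^+)$. The same identity applied at $x_\Delta^+$ provides the anchor $g(x_\Delta^+)=f(x_\Delta^+)^2/4>0$. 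I then split on the sign of $1-a$: for $a>1$, the quadratic $g$ opens downward with $x_g^-<x_g^+$ by \cref{lem:xg:10}~(ii), and since $g(x_\Delta^+)>0$, the point $x_\Delta^+$ lies in the positivity interval $(x_g^-,x_g^+)$, which already contains the segment $(x_\Delta^+,x_g^+)$; for $a<1$, $g$ opens upward with $x_g^+\leq x_g^-$ by \cref{lem:xg:10}~(i), and the inequality $x_\Delta^+<x_g^+$ from \cref{lem:J0:10} places $(x_\Delta^+,x_g^+)$ inside the left positivity component $(-\infty,x_g^+)$; for $a=1$, $g(x)=-bx-d$ is linear with $g>0$ on $(-\infty,-d/b)=(-\infty,x_g^+)$, and again $x_\Delta^+<x_g^+$.

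The main obstacle I anticipate is the root-position bookkeeping for $g$: the parabola flips concavity at $a=1$ and the ordering of $x_g^\pm$ reverses as well, so each regime must appeal separately to \cref{lem:xg:10} and \cref{lem:J0:10}. Once the positivity of $g$ on $(x_\Delta^+,x_g^+)$ is pinned down, the contradiction with $g(x^{*})<0$ closes the argument.
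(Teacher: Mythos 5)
Your proof is correct, and it takes a genuinely different route from the paper's. Both proofs begin with the easy observation that $f$ is non-increasing when $a\ge 2$, and both reduce the remaining case $0<a<2$ to an endpoint inequality at $x_g^+$. From there the paper simply plugs the explicit formula for $x_g^+$ into $f$, computes $2(1-a)f(x_g^+)=(a-2)\sqrt{\Delta_g}+ab$, and runs a chain of sign equivalences (squaring and reducing modulo the hypothesis $b^2+d(a-2)^2>0$) until the sign of $f(x_g^+)$ is pinned down, with a separate calculation for $a=1$. You instead avoid the algebra entirely by exploiting the identity $4g=f^2-\Delta$: any hypothetical zero $x^*$ of $f$ in $(x_\Delta^+,x_g^+)$ would force $4g(x^*)=-\Delta(x^*)<0$, while the unconditional anchor $4g(x_\Delta^+)=f(x_\Delta^+)^2>0$ together with the root ordering of $g$ (drawn from \cref{lem:xg:10} and \cref{lem:J0:10}, and split on the concavity regimes $a<1$, $a=1$, $a>1$) shows $g>0$ on all of $(x_\Delta^+,x_g^+)$, a contradiction. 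Your version is more conceptual and also cleaner on one small point: you only need $f(x_g^+)\le 0$ (strict monotonicity of $f$ supplies the strict inequality on the open interval), whereas the paper's equivalence chain a priori yields only $f(x_g^+)\le 0$ when $a<1$ but asserts the strict version without comment; your identity argument in fact closes that gap too, since $f(x_g^+)=0$ would give $0=g(x_g^+)=-\Delta(x_g^+)/4<0$. The price you pay is a three-way case analysis on the parabola $g$, which the paper sidesteps with arithmetic.
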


\begin{proof}
See \Cref{apd:f:10}.
\end{proof}

The proof of the limit point $x_g^+$ is similar to the proof of the limit point $x_g^-$.

\begin{thm}\label{thm:xg2:10}
Let $\{W_n(x)\}_{n\ge0}$ be a \oz. 
If $f(x_\Delta^+)<0$, then $\xi_{n,n}\nearrow x_g^+$.
\end{thm}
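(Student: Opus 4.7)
The plan is to mimic the proof of \cref{thm:xg1:10}, adapted to the right endpoint. The twist is that our hypothesis $f(x_\Delta^+)<0$ does \emph{not} imply $a>2$ (only $f(x_\Delta^-)>0$ does, via \cref{imp:10}), so the final case analysis will have to work for every $a>0$, not merely for $a>2$.

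First, \cref{thm:bound:10}(ii) gives $\xi_{n,n}>x_\Delta^+$ for all large $n$, while the interlacing $R_{n+1}\bowtie R_n$ from \cref{thm:RR:10} forces $\xi_{n+1,n+1}>\xi_{n,n}$. The containment $R_n\subset J_0$, whose right endpoint equals $x_g^+$ under the hypothesis $f(x_\Delta^+)<0$ (see \cref{def:J0:10} and \cref{lem:J0:10}), then bounds $\xi_{n,n}$ from above by $x_g^+$. Hence $\xi_{n,n}$ increases to some limit $\xi^+\in[x_\Delta^+,x_g^+]$, and the task reduces to ruling out $\xi^+<x_g^+$.

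Assume $\xi^+<x_g^+$ for contradiction and pick $x_0\in(\xi^+,x_g^+)$. Because $x_0>\xi_{n,n}$ for every $n$, the polynomial $W_n(x)$ has no zero in $[x_0,x_g^+]$; combined with $W_n(x_g^+)=(x_g^+)^n>0$ from \cref{lem:xg:10}, the intermediate value theorem gives $W_n(x_0)>0$ for every $n$. Since $x_0>x_\Delta^+>-b/a$, the increasing function $A$ satisfies $A(x_0)>A(x_\Delta^+)=2\sqrt{-d}>0$; combined with $\Delta(x_0)>0$ and $d<0$, the numbers $A(x_0)\pm\sqrt{\Delta(x_0)}$ are both positive and their ratio $\rho=\bigl(A(x_0)-\sqrt{\Delta(x_0)}\bigr)/\bigl(A(x_0)+\sqrt{\Delta(x_0)}\bigr)$ lies in $(0,1)$. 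Substituting $x_0$ into \cref{sol:W:d>0} and dividing through by the positive prefactor $(A(x_0)+\sqrt{\Delta(x_0)})^n/(2^n\sqrt{\Delta(x_0)})$ gives
\[
g^+(x_0)-g^-(x_0)\,\rho^n\>0\all{n\ge 1}.
\]
Letting $n\to\infty$ so that $\rho^n\to 0$ then forces $g^+(x_0)\ge 0$.

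The main obstacle is to obtain the opposite strict inequality $g^+(x_0)<0$. By \cref{lem:f:xg2:10}, $f(x_0)<0$ on $(x_\Delta^+,x_g^+)$, so $g^-(x_0)=(f(x_0)-\sqrt{\Delta(x_0)})/2<0$ is automatic. Since $g^+g^-=g$, it suffices to establish $g(x_0)>0$ on $(x_\Delta^+,x_g^+)$. I would handle this by a short split on the sign of $a-1$. If $a>1$, then $g$ is a downward parabola with roots $x_g^-<x_g^+$, and $g(x_\Delta^+)=f^2(x_\Delta^+)/4>0$ (using $\Delta(x_\Delta^+)=0$ and $f(x_\Delta^+)\ne0$) forces $x_g^-<x_\Delta^+$, so $(x_\Delta^+,x_g^+)\subset(x_g^-,x_g^+)$ where $g>0$. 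If $a=1$, the hypothesis $f(x_\Delta^+)<0$ forces $b>\sqrt{-d}>0$, and $g(x)=-bx-d>0$ precisely for $x<-d/b=x_g^+$. If $a<1$, then $g$ is an upward parabola whose two roots satisfy $x_g^+\le x_g^-$ by \cref{lem:xg:10}(i), so any $x_0<x_g^+$ lies weakly to the left of both roots, again giving $g(x_0)>0$. The three cases together produce $g^+(x_0)<0$ and contradict $g^+(x_0)\ge 0$, completing the proof.
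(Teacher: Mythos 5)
Your proof is correct and follows essentially the same route as the paper: assume $\xi_{n,n}\to\xi^+<x_g^+$, pick $x_0\in(\xi^+,x_g^+)$, show $W_n(x_0)>0$ for all $n$, use \cref{sol:W:d>0} with $\rho\in(0,1)$ to force $g^+(x_0)\ge 0$, then contradict this via $f(x_0)<0$ (from \cref{lem:f:xg2:10}) and a case split on the sign of $a-1$ showing $g(x_0)>0$. Your justification in the $a>1$ case --- using $\Delta(x_\Delta^+)=0$ to get $g(x_\Delta^+)=f^2(x_\Delta^+)/4>0$ and hence $x_g^-<x_\Delta^+<x_g^+$ --- in fact supplies a small detail that the paper glosses over when it cites \cref{lem:J0:10} for that containment, since that lemma only gives $x_g^-<x_\Delta^-$ under the extra hypothesis $f(x_\Delta^-)>0$, which is not assumed here.
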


\begin{proof}
Suppose that $f(x_\Delta^+)<0$.
Suppose, by way of contradiction, that the convergent point of the largest roots is not the number $x_g^+$. 
Then, there exists a number $x_0\in (x_\Delta^+,\,x_g^+)$ such that $W_n(x_0)W_n(x_g^+)>0$, that is,
\begin{equation}\label[ineq]{pf:x0:xg2:10}
W_n(x_0)>0.
\end{equation}
Similar to the proof of \cref{thm:xg1:10}, we can show that $A(x_0)>0$, $\Delta(x_0)>0$, and 
\[
A(x_0)+\sqrt{\Delta(x_0)}\>A(x_0)-\sqrt{\Delta(x_0)}\=A(x_0)-\sqrt{A^2(x_0)+4d}\>0.
\]
Therefore, we have
\begin{equation}\label[rl]{pf:AA:xg2:10}
{A(x_0)-\sqrt{\Delta(x_0)}\over A(x_0)+\sqrt{\Delta(x_0)}}\;\in\;(0,1).
\end{equation}
Set $n$ to be a large even integer. By \cref{pf:x0:xg2:10}, the right hand side of \cref{sol:W:d>0} is positive.
In view of \cref{pf:AA:xg2:10}, we infer that 
\begin{equation}\label{pf30}
g^+(x_0)\geq0.
\end{equation}
By \cref{lem:f:xg2:10}, we have $f(x_0)<0$.
Therefore, $g^-(x_0)=f(x_0)-\sqrt{\Delta(x_0)}<0$, and thus, 
\[
g(x_0)=g^-(x_0)g^+(x_0)\le0.
\]
Below, we show that $g(x_0)>0$, which implies an immediate contradiction.
Recall from \cref{def:g:10} that $g(x)=(1-a)x^2-bx-d$.
\begin{itemize}
\smallskip\item
When $a<1$, the quadratic function $g(x)$ has a positive leading coefficient.
On the other hand, by \cref{lem:xg:10}~(i), 
the number $x_g^+$ is the smaller root of the function~$g(x)$.
Therefore, we have $g(x)>0$ for all $x<x_g^+$. In particular, we have $g(x_0)>0$.
\smallskip\item
When $a=1$, we have $g(x)=-bx-d$. 
If $b=0$, then $g(x)=-d>0$ for all $x\in\R$. Otherwise $b>0$, then the function $g(x)$ is decreasing.
Thus, for any $x<x_g^+$, we have $g(x)>g(x_g^+)=0$. In particular, we have $g(x_0)>0$.
\smallskip\item
When $a>1$, the leading coefficient of the quadratic function $g(x)$ is negative.
On the other hand, by \cref{lem:xg:10}~(ii),
the number $x_g^+$ is the larger root of the function~$g(x)$.
Therefore, we have $g(x)>0$ for all $x\in(x_g^-,\,x_g^+)$.
In particular, we have $g(x_0)>0$, by \cref{lem:J0:10}.
\end{itemize}
This completes the proof.
\end{proof}

\medskip
\subsection{The numbers $x_\Delta^\pm$ are limit points}\label{sec:xd:10}

In this subsection, we will show that the numbers $x_\Delta^\pm$ are limit points of the union $\cup_{n\ge1}R_n$. 
For proof convenience, we will adopt the polar coordinate system.

\begin{dfn}\label{def:pv}
Let $\theta\in\R$. We define the {\em principal value} of the number~$\theta$, denote by~$\pv(\theta)$, 
to be the unique number $\theta'\in[\,0,2\pi)$ such that the difference $\theta-\theta'$ is an integral multiple of the number~$2\pi$.
In the polar coordinate system, we adopt the wording
\begin{itemize}
\smallskip\item
{\em the angle $\theta$}, to mean the angle of size $\theta$;
\smallskip\item
{\em the ray $\theta$}, to mean the ray starting from the origin with the incline angle $\pv(\theta)$; and
\smallskip\item
{\em the line $\theta$}, to mean the line on which lies the ray $\theta$.
\end{itemize} 
Let $\psi\in[\,0,\,\pi)$. We say that the angle~$\theta$ lies 
\begin{itemize}
\smallskip\item
{\em to the left of the line~$\psi$}, if $\theta\in(\psi+2k\pi,\,\psi+(2k+1)\pi)$ for some integer $k$;
\smallskip\item
{\em to the right of the line $\psi$}, if $\theta\in(\psi+(2k-1)\pi,\,\psi+2k\pi)$ for some integer $k$;
\smallskip\item
{\em on the line~$\psi$}, if the line $\theta$ coincides with the line $\psi$.
\end{itemize}
\end{dfn}
For example, when $\psi\in[\,0,\pi/2)$ (resp., $\psi\in(\pi/2,\pi)$), 
the angle~$\theta$ lies to the left of the line $\psi$ if and only if 
it is above (resp., below) the line $\psi$, intuitively.


To characterize the sign of the value $W_n(x)$ for the real numbers $x$ such that $\Delta(x)<0$, we need the following notation.

\begin{ntt}\label{ntt:theta:psi:10}
Let $\{W_n(x)\}_{n\ge0}$ be a \oz. 
Let $x\in\R$ such that $\Delta(x)<0$.
Define the angle $\theta_{x}\in(0,\pi)$ by 
\begin{equation}\label[def]{def:theta0:10}
\tan\theta_{x}\={\sqrt{-\Delta(x)}\over A(x)}.
\end{equation}
Define the angle $\psi_x\in(0,\pi)$ by
\begin{equation}\label[def]{def:l0:10}
\tan\psi_x\={-\sqrt{-\Delta(x)}\over (2-a)x-b}.
\end{equation}
\end{ntt}

Here is a characterization for the sign of the value~$W_n(x_0)$.

\begin{thm}\label{thm:sgn:d<0:10}
Let $\{W_n(x)\}_{n\ge0}$ be a \oz. Let $x\in\R$ such that $\Delta(x)<0$.
Then we have
\begin{itemize}
\smallskip\item
$W_n(x)<0$ if and only if the angle $n\theta_{x}$ lies to the left of the line $\psi_{x}$;
\smallskip\item
$W_n(x)=0$ if and only if  the angle $n\theta_{x}$ lies on the line $\psi_{x}$;
\smallskip\item
$W_n(x)>0$ if and only if  the angle $n\theta_{x}$ lies to the right of the line $\psi_{x}$.
\end{itemize}
\end{thm}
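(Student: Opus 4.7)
The plan is to apply \cref{lem:00} to the recursion with $A=A(x)$, $B=d$, and $W_1=W_1(x)=x$, noting that $2W_1(x)-A(x)=f(x)$. Since $\Delta(x)<0$, the complex number $A(x)+\sqrt{\Delta(x)}=A(x)+i\sqrt{-\Delta(x)}$ has modulus $R=\sqrt{A^2(x)-\Delta(x)}=2\sqrt{-d}>0$ and argument exactly the $\theta_x\in(0,\pi)$ of \cref{def:theta0:10} (including the boundary case $A(x)=0$, which forces $\theta_x=\pi/2$). \Cref{lem:00} then yields
\begin{equation}\label{pf:Wn:trig}
W_n(x)\=\bgg{\frac{R}{2}}^{\!n}\bgg{\cos(n\theta_x)+\frac{f(x)}{\sqrt{-\Delta(x)}}\sin(n\theta_x)}.
\end{equation}
Because $(R/2)^n>0$, the sign of $W_n(x)$ coincides with the sign of the bracketed factor.

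Next I would rewrite that bracket in a form that exhibits $\psi_x$ directly. The defining relation $\tan\psi_x=-\sqrt{-\Delta(x)}/f(x)$ from \cref{def:l0:10} is equivalent to $\cot\psi_x=-f(x)/\sqrt{-\Delta(x)}$, and an immediate expansion of $\sin(\phi-\psi_x)=\sin\phi\cos\psi_x-\cos\phi\sin\psi_x$ gives the identity
\begin{equation}\label{pf:identity}
\cos\phi+\frac{f(x)}{\sqrt{-\Delta(x)}}\sin\phi\=-\frac{\sin(\phi-\psi_x)}{\sin\psi_x},
\end{equation}
valid for every real $\phi$. Since $\psi_x\in(0,\pi)$ we have $\sin\psi_x>0$, so substituting $\phi=n\theta_x$ into \cref{pf:identity} and combining with \cref{pf:Wn:trig} shows that the sign of $W_n(x)$ is the opposite of the sign of $\sin(n\theta_x-\psi_x)$.

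To conclude, I would translate this sign statement into the geometric language of \cref{def:pv}. By that definition, the angle $n\theta_x$ lies to the left of the line $\psi_x$ exactly when $n\theta_x-\psi_x\in(2k\pi,(2k+1)\pi)$ for some integer $k$, i.e.\ when $\sin(n\theta_x-\psi_x)>0$, which is equivalent to $W_n(x)<0$. Symmetrically, lying to the right of $\psi_x$ corresponds to $\sin(n\theta_x-\psi_x)<0$ and hence to $W_n(x)>0$, and lying on $\psi_x$ to $\sin(n\theta_x-\psi_x)=0$ and hence to $W_n(x)=0$. The only mildly delicate point, and what I expect to be the main bookkeeping obstacle, is the degenerate case $f(x)=0$: the derivation of \cref{pf:identity} used $\cot\psi_x$, but in this case the convention $\psi_x=\pi/2$ forced by \cref{def:l0:10} still makes both sides of \cref{pf:identity} equal to $\cos\phi$, so the conclusion carries through unchanged.
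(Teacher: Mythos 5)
Your proof is correct, and it is cleaner than the one in the paper. Both start from \cref{lem:00} to obtain
\[
W_n(x)\=\bgg{\frac{R}{2}}^{\!n}\bgg{\cos(n\theta_x)+\frac{f(x)}{\sqrt{-\Delta(x)}}\sin(n\theta_x)},
\]
but from that point the routes diverge. The paper sets $h=f(x)/\sqrt{-\Delta(x)}$ and resolves the sign of the bracket by a triple case split on $\operatorname{sgn}(h)$, within each case doing a further split on $\operatorname{sgn}\cos(n\theta_x)$ and then computing the union of the resulting angular sets $S_1\cup S_2\cup S_3$ to recognize ``left of the line $\psi_x$.'' You instead observe that the defining relation $\tan\psi_x=-\sqrt{-\Delta(x)}/f(x)$ with $\psi_x\in(0,\pi)$, hence $\sin\psi_x>0$, collapses the bracket into the single identity
\[
\cos\phi+\frac{f(x)}{\sqrt{-\Delta(x)}}\sin\phi\=-\frac{\sin(\phi-\psi_x)}{\sin\psi_x},
\]
so that $\operatorname{sgn}W_n(x)=-\operatorname{sgn}\sin(n\theta_x-\psi_x)$, and the three parts of the statement read off directly from \cref{def:pv}. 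This buys you a shorter, case-free argument; you also correctly flag and dispose of the only delicate point, namely $f(x)=0$, where $\psi_x=\pi/2$ and the identity still holds by inspection. The one thing worth stating explicitly (you rely on it implicitly) is that the line $n\theta_x$ coincides with the line $\psi_x$ precisely when $n\theta_x-\psi_x\in\pi\Z$, which is the $\sin(n\theta_x-\psi_x)=0$ case; with that made explicit the proof is complete.
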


\begin{proof}
Since $\Delta(x)<0$, the complex number $A(x)+\sqrt{\Delta(x)}$ has the real part $A(x)$
and the imaginary part $\sqrt{-\Delta(x)}$. Therefore, by \cref{def:theta0:10} of the angle $\theta_{x}$,
we have
\[
A(x)+\sqrt{\Delta(x)}=Re^{i\theta_{x}},
\]
where $R=\sqrt{A^2(x)-\Delta(x)}$.
Let 
\[
h=\bigl((2-a)x-b\bigr)\big/\sqrt{-\Delta(x)},
\rmand
F=\cos(n\theta_{x})+h\cdotp\sin(n\theta_{x}).
\]
Since $\Delta(x)<0$, by \cref{lem:00}, we have 
\[
W_n(x)
\=\bgg{R\over2}^n\bgg{\cos\bigl(n\theta_{x}\bigr)+{(2-a)x-b\over \sqrt{-\Delta(x)}}\cdotp\sin\bigl(n\theta_{x}\bigr)}
\=\bgg{R\over2}^n F.
\]
Since $R>0$, we have $W_n(x)>0$ if and only if $F>0$.
On the other hand, in view of \cref{def:l0:10}, the line $\psi_x$ has slope $-1/h$.

If $h=0$, we have $\psi_x=\pi/2$.
In this case, we have $F=\cos(n\theta_{x})$, and thus, the above sign relation reduces to that $W_n(x)$ and $\cos\psi_x$
have the same sign. 
In other words, we have $W_n(x)>0$ if and only if
the angle $n\theta_{x}$ lies in the open right half-plane; and
$W_n(x)<0$ if and only if the angle $n\theta_{x}$ lies in the left open half-plane.
Consequently, we have $W_n(x)=0$ if and only if the line $n\theta_{x}$ coincides with the vertical line $\pi/2$.
This proves the desired relations.
Below we can suppose that $h\ne 0$.

Assume that $h>0$.
From the definition of the function~$F$, it is elementary to deduce the following equivalence relation
\begin{align*}
F>0
&\eqrl
\sin(n\theta_{x})>-{1\over h}\cos(n\theta_{x})\\
&\eqrl
\begin{cases}
\tan(n\theta_{x})>-1/h,&\text{if $\cos(n\theta_{x})>0$};\\[5pt]
\sin(n\theta_{x})>0,&\text{if $\cos(n\theta_{x})=0$};\\[5pt]
\tan(n\theta_{x})<-1/h,&\text{if $\cos(n\theta_{x})<0$}.
\end{cases}
\end{align*}
Therefore, we have $W_n(x)>0$ if and only if the angle $n\theta_{x}$ belongs to the set $S_1\cup S_2\cup S_3$, where
\begin{align*}
S_1&
\=\{\psi\in\R\colon\,\cos\psi>0\text{ and }\tan\psi>-1/h\}
\=\cup_{k\in\Z}(\psi_x-\pi+2k\pi,\,\pi/2+2k\pi),\\[5pt]
S_2&
\=\{\psi\in\R\colon\,\cos\psi=0\text{ and }\sin\psi>0\}
\=\cup_{k\in\Z}\{\pi/2+2k\pi\},\\[5pt]
S_3&
\=\{\psi\in\R\colon\,\cos\psi<0\text{ and }\tan\psi<-1/h\}
\=\cup_{k\in\Z}(\pi/2+2k\pi,\,\psi_x+2k\pi).
\end{align*}
It is routine to deduce their union, which is
\[
S_1\cup S_2\cup S_3
=\cup_{k\in\Z}(\psi_x+(2k-1)\pi,\,\psi_x+2k\pi).
\]
From \cref{def:pv}, we see that $W_n(x)>0$ if and only if the angle $n\theta_x$ lies to the right of the line~$\psi_x$.
By symmetry, we have $W_n(x)<0$ if and only if the angle~$n\theta_x$ lies to the left of the line~$\psi_x$. 
It follows that $W_n(x)=0$ if and only if the angle~$n\theta_x$ lies on the line~$\psi_x$.

When $h<0$, we have the following equivalence relation in the same vein:
\[
F>0
\eqrl
\begin{cases}
\tan(n\theta_{x})<-1/h,&\text{if $\cos(n\theta_{x})>0$};\\[5pt]
\sin(n\theta_{x})<0,&\text{if $\cos(n\theta_{x})=0$};\\[5pt]
\tan(n\theta_{x})>-1/h,&\text{if $\cos(n\theta_{x})<0$}.
\end{cases}
\]
In the same way we can find the same desired relations. This completes the proof.
\end{proof}

Now we are ready to show that the number $x_\Delta^+$ is a limit point.

\begin{thm}\label{thm:xd2:10}
Let $\{W_n(x)\}_{n\ge0}$ be a \oz, with ordered zero-set 
\[
\{\xi_{n,1},\,\xi_{n,2},\,\ldots,\,\xi_{n,n}\}.
\]  
Then we have 
$\xi_{n,\,n+1-i}\nearrow x_\Delta^+$ as $n\to\infty$,
for all $i\ge 2$ if $f(x_\Delta^+)<0$, and for all $i\ge1$ otherwise.
\end{thm}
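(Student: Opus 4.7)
The plan is to first establish convergence of $\xi_{n,n+1-i}$ to some limit $L_i\le x_\Delta^+$ by monotonicity plus boundedness, and then rule out $L_i<x_\Delta^+$ by counting zeros of $W_n(x)$ near $x_\Delta^+$. The interlacing $R_{n+1}\bowtie R_n$ from \cref{thm:RR:10} gives $\xi_{n,n+1-i}<\xi_{n+1,n+2-i}$, so the sequence is strictly increasing in $n$. For the upper bound: when $f(x_\Delta^+)\ge0$, \cref{thm:RR:10} places all of $R_n$ inside $J_0=J_\Delta=(x_\Delta^-,x_\Delta^+)$, which handles every $i\ge1$; when $f(x_\Delta^+)<0$, part~(ii) of \cref{thm:bound:10} gives $\xi_{n,n-1}<x_\Delta^+$ for large~$n$, which suffices for all $i\ge2$. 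Monotone convergence then yields a limit $L_i\le x_\Delta^+$, and it remains to show $L_i=x_\Delta^+$.

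The plan is to suppose $L_i<x_\Delta^+$ and derive a contradiction. Choose $x_0$ with $\max(L_i,-b/a)<x_0<x_\Delta^+$. By the definition of $L_i$, for all large~$n$ at most $i-1$ roots of $W_n(x)$ exceed $x_0$, hence at most $i-1$ lie in $(x_0,x_\Delta^+)$. The goal will then be to show that the number of zeros of $W_n(x)$ in $(x_0,x_\Delta^+)$ grows without bound in $n$.

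On $(x_0,x_\Delta^+)$ we have $\Delta(x)<0$ and $A(x)>0$, so \cref{thm:sgn:d<0:10} applies and the zeros of $W_n(x)$ correspond to the $x$ at which $n\theta_x$ lies on the line $\psi_x$. Using $\Delta'(x)=2aA(x)$, a direct differentiation gives
\[
(\tan\theta_x)'\={4ad\over A^2(x)\sqrt{-\Delta(x)}}\<0,
\]
so $\theta_x$ is strictly decreasing on $(-b/a,\,x_\Delta^+)$, with $\theta_{x_0}>0$ and $\theta_x\to 0$ as $x\to x_\Delta^+$. Meanwhile $\psi_x$ is continuous on $(x_0,\,x_\Delta^+)$ (it merely passes through $\pi/2$ wherever $f(x)=0$) and extends to a value in $[\,0,\,\pi\,]$ at $x_\Delta^+$. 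Therefore the continuous function $\phi(x)=n\theta_x-\psi_x$ satisfies $|\phi(x_0)-\phi(x_\Delta^+)|\ge n\theta_{x_0}-\pi$; by the intermediate value theorem $\phi$ hits at least $\lfloor n\theta_{x_0}/\pi\rfloor-2$ integer multiples of~$\pi$ inside $(x_0,\,x_\Delta^+)$, each producing a zero of $W_n(x)$ there. For $n$ sufficiently large this exceeds $i-1$, the desired contradiction.

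The main obstacle will be making the counting step rigorous, because $\psi_x$ itself varies with $x$ and may approach $0$ or $\pi$ as $x\to x_\Delta^+$ depending on the sign of $f(x_\Delta^+)$, so one cannot simply count level sets of $n\theta_x$. The cleanest way is to work with $\phi(x)=n\theta_x-\psi_x$ and to leverage that $\psi_x$ stays bounded in $[\,0,\pi\,]$ while $n\theta_{x_0}$ is unbounded in~$n$. A minor technicality is to verify that $\psi_x$ is genuinely continuous across zeros of $f(x)$, which holds because $\psi_x\in(0,\pi)$ is an angle and passes smoothly through $\pi/2$ where $\tan\psi_x$ has a pole.
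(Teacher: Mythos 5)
Your proof is correct, but it takes a genuinely different route from the paper's. The paper fixes the hypothetical limit $\ell_i<x_\Delta^+$, counts the (finitely many) roots of $W_n(x)$ in $(\ell_i,\,x_\Delta^+)$, and combines the intermediate value theorem with the sign of $W_n(x_\Delta^+)$ (from \cref{W:xd}, using $A(x_\Delta^+)>0$ and the sign of $f(x_\Delta^+)$) to deduce that $W_n(\ell_i)(-1)^i<0$ for all large~$n$; it then invokes \cref{thm:sgn:d<0:10} to translate this constant sign into the statement that $n\theta_{\ell_i}$ eventually stays on one side of the line $\psi_{\ell_i}$, which is impossible because $\theta_{\ell_i}\in(0,\pi)$. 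This is a sign-constancy argument localized at the single point $\ell_i$. You instead work on the whole interval $(x_0,x_\Delta^+)$ and count zeros there directly: since $\theta_x\to0$ as $x\to x_\Delta^+$ while $\psi_x$ stays bounded in $[0,\pi]$, the continuous function $\phi(x)=n\theta_x-\psi_x$ traverses an interval of length at least $n\theta_{x_0}-\mathrm{const}$, so it meets unboundedly many integer multiples of $\pi$ as $n\to\infty$, each one giving a root of $W_n$ by \cref{thm:sgn:d<0:10}; this contradicts the bound of $i-1$ roots beyond $x_0$. Your approach is a Sturm-like winding count; it has the virtue of making the \emph{mechanism} of accumulation at $x_\Delta^+$ visible ($\theta_x$ collapses to $0$ there, forcing $\phi$ to oscillate), it avoids computing the sign of $W_n(x_\Delta^+)$, and it sidesteps the terse ``impossible because $\theta_{\ell_i}\in(0,\pi)$'' step of the paper, whose full justification (pigeonhole for $\theta\in\pi\Q$, density otherwise) the paper leaves implicit in this theorem. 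The paper's argument, in turn, is slightly shorter and parallels the structure used for \cref{thm:xd1:10}, where the sign of $W_n(\ell_i)$ alternates and the more delicate \cref{lem:side:10} is needed. Both are sound; your version does need (as you flag) a careful statement that $\psi_x$ extends continuously to $[x_0,x_\Delta^+]$ with values in $[0,\pi]$ (including through any zero of $f$, where $\psi_x$ passes smoothly through $\pi/2$, and at $x_\Delta^+$ itself, where it tends to $0$, $\pi$, or $\pi/2$ according to the sign of $f(x_\Delta^+)$), but those checks all go through.
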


\begin{proof}
Let $i\ge 1$. From the property $R_{n+1}\bowtie R_n$ obtained in \cref{thm:RR:10},
we see that the sequence~$\xi_{n,\,n+1-i}$ ($n\ge i$) increases as $n\to\infty$. 
Since all the roots are bounded by the interval $J_0$, the sequence $\xi_{n,\,n+1-i}$ converges.
Suppose that $\lim_{n\to\infty}\xi_{n,\,n+1-i}=\ell_i$.

Suppose that $f(x_\Delta^+)<0$ and $i\ge2$.
From \cref{thm:bound:10}~(ii), we see that $\xi_{n,\,n+1-i}<x_\Delta^+$ for large~$n$,
which implies that $\ell_i\le x_\Delta^+$. Suppose, to the contrary, that $\ell_i<x_\Delta^+$. 
When $n$ is large, the polynomial $W_n(x)$ has exactly $i-2$ distinct roots in the interval $(\ell_i,\,x_\Delta^+)$,
that is, the roots $\xi_{n,\,n+2-i}$, $\xi_{n,\,n+3-i}$, $\ldots$, $\xi_{n,\,n-1}$.
Thus, by the intermediate value theorem, we infer that 
\[
W_n(\ell_i)W_n(x_\Delta^+)(-1)^{i-2}>0\qquad\text{for large $n$}.
\]
On the other hand, we have $A(x_\Delta^+)>0$ by \cref{sgn:A:xd:10}.
Since $f(x_\Delta^+)<0$, by \cref{W:xd}, we infer that $W_n(x_\Delta^+)<0$ for large~$n$.
Multiplying it with the above inequality, we obtain that
\begin{equation}\label{sgn:W:ell:10}
W_n(\ell_i)(-1)^{i}<0\qquad\text{for large~$n$}.
\end{equation}
Since $\ell_i\in J_\Delta$, we have $\Delta(\ell_i)<0$.
By \cref{thm:sgn:d<0:10}, there is an integer $M$ such that for all integers $n>M$,
the angle~$n\theta_{\ell_i}$ lies on the same side of the line~$\psi_{\ell_i}$.
This is impossible, because $\theta_{\ell_i}\in(0,\,\pi)$.
Hence, we have $\ell_i=x_\Delta^+$.

Now suppose that $f(x_\Delta^+)\ge 0$ and $i\ge1$.
From \cref{thm:RR:10}, we see that $R_n\subset J_0=J_\Delta$.
Thus we have $\xi_{n,\,n+1-i}<x_\Delta^+$ for large~$n$, which implies that $\ell_i\le x_\Delta^+$. 
Suppose, to the contrary, that $\ell_i<x_\Delta^+$. 
The polynomial $W_n(x)$ has exactly $i-1$ roots in the interval $(\ell_i,\,x_\Delta^+)$ for large $n$,
that is, the roots $\xi_{n,\,n+2-i}$, $\xi_{n,\,n+3-i}$, $\ldots$, $\xi_{n,\,n}$.
Thus, we get 
\[
W_n(\ell_i)W_n(x_\Delta^+)(-1)^{i-1}>0\qquad\text{for large $n$}.
\]
On the other hand, since $A(x_\Delta^+)>0$ and $f(x_\Delta^+)\ge 0$, 
we have $W_n(x_\Delta^+)>0$ for all $n$. Multiplying it with the above inequality, we obtain \cref{sgn:W:ell:10} again, 
which is absurd for the same reason.
This completes the proof.
\end{proof}

Applying the same idea to show that the number $x_\Delta^-$ is also a limit point, 
we find that the angles $n\theta_{\ell_i}$ reside on both sides of the line~$\psi_{\ell_i}$, but alternatively. 
This leads us to show that the alternation is impossible.
The next two lemmas serves for this aim, depending on the rationality of the number~$\theta_{x}$.
Let 
$\pi\Q=\{q\pi \colon q\in\Q\}$.

\begin{lem}\label{lem:Q:10}
Let $\theta=q\pi/p$, where $p$ is a positive integer, $q$ is an integer, and $(p,q)=1$.
Then the sequence $\{\pv(n\theta)\}_{n\ge1}$ is periodic,
with the minimum period 
\begin{equation}\label{def:p0}
p_0=\begin{cases}
p,&\text{if $q$ is even};\\[3pt]
2p,&\text{if $q$ is odd}.
\end{cases}
\end{equation}
Moreover, we have 
\begin{equation}\label{set:pv:10}
\{\pv(n\theta)\colon\,n\in[p_0]\}
\=\{2j\pi/p_0\colon\,j\in[\,0,\,p_0-1]\}.
\end{equation}
\end{lem}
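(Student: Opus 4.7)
The plan is to reduce everything to elementary modular arithmetic on the residue sequence $r_n \= nq \bmod 2p$. Since $\theta/(2\pi)=q/(2p)$, the definition of principal value gives
\[
\pv(n\theta) \= \frac{r_n\,\pi}{p},
\]
so the sequence $\{\pv(n\theta)\}_{n\ge 1}$ is entirely encoded by $\{r_n\}_{n\ge 1}$ up to the scalar $\pi/p$, and two terms coincide, $\pv(n\theta) = \pv(m\theta)$, if and only if $2p\mid(n-m)q$.

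For the periodicity claim, the minimum period $p_0$ is therefore the least positive integer $N$ with $2p\mid Nq$. I would split on the parity of $q$, using $\gcd(p,q)=1$ throughout. When $q$ is odd, $\gcd(q,2p)=1$, so $2p\mid Nq$ forces $2p\mid N$, giving $p_0=2p$. When $q$ is even, $p$ must be odd, and writing $q=2q'$ yields $\gcd(p,q')=1$, so $2p\mid Nq=2Nq'$ reduces to $p\mid Nq'$, which forces $p\mid N$ and gives $p_0=p$. This proves the formula for $p_0$.

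For the set identity, the minimality of $p_0$ ensures that $\{r_n:n\in[p_0]\}$ consists of exactly $p_0$ distinct residues modulo $2p$. In the odd-$q$ case, $q$ is a unit modulo $2p$, so these residues exhaust $[0,2p-1]$; converting via $\pv(n\theta)=r_n\pi/p$ and using $p_0=2p$ gives
\[
\{\pv(n\theta):n\in[p_0]\} \= \{j\pi/p : j\in[0,2p-1]\} \= \{2j\pi/p_0 : j\in[0,p_0-1]\}.
\]
In the even-$q$ case, $r_n = 2(nq'\bmod p)$ is always even, and since $q'$ is a unit modulo $p$ the inner residues exhaust $[0,p-1]$ as $n$ ranges over $[p]$; hence $\{\pv(n\theta):n\in[p_0]\}=\{2j\pi/p : j\in[0,p-1]\}$, which again matches $\{2j\pi/p_0 : j\in[0,p_0-1]\}$ with $p_0=p$.

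The whole argument is elementary, requiring no more than the intermediate step that identifies $\pv$-values with residues. The only bookkeeping that requires care is the parity case-split used to convert $\gcd(p,q)=1$ into coprimality with $2p$ in one case and with $p$ (via $q'$) in the other; there is no serious obstacle.
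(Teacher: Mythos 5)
Your proof is correct and takes essentially the same approach as the paper: both reduce $\pv(n\theta)$ to the residue $nq \bmod 2p$ and split on the parity of $q$, using $\gcd(p,q)=1$ to deduce coprimality with $2p$ or (via $q'=q/2$) with $p$. Your framing of $p_0$ as the least $N$ with $2p\mid Nq$, then solving for it, is a slightly cleaner organization than the paper's (which posits $p_0$, verifies it is a period, and then argues minimality via pairwise distinctness of $\pv(n\theta)$, $n\in[p_0]$), but the underlying number theory is identical.
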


\begin{proof}
See \cref{apd:Q}.
\end{proof}

\begin{lem}\label{lem:NQ:10}
Let $\theta\in\R\backslash\pi\Q$.
Then for any nonempty open interval $I\subset(0,\,2\pi)$, there exists an arbitrarily large integer~$m$ such that $\pv(m\theta)\in I$.
\end{lem}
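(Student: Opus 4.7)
The plan is to apply Dirichlet's approximation theorem (the tool flagged in the abstract) to the irrational number $\theta/(2\pi)$, and then bootstrap from a single small step to full density on the circle $\R/(2\pi\Z)$. Since $\theta\notin\pi\Q$, the ratio $\theta/(2\pi)$ is irrational, so Dirichlet's theorem supplies, for every positive integer $N$, integers $p$ and $q$ with $1\le q\le N$ such that
\[
\left|\frac{q\theta}{2\pi}-p\right|<\frac{1}{N}, \qquad\text{equivalently}\qquad |q\theta-2\pi p|<\frac{2\pi}{N}.
\]
This pins $\pv(q\theta)$ inside $(0,\,2\pi/N)\cup(2\pi-2\pi/N,\,2\pi)$; the endpoint $0$ is excluded by irrationality of $\theta/(2\pi)$. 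Thus I get a nonzero element of the orbit that is arbitrarily close to $0$ on the circle.

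Next I would establish density of $\{\pv(n\theta):n\ge 1\}$ in $(0,2\pi)$. Given a nonempty open interval $I\subset(0,2\pi)$ of length $L$, choose $N$ so large that $2\pi/N<L$, and let $q=q(N)$ be as above. On $\R/(2\pi\Z)$, the iterates $\pv(q\theta),\pv(2q\theta),\pv(3q\theta),\ldots$ form an arithmetic progression whose step has length $<L$ on the circle (regardless of whether $\pv(q\theta)$ sits near $0$ or near $2\pi$, the step just runs around in the opposite direction). Within one full revolution — at most $N$ steps — such a progression must fall inside any arc of length $L$, and in particular inside $I$. This produces some $m=kq$ with $\pv(m\theta)\in I$.

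To upgrade density to \emph{arbitrarily large} $m$, I would argue by contradiction. Suppose only finitely many positive integers $m$ satisfy $\pv(m\theta)\in I$, and let $m_0$ be the largest such. The set $E=\{\pv(m\theta):1\le m\le m_0\}$ is finite, so $I\setminus E$ still contains a nonempty open subinterval $I'\subset I$. Applying the density argument already obtained to $I'$ yields some $m\ge 1$ with $\pv(m\theta)\in I'\subset I$, which forces $m>m_0$ and contradicts the maximality of $m_0$. Hence the set of admissible $m$ is infinite, and in particular unbounded.

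The main obstacle is the bookkeeping on the circle: one has to be honest that Dirichlet gives $\pv(q\theta)$ close to $0$ \emph{or} close to $2\pi$ rather than necessarily in $(0,2\pi/N)$, and check that the progression $k\mapsto\pv(kq\theta)$ really does sweep the circle in small enough steps in either case. Everything else is routine once the single small Dirichlet step is secured, and no new ingredient beyond Dirichlet's theorem is required — in particular there is no need to invoke the full strength of Weyl equidistribution.
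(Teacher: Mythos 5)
Your proof is correct and rests on the same engine the paper uses: Dirichlet's approximation theorem applied to $\theta/(2\pi)$ to manufacture a positive integer $q$ with $\pv(q\theta)$ arbitrarily close to $0$ (or to $2\pi$, equivalently a small negative step), followed by the observation that an arithmetic progression with step smaller than $|I|$ must land in $I$ on its first pass around the circle. Where you diverge from the paper is in how you extract \emph{arbitrarily large} $m$. The paper's appendix proves a genuinely stronger statement (Proposition~\ref{prop:NQ}): by shrinking the step to $(v-u)/(M+1)$ it produces $M$ \emph{consecutive} multiples $(m+1)q\theta,\ldots,(m+M)q\theta$ all landing in $I$, and then "arbitrarily large" follows by letting $M\to\infty$. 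You instead prove bare density (one hit) and bootstrap by contradiction: if only finitely many $m$ worked, delete the finite exceptional set from $I$ and rerun density on a residual subinterval. Both routes are sound; yours is more economical for the lemma as stated, while the paper's strengthening is more structural. One small bookkeeping slip worth fixing: "within one full revolution — at most $N$ steps" is backwards. Dirichlet gives an upper bound $\pv(q\theta)<2\pi/N$ but no lower bound, so a full revolution may take far \emph{more} than $N$ steps; the correct count is $\lceil 2\pi/\pv(q\theta)\rceil$, which is finite and is all you actually need. This doesn't affect the argument, since the only thing used is that successive points of the orbit on a single pass are spaced by less than $|I|$.
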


\begin{proof}
See \cref{apd:NQ} for a proof by using Dirichlet's approximation theorem.
\end{proof}

By using the above two lemmas, we can show the impossibility of the aforementioned alternation.

\begin{lem}\label{lem:side:10}
Let~$x\in \R$ such that $\Delta(x)<0$. Let $M>0$.
Suppose that the lines $n\theta_{x}$ and $\psi_{x}$ do not coincide with each other for all $n>M$.
Then there exists an arbitrarily large integer~$n$ such that 
the angles~$n\theta_{x}$ and $(n+1)\theta_{x}$ lie on the same side of the line~$\psi_{x}$.
\end{lem}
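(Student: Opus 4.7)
The plan is to argue by contradiction: suppose there exists $N\ge M$ such that for every $n>N$, the angles $n\theta_x$ and $(n+1)\theta_x$ lie on opposite sides of the line $\psi_x$. Write $\alpha_n=\pv(n\theta_x)$ and let $H=(\psi_x,\,\psi_x+\pi)$ denote the open half-arc to the left of the line $\psi_x$. Since the hypothesis guarantees $\alpha_n\notin\{\psi_x,\psi_x+\pi\}$ for all $n>M$, the membership indicator $\alpha_n\in H$ strictly alternates for $n>N$. The argument then splits on whether $\theta_x\in\pi\Q$.

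If $\theta_x\notin\pi\Q$, I apply \cref{lem:NQ:10} to the interval $I=(\psi_x,\,\psi_x+\pi-\theta_x)$, which is a nonempty open subset of $(0,2\pi)$ since $\psi_x,\theta_x\in(0,\pi)$. This produces arbitrarily large $n$ with $\alpha_n\in I$; writing $\alpha_n=\psi_x+s$ for $s\in(0,\pi-\theta_x)$ gives $\alpha_{n+1}\equiv\psi_x+s+\theta_x\pmod{2\pi}$ with $s+\theta_x\in(0,\pi)$, so $\alpha_{n+1}\in H$ as well, contradicting the alternation.

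If $\theta_x\in\pi\Q$, write $\theta_x=q\pi/p$ in lowest terms and let $p_0$ be the minimum period from \cref{lem:Q:10}. The map $n\mapsto\mathbf{1}_{\alpha_n\in H}$ has period $p_0$, and by the alternation it also has period $2$, hence period $\gcd(p_0,2)$. If $p_0$ is odd this forces the map to be constant, contradicting alternation. If $p_0$ is even, then $q$ is odd and $p_0=2p$ by \cref{lem:Q:10}; identifying orbit points with residues via $2j\pi/p_0\leftrightarrow j\in\Z/p_0\Z$, the angle $\alpha_n$ sits at position $nq\pmod{p_0}$. The odd-indexed $\alpha_n$ (those in $H$, say) occupy positions $\{jq\pmod{p_0}:j\text{ odd}\}$, and since $q$ is odd and $p_0$ is even, multiplication by $q$ preserves parity modulo $p_0$, so this set is exactly the odd residues $\{1,3,\ldots,p_0-1\}$. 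On the other hand, because the orbit is evenly spaced and avoids both $\psi_x$ and $\psi_x+\pi$, the intersection $H\cap\text{orbit}$ consists of $p_0/2=p$ consecutive positions, i.e., a cyclic arc in $\Z/p_0\Z$. For $p\ge2$ any such arc contains two consecutive integers and hence both parities, so it cannot coincide with the odd residues; the case $p=1$ (i.e., $p_0=2$) is excluded because $\theta_x\in(0,\pi)$ with $q$ odd and coprime to $p$ forces $p\ge2$. This contradiction completes the proof.

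The main obstacle is the rational case with $p_0$ even: a bare parity argument no longer suffices there, and one must exploit the geometric fact that $H\cap\text{orbit}$ forms a set of consecutive residues (a cyclic arc) in $\Z/p_0\Z$, which cannot coincide with the non-consecutive set of odd residues.
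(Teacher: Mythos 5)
Your proof is correct, and while the irrational case follows essentially the same route as the paper (apply \cref{lem:NQ:10} to a short interval just past $\psi_x$; your choice $I=(\psi_x,\psi_x+\pi-\theta_x)$ is in fact a little cleaner than the paper's $\epsilon=\min((\pi-\theta_x)/2,\,\pi-\psi_x/2)$), the rational case is handled by a genuinely different argument. The paper proceeds constructively: it locates the orbit point $\pv(n_0\theta_x)$ nearest to $\psi_x$, bounds its distance by $\pi/p_0$, and then uses the inequality $\pi/p_0+\theta_x\le\pi$ to show directly that $n_0\theta_x$ and one of its neighbours ($n_0\pm1$) land on the same side, finishing by periodicity. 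You instead argue by contradiction against eventual alternation: periodicity with period $p_0$ together with alternation forces $p_0$ even, hence $q$ odd and $p_0=2p$; then you compare two descriptions of the $p_0/2$ orbit positions inside $H$ — on the one hand the parity class $\{jq\bmod p_0:j\text{ odd}\}$, which equals the odd residues because multiplication by the odd unit $q$ preserves parity mod the even number $p_0$, and on the other hand a cyclic run of $p=p_0/2$ consecutive residues (since the orbit is equally spaced and avoids $\psi_x$ and $\psi_x+\pi$); for $p\ge2$ a run of consecutive residues contains both parities, and $p=1$ is impossible since $\theta_x\in(0,\pi)$. The paper's approach is more elementary and local, producing an explicit witness $n_0$, while yours is a global combinatorial argument that makes the role of the parity structure of the orbit more transparent. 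Both are sound; one minor point worth making explicit in a write-up is that the hypothesis "lines $n\theta_x$ and $\psi_x$ do not coincide for $n>M$" propagates by $p_0$-periodicity to the entire orbit, which is what justifies both the strict alternation at all indices and the claim that neither $\psi_x$ nor $\psi_x+\pi$ is an orbit point.
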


\begin{proof}
Recall from \cref{ntt:theta:psi:10} that 
\begin{equation}\label[rl]{range:theta:psi:10}
\theta_x\in(0,\pi)
\rmand
\psi_x\in(0,\pi).
\end{equation} 

Assume that $\theta_{x}\notin\pi\Q$. Let $I=(\psi_x,\,\psi_x+\epsilon)$, where
\[
\epsilon=\min((\pi-\theta_{x})/2,\,\pi-\psi_x/2)\in(0,\,\pi/2).
\]
Using $\epsilon\le\pi-\psi_x/2$, we deduce that $I\subset(0,\,2\pi)$.
By \cref{lem:NQ:10} and using $\epsilon\le(\pi-\theta_{x})/2$, we infer that there is an arbitrarily large integer $n$ such that 
\[
\pv(n\theta_{x})\in I\subseteq(\psi_x,\,\psi_x+(\pi-\theta_{x})/2).
\]
Together with \cref{range:theta:psi:10},
we deduce that
\[
\pv(n\theta_{x})+\theta_{x}
\in(\psi_x,\,\psi_x+(\pi-\theta_{x})/2+\theta_{x})
\subset(\psi_x,\,\psi_x+\pi),
\]
which implies that the angle $(n+1)\theta_{x}$ lies to the left of the line $\psi_x$.
On the other hand, by using $\epsilon\le \pi-\psi_x/2$, we derive that 
$\pv(n\theta_{x})\in I\subset(\psi_x,\,\psi_x+\pi)$, which implies that the angle $n\theta_{x}$ also lies to the left of the line $\psi_x$.

Now suppose that $\theta_{x}\in\pi\Q$. By \cref{range:theta:psi:10}, we can denote $\theta_{x}=q\pi/p$,
where $p\in\Z^+$, $q\in[p-1]$, and $(p,q)=1$. It follows that 
\begin{equation}\label[ineq]{pf2:Q}
\pi/p_0+\theta_{x}
\=\pi/p_0+q\pi/p
\;\le\; 
\pi/p_0+(p-1)\pi/p
\;\le\; 
\pi.
\end{equation}
We will use the above inequality in the sequel.

In the polar coordinate system, the $p_0$ rays $2j\pi/p_0$ ($j\in[\,0,\,p_0-1]$)
partition the full circle equally into~$p_0$ angles of size $2\pi/p_0$.
Note that the lines $n\theta_{x}$ and $\psi_x$ do not coincide for $n>M$.
By \cref{set:pv:10}, the set $\{\pv(n\theta_x)\colon\,n\in\Z\}$ of lines is finite, which does not contain the line $\psi_x$.
Therefore, the minimum angle among the angles formed by the line $\phi_x$ and one of the above rays 
is of size at most a half of the size $2\pi/p_0$.
In other words, there is an integer $j_0\in [\,0,\,p_0-1)$ such that
\[
|2j_0\pi/p_0-\psi_x|\in(0,\,\pi/p_0].
\]
By \cref{set:pv:10}, we can suppose that $\pv(n_0\theta_{x})=2j_0\pi/p_0$, where $n_0\in[p_0]$.
Then the above range relation can be rewritten as 
\begin{equation}\label{pf1:range:10}
|\pv(n_0\theta_{x})-\psi_x|\in(0,\,\pi/p_0].
\end{equation}

If $\pv(n_0\theta_{x})-\psi_x>0$, then the above range relation gives that 
\[
\pv(n_0\theta_{x})\in(\psi_x,\,\psi_x+\pi/p_0]\subset(\psi_x,\,\psi_x+\pi),
\]
which implies that the angle $n_0\theta_{x}$ lies to the left of the line $\psi_x$.
By the above relation, and using \cref{range:theta:psi:10,pf2:Q}, we infer that
\[
\pv(n_0\theta_{x})+\theta_{x}
\in(\psi_x+\theta_x,\,\psi_x+\pi/p_0+\theta_{x}]
\subset(\psi_x,\,\psi_x+\pi],
\]
which implies that the angle $(n_0+1)\theta_x$ lies to the left of the line $\psi_x$,
or on the line $\psi_x$. The latter possibility never occurs
since the lines $\psi_x$ and $n\theta_x$ do not coincide for any $n$.
Now, by the periodicity obtained in \cref{lem:Q:10}, 
we see that there is an arbitrarily large integer~$n$ such that $\pv(n\theta_x)=\pv(n_0\theta_x)$,
and thus, both the angles~$n\theta_{x}$ and $(n+1)\theta_{x}$ lie to the left of the line~$\psi_{x}$.

Otherwise $\pv(n_0\theta_{x})-\psi_x<0$. Then \cref{pf1:range:10} gives that 
\begin{equation}\label[rl]{range2:pv:ntheta}
\pv(n_0\theta_{x})\in[\psi_x-\pi/p_0,\,\psi_x).
\end{equation}
Since $\pv(\psi)\in[\,0,\,2\pi)$ for all $\psi\in\R$, the above relation implies that $\pv(n_0\theta_{x})\in[\,0,\,\psi_x)$.
By \cref{def:pv}, we infer that the angle $n_0\theta_{x}$ lies to the right of the line $\psi_x$.
On the other hand, by \cref{range2:pv:ntheta,pf2:Q}, we infer that
\[
\pv(n_0\theta_{x})-\theta_x
\in[\psi_x-\pi/p_0-\theta_{x},\,\psi_x-\theta_{x})
\subset[\psi_x-\pi,\,\psi_x),
\]
which implies that the angle $(n_0-1)\theta_x$ lies to the right of the line $\psi_x$, or on the line $\psi_x$.
For the same reason, the second circumstance does not happen.
Hence, by the periodicity, there is an arbitrarily large integer~$n$ such that 
the angles~$n\theta_{x}$ and $(n+1)\theta_{x}$ lie to the right of the line~$\psi_{x}$.
This completes the proof.
\end{proof}

Now we are in a position to justify that the number $x_\Delta^-$ is a limit point.

\begin{thm}\label{thm:xd1:10}
Let $\{W_n(x)\}_{n\ge0}$ be a \oz, with ordered zero-set 
\[
\{\xi_{n,1},\,\xi_{n,2},\,\ldots,\,\xi_{n,n}\}.
\] 
Then we have 
$\xi_{n,i}\searrow x_\Delta^-$ for all $i\ge2$ if $f(x_\Delta^-)>0$, and for all $i\ge1$ otherwise.
\end{thm}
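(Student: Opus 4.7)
The plan is to imitate the proof of \cref{thm:xd2:10}, carried out by induction on $i$, with the new ingredient foreshadowed just before \cref{lem:side:10}: since $A(x_\Delta^-)<0$, the sign of $W_n(x_\Delta^-)$ alternates with the parity of $n$, which forces an alternation of $\mathrm{sgn}\,W_n(\ell_i)$ that is precisely what \cref{lem:side:10} forbids. First I establish convergence. The interlacing $R_{n+1}\bowtie R_n$ from \cref{thm:RR:10} forces the sequence $\xi_{n,i}$ to be strictly decreasing in $n$. When $f(x_\Delta^-)>0$ and $i\ge2$, \cref{thm:bound:10}~(i) gives $\xi_{n,i}\ge\xi_{n,2}>x_\Delta^-$ for large $n$; when $f(x_\Delta^-)\le0$, the definition of $J_0$ in \cref{ntt:J0:10} has $\alpha=x_\Delta^-$, so $\xi_{n,i}>x_\Delta^-$ directly for $i\ge1$. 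Either way, $\xi_{n,i}$ converges to some $\ell_i\ge x_\Delta^-$, and I must show $\ell_i=x_\Delta^-$.

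Supposing inductively that $\ell_j=x_\Delta^-$ for every admissible $j<i$, I assume for contradiction that $\ell_i>x_\Delta^-$. I first verify $\ell_i<x_\Delta^+$: for $n\ge i+1$ monotonicity gives $\xi_{n,i}\le\xi_{i+1,i}$, and \cref{thm:xd2:10} applied with index $j=2$ shows that $\xi_{i+1,i}$, being the second-largest root of $W_{i+1}$, lies strictly below $x_\Delta^+$. Hence $\ell_i\in J_\Delta$ and $\Delta(\ell_i)<0$. By the inductive hypothesis, $\xi_{n,j}\to x_\Delta^-<\ell_i$ for every admissible $j<i$, so for large $n$ the polynomial $W_n$ has exactly $\kappa$ roots in $(x_\Delta^-,\ell_i)$, where $\kappa=i-2$ in the first case (the roots $\xi_{n,2},\ldots,\xi_{n,i-1}$) and $\kappa=i-1$ in the second (the roots $\xi_{n,1},\ldots,\xi_{n,i-1}$). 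The intermediate value theorem then yields
\[
W_n(x_\Delta^-)\,W_n(\ell_i)\,(-1)^{\kappa}\>0.
\]

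Next I read off the sign of $W_n(x_\Delta^-)$ from \cref{W:xd} and \cref{sgn:A:xd:10}. Since $A(x_\Delta^-)<0$, the factor $(A(x_\Delta^-)/2)^n$ carries sign $(-1)^n$, while the linear factor $1+n\,f(x_\Delta^-)/A(x_\Delta^-)$ has an eventually constant sign in $n$ (positive when $f(x_\Delta^-)\le0$, negative when $f(x_\Delta^-)>0$ and $n>n^-$). Hence $\mathrm{sgn}\,W_n(x_\Delta^-)=(-1)^{n+c}$ for large $n$ with a constant $c\in\{0,1\}$. Feeding this into the displayed inequality, $\mathrm{sgn}\,W_n(\ell_i)$ also alternates with $n$ for all sufficiently large $n$. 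By \cref{thm:sgn:d<0:10}, the angle $n\theta_{\ell_i}$ thus alternates between the two sides of the line $\psi_{\ell_i}$. But \cref{lem:side:10} supplies arbitrarily large $n$ for which $n\theta_{\ell_i}$ and $(n+1)\theta_{\ell_i}$ lie on the \emph{same} side of $\psi_{\ell_i}$, contradicting the alternation. Therefore $\ell_i=x_\Delta^-$, completing the induction.

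The main obstacle is the sign bookkeeping across the two sub-cases of $f(x_\Delta^-)$: the root count $\kappa$ in the interior and the sign of the linear factor of \cref{W:xd} both shift, but the two shifts combine so that $\mathrm{sgn}\,W_n(\ell_i)$ alternates with $n$ in either case, which is what \cref{lem:side:10} rules out. Secondary technicalities are (a) verifying $\ell_i<x_\Delta^+$, handled by a clean bootstrap from the already-proved \cref{thm:xd2:10}, and (b) arranging the induction so that the root count in $(x_\Delta^-,\ell_i)$ is known exactly from the smaller-index case.
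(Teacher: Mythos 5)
Your proof is correct and follows the same route as the paper's: count roots in $(x_\Delta^-,\ell_i)$ via the intermediate value theorem, read off the sign of $W_n(x_\Delta^-)$ from \cref{W:xd} and \cref{sgn:A:xd:10}, deduce that $\mathrm{sgn}\,W_n(\ell_i)$ alternates with $n$, interpret this through \cref{thm:sgn:d<0:10} as the angle $n\theta_{\ell_i}$ alternating sides of $\psi_{\ell_i}$, and contradict \cref{lem:side:10}. The parity bookkeeping also matches: your $(-1)^{n+c+\kappa}$ with $(c,\kappa)=(1,i-2)$ or $(0,i-1)$ agrees with the paper's $W_n(\ell_i)(-1)^{n+i}<0$ in both sub-cases.

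What you do differently is make two steps explicit that the paper leaves implicit, and both are genuine tightenings rather than stylistic choices. First, your induction on $i$: the claim that $W_n$ has \emph{exactly} $i-2$ (resp.\ $i-1$) roots in $(x_\Delta^-,\ell_i)$ requires knowing $\xi_{n,j}<\ell_i$ for all large $n$ and all $j<i$ in the admissible range, which fails a priori if, say, $\ell_{i-1}=\ell_i$; the paper asserts the count without addressing this, whereas your inductive hypothesis $\ell_j=x_\Delta^-<\ell_i$ pins it down. (The paper can be read as a minimal-counterexample argument, which is equivalent, but it is unstated.) Second, you verify $\ell_i\in J_\Delta$ — i.e.\ $\ell_i<x_\Delta^+$, needed to have $\Delta(\ell_i)<0$ and invoke \cref{thm:sgn:d<0:10} — by bootstrapping from \cref{thm:xd2:10} with $j=2$; the paper simply asserts $\ell_i\in J_\Delta$. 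Both of your additions are sound (one small remark: when $f(x_\Delta^+)\ge0$ you could instead invoke $R_n\subset J_0=J_\Delta$ from \cref{thm:RR:10}, but using $j=2$ works uniformly since $\xi_{n,n-1}\nearrow x_\Delta^+$ holds in all cases).
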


\begin{proof}
Let $i\ge 1$. From the property $R_{n+1}\bowtie R_n$ obtained in \cref{thm:RR:10},
we see that the sequence~$\xi_{n,i}$ ($n\ge i$) decreases as $n\to\infty$.
Since it is bounded by the interval $J_0$, the sequence $\xi_{n,i}$ converges.
Suppose that $\lim_{n\to\infty}\xi_{n,i}=\ell_i$.

Suppose that $f(x_\Delta^-)>0$ and $i\ge2$. 
From \cref{thm:bound:10}~(i), we see that $\xi_{n,i}>x_\Delta^-$ for $n\ge i$,
which implies that $\ell_i\ge x_\Delta^-$. 
Suppose, to the contrary, that $\ell_i>x_\Delta^-$. 
When $n$ is large, the polynomial $W_n(x)$ has exactly $i-2$ roots in the interval $(x_\Delta^-,\,\ell_i)$,
that is, the roots $\xi_{n,2},\,\xi_{n,3},\,\ldots,\,\xi_{n,\,i-1}$.
Thus, by the intermediate value theorem, we infer that 
\[
W_n(x_\Delta^-)W_n(\ell_i)(-1)^{i-2}\>0\qquad\text{for large $n$}.
\]
On the other hand, we have $A(x_\Delta^-)<0$ by \cref{sgn:A:xd:10}.
Since $f(x_\Delta^-)>0$, by \cref{W:xd}, we have 
$W_n(x_\Delta^-)(-1)^{n+1}>0$ for large $n$.
Multiplying it by the above inequality results in that 
\begin{equation}\label[ineq]{pf:sgn:W:ell:10}
W_n(\ell_{i})(-1)^{n+i}\<0\qquad\text{for large $n$}.
\end{equation}
To wit, the sign of the value $W_n(\ell_i)$ alternates as $n\to \infty$.
By \cref{thm:sgn:d<0:10}, the angle~$n\theta_{\ell_i}$ moves between the two sides of the line~$\psi_{\ell_i}$ alternatively 
for large~$n$. Since $\ell_i\in J_\Delta$, we infer that $\Delta(\ell_i)<0$, which 
contradicts \cref{lem:side:10}. This proves that $\ell_i=x_\Delta^-$.

Now, suppose that $f(x_\Delta^-)\le 0$, and that $i\ge 1$. 
By \cref{thm:RR:10}, we have $\ell_i\ge x_\Delta^-$.
Suppose, to the contrary, that $\ell_i>x_\Delta^-$. 
Along the same lines, we can show that 
\[
W_n(x_\Delta^-)W_n(\ell_i)(-1)^{i-1}\>0\qquad\text{for large $n$}.
\]
On the other hand, since $A(x_\Delta^-)<0$ and $f(x_\Delta^-)\le 0$, we have $W_n(x_\Delta^-)(-1)^{n}>0$ for large~$n$.
Multiplying it by the above inequality gives Ineq.~(\ref{pf:sgn:W:ell:10}),
which causes the same contradiction.
This completes the proof.
\end{proof}

Now we sum up the results obtained in this section to complete the proof of \cref{thm:10:normalized}.
By \cref{thm:RR:10}, every polynomial $W_n(x)$ is real-rooted and $R_{n+1}\bowtie R_n$.
By \cref{thm:xd2:10} and \cref{thm:xd1:10}, 
we have $\xi_{n,j}\searrow x_\Delta^-$ and $\xi_{n,\,n+1-j}\nearrow x_\Delta^+$ for all $j\ge 2$. 
As will be seen in the following rearrangement,
the limit of the smallest roots $\xi_{n,1}$ depends on the sign of the number $f(x_\Delta^-)$,
while the limit of the largest roots $\xi_{n,n}$ depends on the sign of the number $f(x_\Delta^+)$.
Recall from \cref{def:b0:10} that $b_0=|a-2|\sqrt{-d}$, and from \cref{sgn:f:xd1:10,sgn:f:xd2:10} that
\begin{align*}
&f(x_\Delta^-)>0
\eqrl
b<(a-2)\sqrt{-d},\rmand\\[4pt]
&f(x_\Delta^+)<0
\eqrl
b>(2-a)\sqrt{-d}.
\end{align*}
We are ready to state the remaining limits according to the ranges of the numbers $a$ and $b$.

\bigskip
\noindent{\bf (i)}
When $a\le 2$ and $b\le b_0$, we have $f(x_\Delta^-)\le0$ and $f(x_\Delta^+)<0$. 
Therefore, \cref{thm:xd1:10} gives that $\xi_{n,1}\searrow x_\Delta^-$, 
while \cref{thm:xd2:10} gives that $\xi_{n,n}\nearrow x_\Delta^+$.

\medskip
\noindent{\bf (ii)}
When $a>2$ and $b<b_0$, we have $f(x_\Delta^-)>0$ and $f(x_\Delta^+)<0$. 
Therefore, \cref{thm:xg1:10} gives that $\xi_{n,1}\searrow x_g^-$, 
while \cref{thm:xg2:10} gives that $\xi_{n,n}\nearrow x_g^+$.

\medskip
\noindent{\bf (iii)}
Otherwise, we have $b>b_0$ or ``$b=b_0$ and $a>2$''. 
In either case, we have $f(x_\Delta^-)\le0$ and $f(x_\Delta^+)<0$. 
Therefore, \cref{thm:xd1:10} gives that $\xi_{n,1}\searrow x_\Delta^-$, 
and \cref{thm:xg2:10} gives that $\xi_{n,n}\nearrow x_g^+$.

\bigskip
This completes the proof of \cref{thm:10:normalized}.

\section{\large Proof of \cref{thm:10}}\label{sec:pf2:10}

In this section we derive \cref{thm:10} by using \cref{thm:10:normalized}.
The proof is divided into two steps. First, we generalize \cref{thm:10:normalized} by dropping the restriction $b\ge0$; see \cref{prop:b<0:10}.
Second, we extend \cref{prop:b<0:10} by allowing the polynomial $W_1(x)=t(x-r)$, by translation and magnification.

\begin{prop}[allowing $b<0$]\label{prop:b<0:10}
Let $W_n(x)$ be polynomials defined by \cref{rec:10}, 
where $a>0$, $d<0$, and $b\in\R$, with $W_0(x)=1$ and $W_1(x)=x$.
Then every polynomial $W_n(x)$ is real-rooted. 
Denote the zero-set of the polynomial~$W_n(x)$ by~$R_n=\{\xi_{n,1},\,\xi_{n,2},\,\ldots,\,\xi_{n,n}\}$ in increasing order. 
Then we have $R_{n+1}\bowtie R_n$ and \cref{lim:10}.
Moreover, we have the following.
\begin{itemize}
\smallskip\item[(i)]
If $a\le 2$ and $|b|\le b_0$, then $\xi_{n,1}\searrow x_\Delta^-$ and $\xi_{n,n}\nearrow x_\Delta^+$.
\smallskip\item[(ii)]
If $a>2$ and $|b|<b_0$, then $\xi_{n,1}\searrow x_g^-$ and $\xi_{n,n}\nearrow x_g^+$.
\smallskip\item[(iii)]
Otherwise, we have $b\ne 0$, and the following.
\begin{itemize}
\smallskip\item[(iii)-1.]
If $b<0$, then $\xi_{n,1}\searrow x_g^-$ and $\xi_{n,n}\nearrow x_\Delta^+$.
\smallskip\item[(iii)-2.]
If $b>0$, then $\xi_{n,1}\nearrow x_\Delta^-$ and $\xi_{n,n}\nearrow x_g^+$.
\end{itemize} 
\end{itemize}
\end{prop}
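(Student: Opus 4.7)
The plan is to reduce \cref{prop:b<0:10} to \cref{thm:10:normalized} by means of the reflection $x\mapsto -x$, which converts a $(1,0)$-sequence with parameter $b$ into one with parameter $-b$. I would set $\tW_n(x)=(-1)^n W_n(-x)$ and verify by induction on $n$ that $\tW_0(x)=1$, $\tW_1(x)=x$, and
\[
\tW_n(x) \= (ax-b)\,\tW_{n-1}(x) + d\,\tW_{n-2}(x).
\]
Thus when $b\le 0$, the sequence $\{\tW_n\}_{n\ge 0}$ is a normalized \oz\ in the sense of \cref{def:W:10} with parameter $\tb=-b\ge 0$, keeping $a$ and $d$ fixed, so that \cref{thm:10:normalized} applies directly to $\{\tW_n\}$. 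The root correspondence ``$\xi$ is a root of $W_n(x)$ if and only if $-\xi$ is a root of $\tW_n(x)$'' gives $\xi_{n,i}=-\txi_{n,n+1-i}$, where $\tR_n=\{\txi_{n,1}<\cdots<\txi_{n,n}\}$ is the ordered zero-set of $\tW_n(x)$.

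Through this correspondence, every conclusion of \cref{thm:10:normalized} for $\{\tW_n\}$ transfers to $\{W_n\}$: distinct-real-rootedness is preserved, the interlacing $\tR_{n+1}\bowtie \tR_n$ becomes $R_{n+1}\bowtie R_n$ under order reversal, and a decreasing limit of the $j$-th tilded root corresponds to an increasing limit of the $(n+1-j)$-th original root, and conversely. Substituting $\tb=-b$ into \cref{def:xd+-:10,def:xg+-:10} yields the mirror relations $\tilde{x}_\Delta^\pm=-x_\Delta^\mp$ and $\tilde{x}_g^\pm=-x_g^\mp$, while the threshold $b_0=|a-2|\sqrt{-d}$ is invariant under $b\mapsto -b$. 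Combining these observations with \cref{thm:10:normalized} applied to $\{\tW_n\}$ immediately produces the interior limits \cref{lim:10} stated in the proposition.

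For the four extremal limits, I would split on the sign of $b$. When $b\ge 0$, \cref{thm:10:normalized} applies to $\{W_n\}$ directly and yields the conclusions listed in (i), (ii) and (iii)-2. When $b<0$, running \cref{thm:10:normalized} on $\{\tW_n\}$ with $\tb=-b>0$ and translating back via the mirror relations yields (i), (ii) and (iii)-1 respectively; for instance, in the (iii)-1 regime the tilded system falls in case (iii) of \cref{thm:10:normalized}, giving $\txi_{n,1}\searrow \tilde{x}_\Delta^-$ and $\txi_{n,n}\nearrow \tilde{x}_g^+$, and negation delivers $\xi_{n,n}\nearrow x_\Delta^+$ together with $\xi_{n,1}\searrow x_g^-$ as required. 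The only mildly delicate step is the bookkeeping: one must confirm that for $b<0$ the hypotheses of (i), (ii), (iii)-1 pair up exactly with cases (i), (ii), (iii) of \cref{thm:10:normalized} under $b\mapsto -b$, and in particular that case (iii) of \cref{thm:10:normalized} under reflection never collides with the (iii)-2 conclusion (smallest-root limit $x_\Delta^-$, largest-root limit $x_g^+$). Beyond this matching, no further analytic content is required.
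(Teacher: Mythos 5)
Your proposal matches the paper's proof essentially step for step: both reduce the case $b<0$ to \cref{thm:10:normalized} via the reflection $\tW_n(x)=(-1)^nW_n(-x)$, verify the recursion with parameter $-b\ge 0$, invoke the root correspondence $\xi_{n,i}=-\txi_{n,n+1-i}$, and use the mirror identities $x_{\tD}^\pm=-x_\Delta^\mp$, $x_{\tg}^\pm=-x_g^\mp$ to translate each of the three regimes of \cref{thm:10:normalized} into cases (i), (ii), and (iii)-1. The only minor item the paper spells out that you gloss as ``bookkeeping'' is the brief verification that case (iii) forces $b\neq 0$ (if $b=0$ then $|b|=b_0$ would give $a=2$, putting one back in case (i)); that is a one-line check and does not affect the soundness of your argument.
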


\begin{proof}
See \cref{apd:b<0}.
\end{proof}

Now we are in a position to show \cref{thm:10}.
Suppose that all the hypotheses in \cref{thm:10} hold true.
Consider the sequence $\tW_n(x)$ defined by
\begin{equation}\label[def]{LinearTr:10}
\tW_n(x)=W_n(x/t+r).
\end{equation}
Replacing $x$ by $x/t+r$ in \cref{rec:10}, we obtain that 
\[
\tW_n(x)=(\ta x+\tb)\tW_{n-1}(x)+d\cdotp\tW_{n-2}(x),
\]
where $\ta=a/t$ and $\tb=A(r)$, 
with $\tW_0(x)=1$, $\tW_1(x)=x$. It follows that $\ta>0$.
By \cref{prop:b<0:10}, every polynomial $\tW_n(x)$ is distinct-real-rooted.
From \cref{LinearTr:10}, we see that every polynomial $W_n(x)$ is real-rooted. Let 
\[
\tR_n=\{\txi_{n,1},\,\txi_{n,2},\,\ldots,\,\txi_{n,n}\}
\]
be the ordered zero-set of the polynomial $\tW_n(x)$. 
Then we have
\begin{equation}\label{txi:10}
\xi_{n,i}=\txi_{n,i}/t+r.
\end{equation}
It is clear that magnification and translation preserve the interlacing property.
Thus $\tR_{n+1}\bowtie\tR_n$ implies that $R_{n+1}\bowtie R_n$.

Regarding the numbers $x_\Delta^\pm=x_\Delta^\pm(a,b)$ as functions of $a$ and $b$,
we can define that $x_{\tD}^\pm=x_\Delta^\pm(\ta,\tb)$.
Similarly, we define $x_{\tg}^\pm=x_g^\pm(\ta,\tb)$, and $\tb_0=b_0(\ta)$ 
Then we have
\begin{align}
\label{tD:10}
x_{\tD}^\pm
&\={-\tb\pm2\sqrt{-d}\over \ta}
=\frac{-A(r)\pm2\sqrt{-d}}{a/t},\rmand\\[5pt]
x_{\tg}^\pm
&\=\begin{cases}
\displaystyle{-\tb\pm\sqrt{\tb^2-4d(\ta-1)}\over 2(\ta-1)},&\text{if $\ta\ne1$}\\[5pt]
\displaystyle{-{d\over\tb}},&\text{if $\ta=1$ and $\tb\ne0$}
\end{cases}\notag\\[5pt]
\label{tg:10}
&\=\begin{cases}
\displaystyle{\frac{-A(r)\pm\sqrt{A(r)^2-4d(a/t-1)}}{2(a/t-1)}},&\text{if $a\ne t$}\\[8pt]
\displaystyle{-{d\over A(r)},}&\text{if $a=t$ and $A(r)\ne 0$},
\end{cases}\\[5pt]
\label{tb0:10}
\tb_0&\=|\ta-2|\sqrt{-d}=\frac{|a-2t|\sqrt{-d}}{t}.
\end{align}

By \cref{prop:b<0:10}, we have 
\[
\txi_{n,j}\searrow x_{\tD}^-
\rmand
\txi_{n,\,n+1-j}\nearrow x_{\tD}^+
\all{j\ge2}.
\]
By \cref{txi:10,tD:10}, the above relations can be rewritten as
\[
t(\xi_{n,j}-r)\searrow \frac{-A(r)-2\sqrt{-d}}{a/t}
\rmand
t(\xi_{n,\,n+1-j}-r)\nearrow \frac{-A(r)+2\sqrt{-d}}{a/t},
\]
namely, \cref{lim:10} got proved.
The remaining cases are shown individually below.

\bigskip
\noindent{\bf (i)}
When $a\le 2t$, we have $\ta\le 2$. Suppose that $r\in[r^-,r^+]$. Then $(r-r^-)(r-r^+)\le 0$.
Substituting \cref{def:r+-:10} into it gives that
\[
t^2A(r)^2+d(a-2t)^2\le 0.
\]
By \cref{tb0:10}, the above inequality is equivalent to that $|\tb|\le \tb_0$. By \cref{prop:b<0:10}, we have
\[
\txi_{n,1}\searrow x_{\tD}^-
\rmand
\txi_{n,n}\nearrow x_{\tD}^+.
\]
Along the same lines for proving \cref{lim:10}, we find $\xi_{n,1}\searrow x_\Delta^-$ and $\xi_{n,n}\nearrow x_\Delta^+$.

\medskip
\noindent{\bf (ii)}
When $a>2t$, we have $\ta>2$. Suppose that $r\in(r^-,r^+)$. Same to the above proof for (i), we obtain that $|\tb|<\tb_0$.
By \cref{prop:b<0:10}, we have
\[
\txi_{n,1}\searrow x_{\tg}^-
\rmand 
\txi_{n,n}\nearrow x_{\tg}^+.
\]
By \cref{txi:10,tg:10}, the above relations can be recast as
\begin{align*}
t(\xi_{n,1}-r)&\ \searrow\ \frac{-A(r)-\sqrt{A(r)^2-4d(a/t-1)}}{2(a/t-1)}
\rmand\\
t(\xi_{n,n}-r)&\ \nearrow\ \frac{-A(r)+\sqrt{A(r)^2-4d(a/t-1)}}{2(a/t-1)}.
\end{align*}
In view of \cref{def:xi+-:10}, the above relations reduce to $\xi_{n,1}\searrow \xi^-$ and $\xi_{n,n}\nearrow \xi^+$ respectively.

\medskip
\noindent{\bf (iii)}
Suppose that ``$r=r^-$ and $a>2t$'', or $r<r^-$.
Then we have $r\le r^-$.
By using \cref{def:r+-:10}, we infer that 
\[
\tb=A(r)\le -|a-2t|\sqrt{-d}/t\le 0.
\]
Assume that $\tb=0$. Then we have $a=2t$ and thus get into Case (i), a contradiction.
Therefore, $\tb<0$.
By \cref{prop:b<0:10} (iii)-1, we have
\[
\txi_{n,1}\searrow x_{\tg}^-
\rmand
\txi_{n,n}\nearrow x_{\tD}^+.
\]
By \cref{txi:10,tg:10,tD:10}, the above relations can be rewritten as
\begin{align*}
t(\xi_{n,1}-r)&\ \searrow\ \frac{-A(r)-\sqrt{A(r)^2-4d(a/t-1)}}{2(a/t-1)}
\rmand\\
t(\xi_{n,n}-r)&\ \nearrow\ \frac{-A(r)\pm2\sqrt{-d}}{a/t}.
\end{align*}
Same to the proofs of (i) and (ii), we deduce that $\xi_{n,1}\searrow \xi^-$ and $\xi_{n,n}\nearrow x_\Delta^+$.

\medskip
\noindent{\bf (iv)} It is highly similar to Case (iii), and we omit it.
This completes the proof of \cref{thm:10}.

\section{\large Concluding Remarks}

This section explains why we set $a>0$, $d<0$, and $t>0$ in \cref{thm:10}.

First of all, the restriction $adt\ne0$ is without loss of generality.
In fact,
the sign restriction $a\ne 0$ comes from the type $(1,0)$ of recursive polynomials as the topic of this paper.
When $d=0$, it is clear that $W_n(x)=A(x)^{n-1}W_1(x)$ by \cref{rec:10}, and the root geometry problem becomes trivial.
When $t=0$, the polynomial $W_1(x)=0$,
and one may consider the sequence $\{W_{n+2}(x)/d\}_{n\ge0}$.

Second, with the assumption $adt\ne0$ in hand, the real-rootedness for every polynomial $W_n(x)$ is still not true in general.
In fact, consider the case $b=0$, $t=1$, $W_0(x)=1$ and $W_1(x)=x$.
\begin{itemize}
\item
When $ad>0$, the polynomial $W_2(x)=ax^2+d$ has no real roots since its sign is same to the sign of the number $d$.
\item
When $a<0<d$, the polynomial $W_3(x)=x(a^2x^2+d(a+1))$ has non-real roots as if $a+1>0$.
\end{itemize}

The remaining case except that we handled in this paper is that all the parameters~$a$, $d$, and~$t$ are negative. 
In this case, by considering the sequence $\{(-1)^nW_n(r-x/t)\}_{n\ge0}$ with the aid of \cref{thm:10}, one may derive the real-rootedness of every polynomial $W_n(x)$,
as well as the interlacing property and the limit points of the union of the zero-sets. 
Yet in applications, it is not often to meet such situation that all $a$, $d$, and $t$ are negative.

\appendix

\section{Proof of \cref{lem:xg:10}}\label{apd:xg:10}

Let $x\in\{x_g^\pm\}$. Then we have $g(x)=0$. From \cref{def:g:10} of the function~$g(x)$ as $g^-(x)g^+(x)$, 
we have either $g^-(x)=0$ or $g^+(x)=0$.
To show that $W_n(x)=x^n$, we split into two cases.

Assume that $g^-(x)=0$. 
From \cref{def:g+-:10} of the function $g^-(x)$ as $\bigl(f(x)-\sqrt{\Delta(x)}\,\bigr)/2$, 
and from \cref{def:f:10} of the function $f(x)$ as $2x-A(x)$,  we have 
\begin{equation}\label{pfA1:10}
A(x)+\sqrt{\Delta(x)}\=2x.
\end{equation}
It follows from \cref{def:g+-:10} of the function $g^+(x)$ that 
\begin{equation}\label{pfA2:10}
g^+(x)\=\frac{2x-A(x)+\sqrt{\Delta(x)}}{2}\=\sqrt{\Delta(x)}.
\end{equation}
If $\Delta(x)\ne0$, then \cref{lem:00} gives that 
\begin{equation}\label{pfA3:10}
W_n(x)
\={g^+(x)(A(x)+\sqrt{\Delta(x)}\,)^n-g^-(x)(A(x)-\sqrt{\Delta(x)}\,)^n\over 2^{n}\sqrt{\Delta(x)}}.
\end{equation}
Substituting the condition $g^-(x)=0$, \cref{pfA1:10,pfA2:10} into the above equation, we obtain that 
\[
W_n(x)
\={\sqrt{\Delta(x)}\,(2x)^n-0\over 2^{n}\sqrt{\Delta(x)}}
\=x^n.
\]
Otherwise $\Delta(x)=0$, then \cref{pfA1:10} reduces to $A(x)=2x$. Since $W_1(x)=x$, by \cref{lem:00}, we deduce that
\[
W_n(x)
\=\bgg{1+{n(2W_1(x)-A(x))\over A(x)}}\bgg{{A(x)\over 2}}^n
\=x^n.
\]

When $g^+(x)=0$, 
from \cref{def:g+-:10} of the function $g^+(x)$ as $\bigl(f(x)+\sqrt{\Delta(x)}\,\bigr)/2$, 
and from \cref{def:f:10},  we have 
\begin{equation}\label{pfA1':10}
A(x)-\sqrt{\Delta(x)}\=2x.
\end{equation}
It follows from \cref{def:g+-:10} that 
\begin{equation}\label{pfA2':10}
g^-(x)\=\frac{2x-A(x)-\sqrt{\Delta(x)}}{2}\=-\sqrt{\Delta(x)}.
\end{equation}
If $\Delta(x)\ne0$, 
substituting the condition $g^+(x)=0$, \cref{pfA1':10,pfA2':10} into \cref{pfA3:10}, we obtain that 
\[
W_n(x)
\={0-(-\sqrt{\Delta(x)}\,)(2x)^n\over 2^{n}\sqrt{\Delta(x)}}
\=x^n.
\]
This completes the proof of the identity $W_n(x_g^\pm)=(x_g^\pm)^n$.

Below we show the results (i) and (ii) in \cref{lem:xg:10}.

\medskip
\noindent (i) Suppose that $a\le 1$. 
If $a<1$, from \cref{def:xg+-:10} of the numbers $x_g^\pm$, 
the desired inequalities $0<x_g^+\le x_g^-$ can be rewritten as 
\[
0\<{-b+\sqrt{b^2-4d(a-1)}\over 2(a-1)}
\;\le\;
{-b-\sqrt{b^2-4d(a-1)}\over 2(a-1)}.
\]
Since $a<1$, the first inequality holds because $d<0$, and the second inequality holds trivially.
Otherwise $a=1$ and $b\ne0$. 
By \cref{def:xg+-:10}, the desired inequalities $0<x_g^+\le x_g^-$ become 
\[
0<-d/b\le -d/b,
\] 
which is also trivial since $d<0<b$.
The remaining inequalities $W_n(x_g^\pm)>0$ follow immediately from the equations $W_n(x_g^\pm)=x_g^\pm$ and the inequalities $x_g^\pm>0$.

\smallskip
\noindent (ii) Suppose that $a>1$.
From \cref{def:xg+-:10} of the numbers $x_g^\pm$, 
the desired inequalities $x_g^-<0<x_g^+$ can be rewritten as 
\[
{-b-\sqrt{b^2-4d(a-1)}\over 2(a-1)}\<0\<{-b+\sqrt{b^2-4d(a-1)}\over 2(a-1)}.
\]
Both of them hold trivially since $d<0\le b$.
Consequently, since $W_n(x_g^\pm)=(x_g^\pm)^n$, we have
\begin{eqnarray*}
W_n(x_g^-)(-1)^n&=&(x_g^-)^n(-1)^n>0\rmand\\
W_n(x_g^+)&=&(x_g^+)^n>0.
\end{eqnarray*}
This completes the proof of \cref{lem:xg:10}.

\bigskip

\section{Proof of \cref{lem:J0:10}}\label{apd:J0:10}

Suppose that 
\begin{equation}\label[ineq]{pf18}
f(x_\Delta^+)<0.
\end{equation}
Substituting \cref{f:xd2:10} into \cref{pf18}, and canceling the positive factor $2/a$, we obtain that 
\begin{equation}\label[ineq]{pf19}
b>(2-a)\sqrt{-d}.
\end{equation}
When $a=1$, \cref{pf19} reduces to $b>\sqrt{-d}$, which implies that $b\neq0$.
It follows from \cref{def:xg+-:10} that the numbers $x_g^\pm$ are well-defined.
We shall show that 
\begin{itemize}
\smallskip\item[(i)]
the numbers $x_g^\pm$ are real; 
\smallskip\item[(ii)]
we have $x_g^+>x_\Delta^+$;
\smallskip\item[(iii)]
if additionally we have $f(x_\Delta^-)>0$, then $x_g^-<x_\Delta^-$.
\end{itemize}
For (i) and (ii), we split into the cases $a=1$ and $a\ne1$. 
The desired result (iii) will be shown individually.

Assume that $a=1$.
From \cref{def:xg+-:10} of the numbers~$x_g^\pm$ as the real number $-d/b$ identically,
and from \cref{def:xd+-:10} of the number~$x_\Delta^+$ as $-b+2\sqrt{-d}$,
it is routine to compute that
\[
x_g^+-x_\Delta^+
\=-\frac{d}{b}-(-b+2\sqrt{-d})
\=\frac{(b-\sqrt{-d}\,)^2}{b}
\>0.
\]
In other words, the desired results~(i) and~(ii) are true when $a=1$.

Suppose that $a\ne1$. From \cref{def:xg+-:10}, we have
\[
x_g^\pm={-b\pm\sqrt{\Delta_g}\over 2(a-1)},
\qquad\text{where $\Delta_g=b^2-4d(a-1)$}.
\]
To show the realness of the numbers $x_g^\pm$, it suffices to prove that $\Delta_g>0$. 
In fact, when $a>1$, one has $\Delta_g>0$ since $d<0$.
When $0<a<1$, squaring both sides of \cref{pf19} gives that
$b^2>-d(2-a)^2$. Therefore, we infer that
\[
\Delta_g\=b^2-4d(a-1)\>-d(2-a)^2-4d(a-1)\=-da^2\>0.
\]
This confirms the desired result (i).

In order to show the desired result~(ii) for $a\ne1$, as well as the desired result~(iii), we make some preparations uniformly.

\Cref{def:g:10,def:g+-:10} imply that
\begin{equation}\label[ineq]{pf:eq:dd:xg:10}
\Delta(x_g^\pm)\=f^2(x_g^\pm)-4g(x_g^\pm)\=f^2(x_g^\pm)\;\ge\;0.
\end{equation}
Since the function $\Delta(x)$ is a quadratic polynomial with positive leading coefficient,
whose roots are $x_\Delta^\pm$ with $x_\Delta^-<x_\Delta^+$,
we deduce that 
\begin{align}
\label[rl]{pf20}
&\text{either}\quad x_g^-\le x_\Delta^-\quad \text{or}\quad x_g^-\ge x_\Delta^+,\qquad\text{and that}\\
\label[rl]{pf17}
&\text{either}\quad x_g^+\le x_\Delta^-\quad \text{or}\quad x_g^+\ge x_\Delta^+. 
\end{align}
On the other hand, let $X=(a-2)b$ and $Y=a\sqrt{\Delta_g}$. Since $a>0$, we have
\begin{equation}\label[ineq]{pf:Y>0:10}
Y>0.
\end{equation}
By \cref{def:xg+-:10} of the numbers~$x_g^\pm$, it is direct to calculate that
\begin{align}
\label{pf:xA-xg:10}
-{b\over a}-x_g^\pm&\={X\pm Y\over 2a(1-a)},\rmand\\
\label{pf:X2-Y2:10}
X^2-Y^2&\=4(1-a)(b^2-a^2d).
\end{align}
Since $d<0<a$, we have $b^2-a^2d>0$. Since $a\ne 1$, \cref{pf:X2-Y2:10} implies that
\begin{equation}\label[ineq]{pf:sgn:X-Y:10}
(X+Y)(X-Y)(1-a)>0.
\end{equation}

Now we are in a position to show the desired result (ii) for $a\ne1$.
Suppose, to the contrary, that $x_g^+\le x_\Delta^+$. 
Then, by Relation (\ref{pf17}), we deduce that either $x_g^+=x_\Delta^+$, or 
\[
x_g^+\;\le\; x_\Delta^-\=\frac{-b-2\sqrt{-d}}{a}\<-\frac{b}{a}.
\]  
In the former case, we infer from \cref{pf:eq:dd:xg:10} that
\[
f^2(x_\Delta^+)\=f^2(x_g^+)\=\Delta(x_g^+)\=\Delta(x_\Delta^+)\=0.
\]
It follows that $f(x_\Delta^+)=0$, contradicting \cref{pf18}.
In the latter case,
\cref{pf:xA-xg:10} implies that
\begin{equation}\label[ineq]{pf:sgn:X+Y:10}
(X+Y)(1-a)>0.
\end{equation}
Multiplying \cref{pf:sgn:X+Y:10} by \cref{pf:sgn:X-Y:10}, we find that $X-Y>0$. 
Together with \cref{pf:Y>0:10}, we conclude that 
\[
X>Y>0.
\]
On one hand, since $X=(a-2)b$ and $X>0$, we obtain that $a>2$.
On the other hand, since $X>0$ and $Y>0$, from \cref{pf:sgn:X+Y:10}, we deduce that $a<1$,
a contradiction. This proves~(ii).

At last, let us show the desired result~(iii). Suppose that 
\begin{equation}\label[ineq]{pf27}
f(x_\Delta^-)\>0.
\end{equation}
By \cref{f:xd1:10}, \cref{pf27} can be rewritten as $b<(a-2)\sqrt{-d}$.
Since $b\ge0$, we have 
\[
a>2.
\]
Suppose, by way of contradiction, that $x_g^-\ge x_\Delta^-$.
Then, by \cref{pf20}, we deduce that either $x_g^-=x_\Delta^-$, or 
\[
x_g^-\;\ge\; x_\Delta^+\=\frac{-b+2\sqrt{-d}}{a}\>-\frac{b}{a}.
\]
In the former case, we infer from \cref{pf:eq:dd:xg:10} that 
\[
f^2(x_\Delta^-)\=f^2(x_g^-)\=\Delta(x_g^-)\=\Delta(x_\Delta^-)\=0.
\]
It follows that $f(x_\Delta^-)=0$, contradicting \cref{pf27}.
In the latter case, \cref{pf:xA-xg:10} implies that
$(X-Y)(1-a)<0$. Since $a>2$, we infer that $X>Y$.
Together with \cref{pf:Y>0:10}, we conclude that 
\[
X>Y>0,
\]
which contradicts \cref{pf:sgn:X-Y:10}.
This completes the proof of \cref{lem:J0:10}.

\bigskip

\section{Proof of \cref{lem:f:xg2:10}}\label{apd:f:10}

Suppose that $f(x_\Delta^+)<0$. Then we have \cref{pf19}.

When $a\ge2$, from \cref{def:f:10}, the function $f(x)=(2-a)x-b$ is decreasing.
Together with \cref{pf18}, we infer that $f(x)<0$ for all $x>x_\Delta^+$.

Let $0<a<2$ below. By the monotonicity of the function~$f(x)$, it suffices to show that $f(x_g^+)<0$.

When $a=1$, \cref{pf19} reduces to $b>\sqrt{-d}$, that is, $b^2+d>0$. 
On the other hand, from \cref{def:xg+-:10} of the number~$x_g^+$, we have $x_g^+=-d/b$. It follows that
\[
f(x_g^+)=x_g^+-b=-{d+b^2\over b}<0.
\]

When $a\ne1$, recall that $\Delta_g=b^2-4d(a-1)$ is the discriminant of the quadratic function polynomial~$g(x)$.
By \cref{def:xg+-:10} of the number~$x_g^+$, it is direct to compute that 
\[
2(1-a)f(x_g^+)=(a-2)\sqrt{\Delta_g}+ab,
\]
which implies the equivalence relation
\[
(1-a)f(x_g^+)>0
\eqrl
ab>(2-a)\sqrt{\Delta_g}.
\]
Since $a<2$, squaring both sides of the above rightmost inequality gives another equivalence relation:
\[
ab>(2-a)\sqrt{\Delta_g}
\eqrl
(1-a)(b^2+d(a-2)^2)<0.
\]
By \cref{pf19}, we have $b^2+d(a-2)^2>0$. 
Therefore, transiting the above two equivalence relations gives the following equivalence relation:
\[
(1-a)f(x_g^+)>0
\eqrl
1-a<0.
\]
Hence, we infer that $f(x_g^+)<0$. This completes the proof.

\section{Proof of \cref{lem:Q:10}}\label{apd:Q}
Let $p_0$ be the integer defined by \cref{def:p0}. 
From \cref{def:pv}, we have $\pv(\theta+2k\pi)=\pv(\theta)$ for any $\theta\in\R$ and $k\in\Z$. Thus, we have 
\[
\pv((p_0+j)\theta)=\pv(p_0q\pi/p+j\theta)=\pv(j\theta).
\]
In other words, the sequence $\{\pv(n\theta)\}_{n\ge1}$ is periodic with a period $p_0$.

To show the minimality of the period $p_0$, 
it suffices to show that the numbers $\pv(n\theta)$ for $n\in[p_0]$ are pairwise distinct.
In fact, otherwise, there are three integers $j_1$, $j_2$, and $k$, 
such that $1\le j_1<j_2\le p_0$, and that $(j_2-j_1)\theta=2k\pi$, that is, $(j_2-j_1)q=2kp_0$.
Since $1\le j_2-j_1\le p_0-1$ and $(p_0,q)=1$, 
the left hand side $(j_2-j_1)q$ is not divided by the factor~$p_0$ of the right hand side, which is absurd.

Let $L$ (resp., $R$) denote the set on the left (resp., right) hand side of \cref{set:pv:10}.
We have just proved that the set $L$ has cardinality $p_0$, as same as the set $R$.
Therefore, to justify $L=R$ as sets, it suffices to show that the set~$L$ is contained in the set~$R$.
In fact, we have 
\[
\pv(n\theta)=\pv(nq\pi/p)=\begin{cases}
\pv(2(nq/2)\pi/p_0),&\text{if $q$ is even};\\[5pt]
\pv(2nq\pi/p_0),&\text{if $q$ is odd}.
\end{cases}
\]
Since $\pv(\psi)\in[\,0,\,2\pi)$ for any angle $\psi$, the rightmost expression in the above formula implies that the number $\pv(n\theta)$ has the form $2j\pi/p_0$,
where $j\in[\,0,\,p_0-1]$. Namely, we have $L\subseteq R$.
This completes the proof.

\section{Proof of \cref{lem:NQ:10}}\label{apd:NQ}

We will need Dirichlet's approximation theorem.

\begin{thm}[Dirichlet's approximation theorem]
For any real number $\mu$ and any positive integer~$N$,
there exist integers~$p$ and~$q$ such that $q\in[N]$ and $|q\mu-p|\le 1/(N+1)$.
\end{thm}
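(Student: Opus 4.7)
The plan is to apply the pigeonhole principle to the fractional parts of the first $N+1$ nonnegative integer multiples of $\mu$. Writing $\{x\}\=x-\fl{x}$ for the fractional part, I will set $P_k\=\{k\mu\}\in[\,0,1)$ for $k\=0,1,\ldots,N$ and partition $[\,0,1)$ into the $N+1$ half-open boxes
\[
I_j \= \bigl[\,j/(N+1),\,(j+1)/(N+1)\,\bigr),\qquad j\=0,1,\ldots,N,
\]
each of length $1/(N+1)$. With $N+1$ points $P_0,P_1,\ldots,P_N$ distributed among $N+1$ boxes, the argument splits into two cases according to whether or not some box contains two of these points.

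In the first case, some indices $0\le k_1<k_2\le N$ satisfy $P_{k_1},P_{k_2}\in I_j$ for a common $j$. Then $|P_{k_2}-P_{k_1}|<1/(N+1)$, and unwrapping the fractional parts gives $|q\mu-p|<1/(N+1)$ with $q\=k_2-k_1\in[N]$ and $p\=\fl{k_2\mu}-\fl{k_1\mu}$.

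In the second case, the $N+1$ points occupy the $N+1$ boxes bijectively. Since $P_0\=0$ already sits in $I_0$, no other $P_k$ lies in $I_0$, and so the top box $I_N\=[\,N/(N+1),1)$ must contain a unique point $P_{k_0}$ with $k_0\in[N]$. Thus $\{k_0\mu\}\ge N/(N+1)$, and taking $q\=k_0$ and $p\=\fl{k_0\mu}+1$ yields $|q\mu-p|\=1-\{k_0\mu\}\le 1/(N+1)$.

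I do not foresee a serious obstacle; the proof is essentially a textbook pigeonhole argument. The only points needing verification are that the bound is the non-strict inequality $\le 1/(N+1)$ in both branches (the half-open boxes force strict inequality in the first case, whereas the boundary instance $\{k_0\mu\}\=N/(N+1)$ attains equality in the second) and that the integer $q$ is always in $[N]$, which is immediate because $q\=k_2-k_1$ with $1\le k_2-k_1\le N$ in the first case and $q\=k_0\in[N]$ in the second.
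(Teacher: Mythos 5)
Your pigeonhole argument is correct, and it is worth noting that the paper itself offers no proof of this statement: Dirichlet's approximation theorem is quoted there as a known classical result, used only as an input to \cref{prop:NQ}. Your proof is the standard one, and the two cases are handled properly: when two of the $N+1$ fractional parts $P_0,\ldots,P_N$ share one of the $N+1$ half-open boxes you get the strict bound $|q\mu-p|<1/(N+1)$ with $q=k_2-k_1\in[N]$, and in the bijective case the point $P_{k_0}$ forced into the top box $[\,N/(N+1),\,1)$ has $k_0\ne 0$ (since $P_0=0$ occupies $I_0$), giving $|k_0\mu-(\fl{k_0\mu}+1)|=1-\{k_0\mu\}\le 1/(N+1)$, where equality can occur exactly at the boundary value $\{k_0\mu\}=N/(N+1)$. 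This matches the non-strict inequality as stated, so the argument is complete as written.
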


For the sake of proving \cref{lem:NQ:10}, we show the following stronger proposition.

\begin{prop}\label{prop:NQ}
Let $\theta\in\R$ such that $\theta\not\in\pi\Q$.
Then for any positive integer $M$, and for any nonempty open interval $I\subset(0,\,2\pi)$,
there exist positive integers $m$ and $q$ such that 
\[
\pv((m+j)q\theta)\in I\all{j\in[M]}.
\]
\end{prop}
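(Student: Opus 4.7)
The plan is to invoke Dirichlet's approximation theorem to produce a positive integer $q$ that makes $q\theta$ extremely close to a multiple of $2\pi$. If $\delta$ denotes the resulting tiny residue $q\theta-2\pi p$, then $\pv(kq\theta)=\pv(k\delta)$ for every positive integer $k$, so as $k$ advances by one the principal value shifts by the small quantity $\delta$ modulo $2\pi$. Once $|\delta|$ is forced to be smaller than $\ell/M$, where $\ell$ is the length of $I$, a single traversal of $I$ by the orbit automatically contains at least $M$ consecutive members, and choosing $m$ appropriately pinpoints the window we want.

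More precisely, I would write $I=(\alpha,\beta)$ with $0<\alpha<\beta<2\pi$, set $\ell=\beta-\alpha$, and choose $N$ large enough that $2\pi/(N+1)$ is strictly smaller than each of $\ell/M$, $\alpha$, and $2\pi-\beta$. Dirichlet's theorem applied to $\mu=\theta/(2\pi)$ then yields $q\in[N]$ and $p\in\Z$ with $|q\theta-2\pi p|\le 2\pi/(N+1)$, and the residue $\delta=q\theta-2\pi p$ is nonzero because $\theta\notin\pi\Q$ precludes $q\theta=2\pi p$. Using the identity $(m+j)q\theta=2\pi p(m+j)+(m+j)\delta$, the goal reduces to showing that $\pv\bigl((m+j)\delta\bigr)\in I$ for every $j\in[M]$. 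When $\delta>0$, I would take $m=\lfloor\alpha/\delta\rfloor$, which is a positive integer thanks to $\delta<\alpha$; standard floor estimates then place $(m+j)\delta$ inside $(\alpha,\,\alpha+M\delta)\subset(\alpha,\beta)\subset(0,2\pi)$, so its principal value equals $(m+j)\delta$ itself. The case $\delta<0$ is handled by a mirror-image argument: setting $\delta'=-\delta$ and $m=\lfloor(2\pi-\beta)/\delta'\rfloor$, the quantity $(m+j)\delta'$ is confined to the interval $(2\pi-\beta,\,2\pi-\alpha)$, whence $\pv\bigl((m+j)\delta\bigr)=2\pi-(m+j)\delta'\in I$.

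The main subtlety I anticipate is wrap-around control: across the $M$ consecutive integers $m+1,\ldots,m+M$, the sequence $(m+j)\delta$ must remain within a single fundamental domain of length $2\pi$, with no extra modular reductions that would disrupt the monotone progression through $I$. The three smallness conditions imposed on $2\pi/(N+1)$ handle this in one stroke. The bound $|\delta|<\ell/M$ ensures the $M$-step window is shorter than $I$ itself, while the bounds $|\delta|<\alpha$ and $|\delta|<2\pi-\beta$ keep the endpoints of the window strictly inside $(0,2\pi)$ and simultaneously force $m\ge 1$, so that $m$ is a genuine positive integer.
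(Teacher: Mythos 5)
Your argument is correct and is essentially the same approach as the paper's: apply Dirichlet's approximation theorem to $\theta/(2\pi)$ to find $q$ such that $q\theta$ lies within $\epsilon$ of a multiple of $2\pi$, then march the arithmetic progression $\{(m+j)q\theta\}$ through $I$ by choosing $m$ to position the window. The only notable difference is in how the two treatments force the small residue to be usable: the paper lands on $\pv(q'\theta)$ being either near $0$ or near $2\pi$ and, in the second case, replaces $q'$ by a suitable multiple $kq'$ to bring the residue back to $(0,\epsilon)$, whereas you keep the signed residue $\delta=q\theta-2\pi p$ and handle $\delta>0$ and $\delta<0$ symmetrically (choosing $m=\lfloor\alpha/\delta\rfloor$ or $m=\lfloor(2\pi-\beta)/|\delta|\rfloor$). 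Your version slightly streamlines the argument by avoiding that multiplication step; the extra conditions $2\pi/(N+1)<\alpha$ and $2\pi/(N+1)<2\pi-\beta$ you impose are the price paid to guarantee $m\ge1$, and the floor-function estimates you indicate do close cleanly, so there is no gap.
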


\begin{proof}
Let $M$ be a positive integer.
Let $I=(u,v)$ be an open interval such that $0<u<v<2\pi$.
Let 
\[
\epsilon={v-u\over M+1}.
\]
Take an integer $N$ such that $2\pi/(N+1)<\epsilon$.
By Dirichlet's approximation theorem,
there exist integers $p$ and $q'$ such that $q'\in[N]$ and $|q'\theta/(2\pi)-p|<1/(N+1)$.
Therefore, we have
\[
|q'\theta-2p\pi|<{2\pi\over N+1}<\epsilon.
\]
In other words, we have either $\pv(q'\theta)\in[\,0,\epsilon)$ or $2\pi-\pv(q'\theta)\in[\,0,\epsilon)$.
Since $\theta\not\in\pi\Q$, the principal value $\pv(q'\theta)$ does not vanish, that is,
either $\pv(q'\theta)\in(0,\,\epsilon)$ or $\pv(q'\theta)\in(2\pi-\epsilon,\,2\pi)$.
For the latter case, there is a positive integer~$k$ such that $\pv(kq'\theta)\in(0,\,\epsilon)$, and we define $q=kq'$.
In the former case, we define $q=q'$. 
In summary, we obtain a positive integer~$q$ such that $\pv(q\theta)\in(0,\,\epsilon)$.

Now, let $m$ be the minimum positive integer such that
\begin{equation}\label[ineq]{pf:eq4}
(m+1)\pv(q\theta)\>u.
\end{equation}
Since $\pv(q\theta)<\epsilon=(v-u)/(M+1)$, we have $M\pv(q\theta)<v-u-\pv(q\theta)$.
It follows that 
\begin{equation}\label[ineq]{pf:eq5}
(m+M)\pv(q\theta)\<u+v-u-\pv(q\theta)\<v.
\end{equation}
Hence, by \cref{pf:eq4,pf:eq5}, we conclude that 
\[
u\<(m+1)\pv(q\theta)\<(m+2)\pv(q\theta)\<\cdots\<(m+M)\pv(q\theta)\<v.
\]
Let $j\in[M]$. Since $(m+j)\pv(q\theta)\in(u,v)\subset(0,2\pi)$,
we infer that $\pv((m+j)q\theta)=(m+j)\pv(q\theta)$, which implies the desired relation.
This completes the proof of \cref{prop:NQ}.
\end{proof}

\section{Proof of \cref{prop:b<0:10}}\label{apd:b<0}

Since Case (iii) is the exclusive case of (i) and (ii), we have either $|b|>b_0$, or ``$|b|=b_0$ and $a>2$''.
Assume that $b=0$, then the former possibility is impossible, and the latter possibility $|b|=b_0$ implies that $b_0=0$ and thus $a=2$,
a contradiction. Thus we have $b\ne 0$ for Case (iii).

The results for $b\ge 0$ are exactly those in \cref{thm:10:normalized}. 
Let $b<0$. Define 
\begin{equation}\label[def]{def:tW}
\tW_n(x)=(-1)^nW_n(-x).
\end{equation}
The the polynomials $\tW_n(x)$ satisfy the recursion
\[
\tW_n(x)=(ax-b)\tW_{n-1}(x)+d\tW_{n-2}(x).
\]
with same initiations $\tW_0(x)=1$ and $\tW_1(x)=x$. 
By \cref{thm:10:normalized}, every polynomial $\tW_n(x)$ is distinct-real-rooted.
Let 
\[
\tR_n=\{\txi_{n,1},\,\txi_{n,2},\,\ldots,\,\txi_{n,n}\}
\]
be the ordered zero-set of the polynomial $\tW_n(x)$. 
From \cref{def:tW}, we see that
the roots of the polynomial $W_n(x)$ are the opposites of the roots of the polynomial $\tW_n(x)$.
It follows that every polynomial $W_n(x)$ is real-rooted, with the ordered zero-set 
\[
R_n=\{-\txi_{n,n},\,-\txi_{n,\,n-1},\,\ldots,\,-\txi_{n,1}\}.
\]
In other words, we have 
\begin{equation}\label{txi:b<0:10}
\xi_{n,i}=-\txi_{n,\,n+1-i},\all{i\in[n]}.
\end{equation}
Since $\tR_{n+1}\bowtie \tR_n$, we infer that $R_{n+1}\bowtie R_n$.

Regarding the numbers $x_\Delta^\pm=x_\Delta^\pm(b)$ as a functions in~$b$,
we can define the numbers $x_{\tD}^\pm=x_\Delta^\pm(-b)$.
Similarly, we define $x_{\tg}^\pm=x_g^\pm(-b)$. 
Then we have
\begin{align}
\label{tD:b<0:10}
x_{\tD}^\pm
&\=\frac{-(-b)\pm2\sqrt{-d}}{a}
=-\frac{-b\mp2\sqrt{-d}}{a}
=-x_\Delta^\mp,\rmand\\[5pt]
\notag
x_{\tg}^\pm
&\=\begin{cases}
\displaystyle{-(-b)\pm\sqrt{(-b)^2-4d(a-1)}\over 2(a-1)},&\text{if $a\ne1$},\\[8pt]
\notag
\displaystyle{-{d\over -b},}&\text{if $a=1$ and $b\ne 0$},
\end{cases}\\[5pt]
&\=\begin{cases}
\displaystyle{-\frac{-b\mp\sqrt{b^2-4d(a-1)}}{2(a-1)}},&\text{if $a\ne1$},\\[8pt]
\notag
\displaystyle{-\bgg{-{d\over b}},}&\text{if $a=1$ and $b\ne 0$},
\end{cases}\\[5pt]
\label{tg:b<0:10}
&\=-x_g^\mp.
\end{align}
By \cref{thm:10:normalized}, we have 
\[
\txi_{n,j}\searrow x_{\tD}^-
\rmand
\txi_{n,\,n+1-j}\nearrow x_{\tD}^+
\all{j\ge2}.
\]
By \cref{txi:b<0:10,tD:b<0:10}, the above relations can be rewritten as
\[
-\xi_{n,\,n+1-j}\searrow -x_\Delta^+
\rmand
-\xi_{n,\,j}\nearrow -x_\Delta^-,
\]
namely, Relation~\eqref{lim:10} got proved.
The remaining cases are shown individually below.

\bigskip
\noindent{\bf (i)}
When $a\le 2$ and $|b|\le b_0$, we have $-b\le b_0$.
By \cref{thm:10:normalized} (i), we have 
\[
\txi_{n,1}\searrow x_{\tD}^-
\rmand
\txi_{n,n}\nearrow x_{\tD}^+.
\]
By using \cref{txi:b<0:10,tD:b<0:10}, the above limits can be rewritten as
\[
-\xi_{n,n}\searrow -x_\Delta^+
\rmand
-\xi_{n,1}\nearrow -x_\Delta^-,
\]
that is the desired limits $\xi_{n,1}\searrow x_\Delta^-$ and $\xi_{n,n}\nearrow x_\Delta^+$.

\medskip
\noindent{\bf (ii)}
When $a>2$ and $|b|<b_0$, we have $-b<b_0$.
By \cref{thm:10:normalized} (ii), we have 
\[
\txi_{n,1}\searrow x_{\tg}^-
\rmand
\txi_{n,n}\nearrow x_{\tg}^+.
\]
By using \cref{txi:b<0:10,tg:b<0:10}, the above limits can be rewritten as
\[
-\xi_{n,n}\searrow -x_g^+
\rmand
-\xi_{n,1}\nearrow -x_g^-,
\]
that is the desired limits $\xi_{n,1}\searrow x_g^-$ and $\xi_{n,n}\nearrow x_g^+$.

\medskip
\noindent{\bf (iii)}
Otherwise, we have either $|b|>b_0$ or ``$|b|=b_0$ and $a>2$''.
Since $b<0$, we only need to show (iii)-1. In this case, we have either $-b>b_0$ or ``$-b=b_0$ and $a>2$''.
By \cref{thm:10:normalized} (iii), we have 
\[
\txi_{n,1}\searrow x_{\tD}^-
\rmand
\txi_{n,n}\nearrow x_{\tg}^+.
\]
By using \cref{txi:b<0:10,tD:b<0:10,tg:b<0:10}, the above limits can be rewritten as
\[
-\xi_{n,n}\searrow -x_\Delta^+
\rmand
-\xi_{n,1}\nearrow -x_g^-,
\]
that is the desired limits $\xi_{n,1}\searrow x_g^-$ and $\xi_{n,n}\nearrow x_\Delta^+$.
This completes the proof of \cref{prop:b<0:10}.

\end{document}